\CompileMatrices \usepackage{amscd}
\def\bysame{\leavevmode\hbox
  to3em{\hrulefill}\thinspace} 
\theoremstyle{plain} \newtheorem{theorem}{Theorem}[section]
\newtheorem{lemma}{Lemma}[section]
\newtheorem{corollary}{Corollary}[section]
\newtheorem{proposition}{Proposition}[section]
\theoremstyle{definition}
\newtheorem{definition}{Definition}[section]
\newtheorem{definition-theorem}{Definition-Theorem}[section]
 \theoremstyle{remark}
\newtheorem{remark}{Remark}[section]
\numberwithin{equation}{section} \setcounter{tocdepth}{1}
\newcommand{\tr}{\operatorname{tr}}
\newcommand{\pr}{p}
\newcommand{\Id}{\operatorname{Id}} 
\newcommand{\End}{\operatorname{End}}
\newcommand{\Ker}{\operatorname{Ker}}
\newcommand{\ad}{\operatorname{ad}}
\newcommand{\Aut}{\operatorname{Aut}}
\newcommand{\dbar}{\bar{\partial}}  
\newcommand{\CC}{{\mathbb C}} 
 \newcommand{\RR}{{\mathbb R}}
\newcommand{\ZZ}{{\mathbb Z}} \newcommand{\rk}{\operatorname{rk}}
 \renewcommand{\(}{\left(}
  \renewcommand{\)}{\right)} 
\newcommand{\surj}{\to\kern-1.8ex\to}
\newcommand{\lra}[1]{\stackrel{#1}{\longrightarrow}}
 \newcommand{\cA}{\mathcal{A}}
 \newcommand{\cM}{\mathcal{M}}
\newcommand{\cP}{\mathcal{P}}
 \newcommand{\cG}{\mathcal{G}}
\newcommand{\cL}{\mathcal{L}}
\newcommand{\Lie}{\operatorname{Lie}}
\newcommand{\cX}{{\widetilde{\mathcal{G}}}}
\def\om{\omega} \def\Om{\Omega}
\def\del{\partial} \def\delb{\overline{\partial}}
\def\Lie{\mathrm{Lie}} \def\Diff{\mathrm{Diff}}
  \def\Id{\mathrm{Id}}
\def\cA{\mathcal{A}}  
\def\ctG{\widetilde{\mathcal{G}}}
\def\cG{\mathcal{G}}  \def\cP{\mathcal{P}}
\def\del{\partial} \def\delb{\overline\partial}
\newcommand{\st}{\;|\;}
\newcommand{\ot}{\otimes}
\newcommand{\cCi}{C^\infty}
 \newcommand{\fo}{\mathfrak{o}}
 \newcommand{\fso}{\mathfrak{so}}
\newcommand{\la}{\langle} \newcommand{\ra}{\rangle}
 \newcommand{\Cl}{\mathrm{Cl}}
 \newcommand{\SU}{\mathrm{SU}}
\newcommand{\U}{\mathrm{U}} \newcommand{\GL}{\mathrm{GL}}
\newcommand{\SL}{\mathrm{SL}}
 \newcommand{\OO}{\mathrm{O}}
\newcommand{\chf}{\check{f}} \newcommand{\chg}{\check{g}}
\begin{document}

\title[Moduli, Strominger and Killing spinors in generalized geometry]{Infinitesimal moduli for the Strominger system and Killing spinors in generalized geometry}
\thanks{This project has received funding from the European Union's Horizon 2020 research and innovation programme under the Marie Sklodowska-Curie grant agreement No 655162. This work is partially supported by an ESF - Short Visit Grant 5717 within the framework of the ITGP network. MGF is supported by a Marie Sklodowska-Curie grant and was initially supported by ICMAT Severo Ochoa project SEV-2011-0087 and by the \'Ecole Polytechnique F\'ed\'eral de Lausanne. RR is supported by IMPA and was initially supported by QGM through its partnership with the Mathematical Institute of Oxford. CT is partially supported by Agence Nationale de la Recherche - ANR project EMARKS}

\author[M. Garcia-Fernandez]{Mario Garcia-Fernandez}
\author[R. Rubio]{Roberto Rubio}
\author[C. Tipler]{Carl Tipler}

\address{Instituto de Ciencias Matem\'aticas (CSIC-UAM-UC3M-UCM)\\
  Nicol\'as Cabrera 13--15, Cantoblanco\\ 28049 Madrid, Spain}
\email{mario.garcia@icmat.es} \address{IMPA, Estrada Dona Castorina,
  110, Jardim Bot\^anico, Rio de Janeiro - RJ, 22460-320, Brazil}
\email{rubio@impa.br} \address{LMBA, UMR CNRS 6205; D\'epartement de
  Math\'ematiques, Universit\'e de Bretagne Occidentale, 6, avenue
  Victor Le Gorgeu, 29238 Brest Cedex 3 France}
\email{carl.tipler@univ-brest.fr}

\subjclass[2010]{58D27, 53D18} 


\maketitle

\begin{abstract}
  We construct the space of infinitesimal variations for the
  Strominger system and an obstruction space to integrability, using
  elliptic operator theory.  We initiate the study of the geometry of
  the moduli space, describing the infinitesimal structure of a
  natural foliation on this space. The associated leaves are related
  to generalized geometry and correspond to moduli spaces of solutions
  of suitable Killing spinor equations on a Courant algebroid. As an
  application, we propose a unifying framework for metrics with
  holonomy $\SU(3)$ and solutions of the Strominger system.
\end{abstract}

\tableofcontents

\section{Introduction}
\label{sec:intro}

The Strominger system couples a pair of Hermite--Yang--Mills
connections with a conformally balanced hermitian metric on a
Calabi--Yau threefold $X$, by means of an equation for
$4$-forms---known as \emph{the Bianchi identity}. Although originated
in string theory \cite{HullTurin,Strom}, its mathematical study was proposed by
Yau \cite{Yau1} as a natural generalization of the Calabi problem
\cite{Calabi,Yau0}, in relation to moduli spaces of Calabi-Yau
threefolds which are not necessarily K\"ahlerian.

Pioneered by Fu, Li and Yau \cite{FuYau,LiYau}, the existence problem
for the Strominger system has been an active area of research in
mathematics in the last ten years (see
\cite{AGF1,FeiYau,FIUVa,Fu,FuLiYau,FTY,TsengYau} and references
therein).  There is an important conjecture by Yau \cite{Yau2}, which
states that any stable holomorphic vector bundle $V$ over a
homologically balanced Calabi--Yau threefold $X$ \cite{Michel} with
$c_2(V) = c_2(X)$ admits a solution of the Strominger system.  This
conjecture is widely open, the main difficulties being its
non-K\"ahler nature and the lack of understanding of the geometry of
the equations.

A problem closely related to Yau's conjecture is the construction of a
moduli space of solutions of the Strominger system. This moduli
problem remained almost unexplored for a long time, despite its
interest in string theory, where it describes the most basic pieces
(scalar massless fields) of the four-dimensional 
theory induced by a 
heterotic string compactification. Indeed, only a few references that
tackle the preliminary question of constructing the tangent space at a
given solution can be found in the physics literature
\cite{AGS,BeckerTseng,BeckerTsengYau,CyLa,MelSha,OssaSvanes}.  This
first step turns out to be rather challenging, and a complete answer
has been so far elusive.

The prime motivations for this work are the construction of the moduli
space of solutions of the Strominger system and its interrelation with
Yau's conjecture. In this paper we make a contribution to the first
problem, constructing the space of infinitesimal variations of a
solution and an obstruction space to integrability. We initiate the
study of the geometry of the moduli space, describing the
infinitesimal structure of a natural foliation, whose leaves are
intimately related to generalized geometry \cite{Hit1}. By
investigating the tangent to a leaf, we give an interpretation of the
leaves as moduli spaces of solutions of suitable Killing spinor
equations on a Courant algebroid. This last tangent space arises
naturally as a quotient of a bigger finite-dimensional vector space by
the second de Rham cohomology group of $X$. Our construction provides
a unifying framework for metrics with holonomy $\SU(3)$ and solutions
of the Strominger system, that we expect will have future applications
to Yau's conjecture. To explain our results, let us first recall the
definition of the equations.


\subsection*{Background}

Let $(X,\Omega)$ be a Calabi-Yau threefold, that is, a complex
manifold of dimension three endowed with a nowhere vanishing
holomorphic section of the canonical bundle $\Omega \in
H^0(X,\Lambda^{3,0}T^*)$. We do not assume that $X$ is
K\"ahlerian. Let $P_K$ be a principal bundle over $X$ with compact
structure group $K$.  The Strominger system
is 
\begin{equation}\label{eq:stromintro}
  \begin{split}
    F^{0,2}=0, \ \ \  F\wedge \omega^2&=0,\\
    R^{0,2}=0, \ \ \  R\wedge \omega^2&=0,\\
    d^* \omega - i(\dbar - \partial)\log \|\Omega\|_\omega & = 0,\\
    dd^c \omega - \alpha'(\tr R\wedge R - \tr F\wedge F)&=0,
  \end{split}
\end{equation}
with unknowns given by a hermitian metric $g$ on $X$ with fundamental
form $\omega$, a connection $A$ on $P_K$ and a metric connection
$\nabla$ on the (smooth) tangent bundle of $X$. Here, $\alpha'$ is a
positive real constant and $F$ and $R$ denote, respectively, the
curvature $2$-forms of $A$ and $\nabla$. The notation $-\tr$ refers to
the Killing form on the Lie algebra of $K$.  In this paper we impose
that $\nabla$ is unitary with respect to the hermitian structure
$(\Omega,\omega)$. 

The Strominger system comprises, essentially, three conditions---the
first two well understood in the literature. First, the equation in
the third line, often known as the \emph{dilatino equation}, is
strongly reminiscent of the complex Monge-Amp\`ere equation on a
K\"ahler manifold (see e.g. \cite{FWW}). It restricts the holonomy of the Bismut connection
$$
\nabla^+ = \nabla^g - \frac{1}{2}g^{-1}(d^c\omega)
$$
to $\SU(3)$, where $\nabla^g$ denotes the Levi-Civita connection of
the metric $g$. Furthermore, as observed by Li and Yau \cite{LiYau},
the dilatino equation is equivalent to the condition
\begin{equation}\label{eq:confbalanced}
  d(\|\Omega\|_\omega \omega^2) = 0,
\end{equation}
which implies that $\omega' = \|\Omega\|_\omega^{1/2}\omega$ is the
fundamental form of a balanced metric, namely $d^*\omega' = 0$, and
hence $g$ is \emph{conformally balanced}. A classical result of
Michelsohn \cite{Michel} characterizes the existence of balanced
metrics on a complex manifold using a condition on the
homology---formulated in terms of currents and known as the
\emph{homologically balanced condition}.

Second, the first two lines in \eqref{eq:stromintro} correspond to the
Hermite--Yang--Mills condition for the connections $A$ and $\nabla$
with respect to the conformally balanced metric $g$. There is a
well-known theory for Hermite--Yang--Mills connections on a hermitian
manifold $(X,g)$ \cite{lt}, which ranges from existence results to the
construction of the moduli space (which turns out to be K\"ahler when
$g$ is conformally balanced). The main result of the theory is
Li--Yau's theorem \cite{LiYauHYM}, which characterizes the existence of
solutions in terms of (slope) stability of the bundle, generalizing
the Donaldson--Uhlenbeck--Yau theorem in K\"ahler geometry
\cite{Don,UY}.

Finally, the most demanding and less understood condition of the
system is the \emph{Bianchi identity}
\begin{equation}\label{eq:bianchiintro}
  dd^c\omega = \alpha'(\operatorname{tr} R \wedge R - \operatorname{tr} F \wedge F),
\end{equation}
which is ultimately responsible for the non-K\"ahler nature of the
problem. The non-vanishing of the \emph{Pontryagin term}
$\operatorname{tr} R \wedge R - \operatorname{tr} F \wedge F$ prevents
the hermitian form $\omega$ to be closed and hence allows the complex
manifold $X$ to be non-K\"ahlerian. This subtle condition, which
arises in the quantization of 
the physical theory, was studied by Freed \cite{Freed} in the context
of index theory for Dirac operators and more recently by Sati--Schreiber--Stasheff from the point of view of \emph{differential string structures} \cite{SSS}. Despite these important topological
insights, we have an almost total lack of
understanding of this last equation from an analytical point of view.

\subsection*{Main results}

In this work we add to the understanding of the moduli problem for the
Strominger system. The first contribution of this work is a complete
and direct construction of the vector space of infinitesimal
variations of a given solution---the infinitesimal moduli space---
using an elliptic complex $S^*$.

\begin{theorem}
  \label{th:intro1}
  The space of infinitesimal deformations of solutions of the
  Strominger system is given by the first cohomology group $H^1(S^*)$
  of an elliptic complex of multi-degree differential operators
  $S^*$. This complex admits a natural extension $\tilde S^*$, and the space of obstructions is defined as $H^2(\tilde S^*)$.
\end{theorem}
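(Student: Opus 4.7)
The plan is to package the infinitesimal deformation theory of the Strominger system into a complex $S^0 \to S^1 \to S^2$ of multi-degree differential operators, using Douglis--Nirenberg ellipticity to extract finite-dimensional cohomology. I would take $S^0$ to be the Lie algebra of the natural symmetry group, namely the semidirect product of the diffeomorphisms of $X$ preserving $\Omega$ with the gauge groups of $P_K$ and of the (smooth) tangent bundle acting unitarily on $T^{1,0}X$; $S^1$ would be the space of infinitesimal variations of a field configuration, consisting of a Beltrami differential $\varphi \in \Omega^{0,1}(T^{1,0}X)$ subject to $\dbar(\Omega \lrcorner \varphi) = 0$, a real two-form $\dot\omega$, and adjoint-valued one-forms $\dot A, \dot\nabla$; and $S^2$ would collect the linearizations of the four groups of equations in \eqref{eq:stromintro}. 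The map $d_0 \colon S^0 \to S^1$ is the infinitesimal action of the symmetry group, $d_1 \colon S^1 \to S^2$ is the linearization of the Strominger system at a solution, and gauge invariance of the equations gives $d_1 \circ d_0 = 0$. By construction $H^1(S^*) = \ker d_1 / \Im d_0$ is the tangent space to the moduli of solutions modulo symmetries.

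For ellipticity, the components of $S^0, S^1, S^2$ must be weighted to reflect the different differential orders present: first-order for the linearized equations $F^{0,2} = R^{0,2} = 0$, for Hermite--Yang--Mills, and for the dilatino equation; and second-order for the linearized Bianchi identity $dd^c \dot\omega - \alpha'(\tr \dot R \wedge R + \tr R \wedge \dot R - \tr \dot F \wedge F - \tr F \wedge \dot F) = 0$. With these weights, symbol exactness reduces component-by-component to the Koszul complex of $\dbar$ twisted by suitable holomorphic bundles, a standard computation that also yields finite-dimensionality of $H^1(S^*)$.

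For the obstruction theory, I would enlarge $S^*$ to a complex $\tilde S^*$ by adjoining a further term encoding the differential identities satisfied by the linearized equations, such as the second Bianchi identity $\nabla R = 0$ for variations of curvature and the compatibility of $\dot J$ with $\dot\omega$, yielding an operator $\tilde d_2$ with $\tilde d_2 \circ d_1 = 0$. The Kuranishi map sending a class $[\dot\Phi] \in H^1(S^*)$ to the second-order quadratic obstruction then naturally lands in $H^2(\tilde S^*)$, defining the obstruction space. The main technical obstacle is the Bianchi identity \eqref{eq:bianchiintro}: it is a four-form equation of higher differential order than the others, its linearization couples all of $\dot\omega, \dot A, \dot\nabla$ through the curvature terms, and it is not of the form ``$\dbar(\text{something}) = 0$'', which is precisely what forces the multi-degree elliptic setup and the extension to $\tilde S^*$ required to house the full obstruction class.
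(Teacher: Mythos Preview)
Your overall architecture is right---a three-term complex of symmetries, variations, and linearized equations, with Douglis--Nirenberg weights to accommodate the second-order Bianchi component---but several structural choices diverge from the paper and at least one step is a genuine gap.

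First, on the setup of $S^0$ and $S^1$. The paper does not take $S^0$ to be the Lie algebra of a group: because $\nabla$ is required to be $(J_\Omega,\omega)$-unitary, the relevant symmetries form a Lie \emph{groupoid} over the parameter space, with source fibre $\Aut P_K \times \cG_{\U(3)}$ depending on the point. Correspondingly, $S^1$ is not the naive space of $(\varphi,\dot\omega,\dot A,\dot\nabla)$: the compatibility conditions $\nabla\dot J+[\dot\nabla,J]=0$ and $\nabla\dot\omega-\omega^{\dot\nabla}=0$ force the non-skew-hermitian part of $\dot\nabla$ to be determined by $(\dot J,\dot\omega)$, and the $(0,2)$-part of $\dot\omega$ is determined by $\dot J$. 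The resulting identification $T_0\cP\cong \Omega^{3,0}\oplus\Omega^{2,1}\oplus\Omega^1(\ad P_h)\oplus\Omega^{1,1}_\RR$ is a first-order differential operator, not a bundle isomorphism, and this matters for the symbol computation. Your choice of $S^1$, with an unconstrained real $\dot\omega$ and independent $\dot\nabla$, would not yield an elliptic complex.

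Second, and more seriously, your ellipticity argument is a gap. You assert that symbol exactness ``reduces component-by-component to the Koszul complex of $\dbar$ twisted by suitable holomorphic bundles''. It does not. The symbol sequence mixes the linearized conformally balanced equation $\sigma_{\mathbf{L}_4}$ and the linearized Bianchi identity $\sigma_{\mathbf{L}_3}$ in an essential way: one must combine $v\wedge Jv\wedge\sigma_{\mathbf{L}_4}=0$ with $J\sigma_{\mathbf{L}_3}=0$ to first kill $\delta(\|\Omega\|_\omega)$, then use explicit pointwise linear algebra in a frame adapted to $\omega$ to show $\dot\omega=v\wedge\iota_V\omega$ (see the proof of Proposition~\ref{prop:elliptic}). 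No twisted Dolbeault complex produces this; the argument is genuinely coupled across the real components.

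Finally, your extension $\tilde S^*$ is schematic where the paper is specific. The extension is built from Goto's elliptic complex for Calabi--Yau deformations and Kim's complex for Hermite--Yang--Mills, and the nontrivial point is that $d\circ\mathbf{L}_3^{2,2}\neq 0$ in general: one must replace $\mathbf{L}_3$ by its $(2,2)$-part and introduce a correction operator $\widetilde{\mathbf{d}}$ governed by the identity for $\mathbf{L}_3^{1,3}$ in Lemma~\ref{lem:S13}. Your suggestion to adjoin ``the second Bianchi identity $\nabla R=0$'' is not what is needed and would not close the complex.
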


To clarify the exposition, we first undertake the construction of the
complex for a toy model in Section \ref{sec:infmoduliabelian}. For
this, we introduce an abelian version of the equations
\eqref{eq:stromintro} depending on a real
parameter. 
The analysis in the abelian case will show that the combination of the
Bianchi identity with the conformally balanced equation
\eqref{eq:confbalanced} is well-behaved at the level of symbols.

In Section \ref{sec:infmoduligeneral} we construct the elliptic
complex of differential operators $S^*$ and identify its first
cohomology
$$
H^1(S^*)
$$
with the infinitesimal moduli of solutions of the Strominger
system. Some of the difficulties that arise in the construction of $S^*$ come from the symmetries of the system, which
turn out to have a Lie groupoid structure due to the compatibility of
the connection $\nabla$ with the metric $g$.

In Section \ref{sec:anomalyflux} we investigate the geometry on the
moduli space of solutions of the Strominger system $\mathcal{M}$
derived from the Bianchi identity.
This moduli
space 
is endowed with a canonical $H^3(X,\RR)$-valued closed $1$-form
$$
\delta \in \Omega^1(\mathcal{M},H^3(X,\RR))
$$
which is constructed via the variation of
\eqref{eq:bianchiintro}. 
The kernel of $\delta$ defines an integrable distribution on the
tangent bundle of $\mathcal{M}$ and hence a foliation on the moduli
space. A striking fact about this foliation is that its leaves can be
understood by using Hitchin's theory of generalized geometry
\cite{Hit1}.  The aim of Section \ref{sec:anomalyflux} is to give a
rigorous account of the infinitesimal version of this picture.  The
construction of a differentiable structure on $\mathcal{M}$ and the local structure of the foliation will be addressed in future work.

Neglecting obstructions to integrability, the tangent to a leaf at a
point is defined by an exact sequence
\begin{equation*}
  \xymatrix{
    0 \ar[r] & H^1(\mathring{S}^*) \ar[r] & H^1(S^*) \ar[r]^{\delta} & H^3(X,\RR). \\
  }
\end{equation*}
As the notation suggests, $H^1(\mathring{S}^*)$ is the cohomology of a
complex which, surprisingly, needs 
generalized geometry 
for its rigorous definition. In this new framework
$H^1(\mathring{S}^*)$ has a natural interpretation, as variations of a
suitable generalized metric modulo generalized diffeomorphisms. A
special feature of $H^1(\mathring{S}^*)$ is that it cannot be
constructed by standard elliptic operator theory, as the space of
generalized vector fields cannot be identified with the space of
global sections of a vector bundle (similarly as the space of
symplectic vector fields on a symplectic manifold). Motivated by this
problem, we construct a refinement of $H^1(\mathring{S}^*)$, which
fits into the following exact diagram
\begin{equation}\label{eq:SQDiagram2}
  \xymatrix{
    & 0  \ar[d]   &  &\\
    & H^2(X,\RR)  \ar[d]  &  &\\
    & H^1(\widehat{S}^*)  \ar[d]  & &\\
    0 \ar[r] & H^1(\mathring{S}^*) \ar[d] \ar[r] & H^1(S^*) \ar[r]^{\delta} & H^3(X,\RR). \\
    & 0 &  &  &
  }
\end{equation}
Unlike $H^1(\mathring{S}^*)$, the refined vector space
$H^1(\widehat{S}^*)$ is constructed by considering \emph{inner
  symmetries} of a smooth, transitive, Courant algebroid, and is
defined as the first cohomology of an elliptic complex of degree $1$
differential operators. Motivation for the previous construction comes
from two basic principles in the physics of the heterotic string,
given by the \emph{Green-Schwarz mechanism} \cite{GreenSchwarz}, and
the \emph{flux quantization condition}. We should stress that the
space $H^1(\widehat{S}^*)$ is the one that comes closer to the physics
of the heterotic string.

Section \ref{sec:stgen} gives a geometric interpretation of the leaves
of the foliation in the moduli space, showing the strong connection
between the Strominger system and generalized geometry. For this, we
define suitable Killing spinor equations
\begin{equation}\label{eq:Killingintro}
  D^\phi_+ \eta = 0, \qquad \slashed D_-^\phi \eta = 0,
\end{equation}
for an admissible metric on a smooth transitive Courant algebroid and
prove the following result.

\begin{theorem}\label{th:charStrom}
  The Strominger system \eqref{eq:stromintro} is equivalent to the
  Killing spinor equations \eqref{eq:Killingintro}, on a transitive
  Courant algebroid obtained from reduction. As a consequence,
  \eqref{eq:stromintro} is a natural system of equations in
  generalized geometry, that is, solutions are exchanged under
  generalized diffeomorphisms.
\end{theorem}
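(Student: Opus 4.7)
The plan is to pass from a Strominger datum $(g,A,\nabla)$ to a triple $(V,\mathbf{G},\eta)$ consisting of a transitive Courant algebroid $V$ over $X$, an admissible generalized metric $\mathbf{G}$, and a distinguished pure spinor $\eta$, in such a way that the two Killing-spinor equations \eqref{eq:Killingintro} recover precisely the four lines of \eqref{eq:stromintro}. The naturality under generalized diffeomorphisms will then be a formal consequence of the invariance of $D^\phi_+$ and $\slashed{D}^\phi_-$ under symmetries of Courant algebroids.

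The first step is to construct the ambient Courant algebroid by reduction. Starting from the principal bundle $P = P_K \times_X P_\nabla$, where $P_\nabla$ denotes the unitary frame bundle of $(TX,g)$ associated to $\nabla$, one forms the standard exact Courant algebroid on the total space of $P$ twisted by $H = d^c\omega$ and reduces by the action of $K\times \U(3)$ using the connection $A\oplus\nabla$. The outcome is a transitive Courant algebroid $V$ over $X$ whose \v{S}evera class encodes both $H$ and the chosen splittings. A direct check shows that the cocycle condition for $V$ to be a bona fide Courant algebroid is exactly the Bianchi identity \eqref{eq:bianchiintro}: the identity $dH = \alpha'(\operatorname{tr} R\wedge R - \operatorname{tr} F\wedge F)$ is what makes this reduction well-defined, so one of the four lines of \eqref{eq:stromintro} is consumed purely at the level of the construction of $V$.

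The second step is to build the remaining structure intrinsically. The Riemannian metric $g$ together with the isotropic splittings induced by $A$ and $\nabla$ produce the generalized metric $\mathbf{G}$; admissibility is algebraic and holds by construction. The holomorphic volume form $\Omega$, combined with a dilaton $\phi$ read off from $\|\Omega\|_\omega$, gives the pure spinor $\eta=e^{-\phi}\Omega$, which determines a generalized $\mathrm{SU}(3)$-structure on $V$. At this stage one has defined the Dirac-type operators $D^\phi_+$ and $\slashed{D}^\phi_-$ associated to $\mathbf{G}$ and $\phi$ on the spin bundle of $V$.

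The core computation is to expand $D^\phi_+\eta$ and $\slashed{D}^\phi_-\eta$ into their bidegree components using the characterization of the Bismut connection $\nabla^+$ in the generalized-geometric setting. The plan is to show that $D^\phi_+\eta=0$ splits into the pair of equations $F^{0,2}=R^{0,2}=0$ (together with the integrability of the complex structure on $X$, which is automatic from $\Omega$), plus the dilatino equation in the form $d(\|\Omega\|_\omega\omega^2)=0$ of \eqref{eq:confbalanced}; and that $\slashed{D}^\phi_-\eta=0$ reduces to the trace Hermite--Yang--Mills conditions $F\wedge\omega^2=0$ and $R\wedge\omega^2=0$, together with the same dilatino equation appearing compatibly. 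The main obstacle is this bookkeeping: one must fix once and for all the conventions for the Clifford action on the spinor bundle of a transitive Courant algebroid, relate the $\pm$ Dirac operators to $\nabla^\pm$ via a Lichnerowicz-type identity, and carefully track the contribution of the dilaton $\phi$ so that the $(0,2)$-, primitive $(1,1)$-, and trace parts match the four lines of \eqref{eq:stromintro} exactly.

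Finally, to conclude naturality one observes that $V$, $\mathbf{G}$, $\eta$, $D^\phi_\pm$ and the equations $D^\phi_+\eta=0$, $\slashed{D}^\phi_-\eta=0$ are all intrinsic to the Courant algebroid: any generalized diffeomorphism of $V$ automatically sends solutions to solutions. Combined with the reverse reading of the dictionary above, which recovers a Strominger datum $(g,A,\nabla)$ from a triple $(V,\mathbf{G},\eta)$ solving the Killing-spinor system on the reduced algebroid, this gives the claimed equivalence and hence the announced invariance of the Strominger system under generalized diffeomorphisms.
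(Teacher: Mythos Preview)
Your overall architecture---build the transitive Courant algebroid by reduction, note that the Bianchi identity is exactly the integrability of the bracket, then interpret the remaining Strominger equations as Killing spinor equations for a generalized metric, and read off naturality from the functoriality of $D^\phi_\pm$---is the right one and matches the paper. However, two concrete points in the execution do not line up with how the equations in \eqref{eq:Killingintro} are actually set up, and one key lemma is missing.

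First, the spinor $\eta$ is not a pure spinor of the form $e^{-\phi}\Omega$. The operators $D^\phi_+$ and $\slashed D^\phi_-$ act on sections of $S_+(V_-)$, the half-spinor bundle of the rank-six negative eigenbundle $V_-\subset E$, which via $\pi_{|V_-}$ is identified with the ordinary spin bundle of $(T,g)$. In the $\SU(3)$ model $S_+\cong\Lambda^{0,\mathrm{even}}$ the relevant spinor is $\eta=1$, a classical parallel spinor, not the differential form $\Omega$. The pure-spinor formalism on $\Lambda^\bullet T^*$ is a different Clifford module and is not the one on which \eqref{eq:Killingintro} is written; your ansatz $\eta=e^{-\phi}\Omega$ therefore does not plug into $D^\phi_+$ or $\slashed D^\phi_-$ as defined.

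Second, your attribution of which Strominger equation comes from which Killing equation is essentially reversed. A direct computation (this is Lemma~\ref{lem:charStrom} in the paper) gives
\[
D^\phi_+\eta \;=\; \nabla^-\eta \;-\; c(F\cdot\eta,\cdot),
\qquad
\slashed D^\phi_-\eta \;=\; -\,(H+2d\phi)\cdot\eta
\]
(the second identity assuming $D^\phi_+\eta=0$). So $D^\phi_+\eta=0$ encodes the gravitino equation $\nabla^-\eta=0$ together with $F\cdot\eta=0$, and the latter yields \emph{both} $F^{0,2}=0$ and $F\wedge\omega^2=0$ (and likewise for $R$). The dilatino/conformally balanced equation comes from $\slashed D^\phi_-\eta=0$, not from $D^\phi_+$. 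Note also that $D^\phi_+$ restricted to $V_-$ is actually independent of $\phi$, since the Weyl term only contributes to the pure $+++$ and $---$ pieces; this makes your proposed splitting impossible as stated.

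Finally, the bridge from the spinorial conditions $\nabla^-\eta=0$ and $(H+2d\phi)\cdot\eta=0$ to the complex-geometric statements (integrability of $J$, $H=d^c\omega$, $\phi=-\tfrac12\log\|\Omega\|_\omega+\mathrm{const}$, and $d^*\omega=d^c\log\|\Omega\|_\omega$) is not a matter of bidegree bookkeeping: it is the content of the Hull--Strominger theorem (Theorem~\ref{thm:Strom} here), which you should invoke explicitly. With these three corrections---the nature of $\eta$, the correct split of the two Killing equations, and the appeal to Hull--Strominger---your outline becomes the paper's proof.
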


This result builds on previous work of the first author in the
relation between generalized geometry and heterotic supergravity
\cite{GF}.  Theorem \ref{th:charStrom} gives a precise interpretation
of the vector space $H^1(\mathring{S}^*)$, as infinitesimal
deformations for solutions to the Killing spinor equations
\eqref{eq:Killingintro} modulo infinitesimal symmetries of the Courant
algebroid. As a consequence, a leaf of the foliation determined by
$\delta$ can be interpreted as a moduli space of solutions of these
equations.

The proof of Theorem \ref{th:charStrom} reveals a strong parallelism
with the theory of metrics with holonomy $\SU(3)$, once generalized
geometry enters into the game. The same equations, formulated instead
on an exact Courant algebroid, pin down precisely Riemannian metrics
with holonomy $\SU(3)$ on a six dimensional manifold.  Generalized
geometry provides a unifying framework for the theory of the
Strominger system and the well-established theory of Calabi--Yau
metrics, which we expect will have interesting applications in the
former. We will explore further this analogy in future work.

In the physics literature, de la Ossa and Svanes \cite{OssaSvanes} and
Anderson, Gray and Sharpe \cite{AGS} have recently proposed a close
approximation to the infinitesimal moduli for the Strominger system in
terms of the Dolbeault cohomology of a holomorphic double
extension---based on previous ideas by Melnikov and Sharpe
\cite{MelSha}. In a sequel to the present paper we will show that
their proposal admits a natural interpretation in generalized
geometry, and relates in a certain way to the infinitesimal moduli of
the Strominger system.  After Section
\ref{sec:stgen} 
was completed, we were informed by A. Coimbra that an alternative
formulation of the Strominger system using generalized geometry was
provided recently in the physics literature \cite{CoMiWa}. Our
approach has the benefit of making evident that the Strominger system
is invariant under generalized diffeomorphisms.

\textbf{Acknowledgments:} We thank Luis \'Alvarez-C\'onsul, Bjorn
Andreas, Ves\-tis\-lav Apostolov, Henrique Bursztyn, Ryushi Goto,
Marco Gualtieri, Nigel Hitchin, Laurent Meersseman, Xenia de la Ossa,
Dan Popovici, Brent Pym and Eirik Svanes for useful discussions. Part
of this work was undertaken while CT was visiting IMPA, UFRJ, CRM,
during visits of MGF and CT to CIRGET, and of RR to EPFL and ICMAT. We
would like to thank these very welcoming institutions for providing a
nice and stimulating working environment.

\section{Infinitesimal moduli: abelian
  case}\label{sec:infmoduliabelian}
\label{sec:infabel}
The aim of this section is to study a toy model for the construction
of the infinitesimal moduli space of the Strominger system, which
avoids the difficulties arising from the treatment of the unitary
connection on the tangent bundle and non-abelian groups. In
particular, we will consider deformations of a Calabi-Yau structure on
a compact, six dimensional, smooth manifold $M$, endowed with a
holomorphic line bundle. We do not require our complex manifolds to be
K\"ahlerian.

\subsection{The abelian equations}
\label{sec:abelianeq}
Let $M$ be a compact, oriented, six dimensional smooth manifold. Let
$L$ be a hermitian line bundle over $M$. We fix a non-zero real
constant $c$. We denote by $T$ the smooth tangent bundle of $M$ and
its complexification by $T_\CC$. Consider triples
$(\Omega,\theta,\omega)$ where $\Omega$ is a complex $3$-form such
that
\begin{equation}
  \label{eq:T01}
  T^{0,1}:=\lbrace V\in T_\CC \;\vert\; \iota_V \Om =0 \rbrace
\end{equation}
determines an almost complex structure $J_\Omega$ on $M$, $\theta$ is
a unitary connection on $L$, and $\omega$ is a $J_\Omega$-compatible
$2$-form, that is,
\begin{equation}\label{eq:Jomegacompatible}
  \omega(J_\Omega \cdot,J_\Omega \cdot) = \omega \quad \textrm{and} \quad \omega(\cdot,J_\Omega \cdot) > 0 
\end{equation}
is a Riemannian metric on $M$. We aim to construct a space of
infinitesimal deformations for solutions of the equations
\begin{equation}\label{eq:stromabelian}
  \begin{split}
    d\Om & =0, \ \ \ \ \ \ \ \ \ \ \ \ \ \ \ d(\vert\vert \Om \vert \vert_\om \om^2)  =  0,\\
    F_\theta^{0,2} & = 0, \ \ \ \ \ \ \ \ \ \ \ \ \ \ \ \ F_\theta \wedge \omega^2 = \lambda \omega^3,\\
    dd^c \omega - c(F_\theta\wedge F_\theta) & = 0,
  \end{split}
\end{equation}
where $\lambda \in i \RR$ is a constant that depends on the unitary
structure determined by $(\Omega,\omega)$ and the first Chern class of
$L$, as follows from the identity
\begin{equation}\label{eq:degree}
  \operatorname{deg} (L)  : = c_1(L)\cdot [\|\Omega\|_\omega \omega^2] = \frac{i\lambda}{2\pi}\int_M \|\Omega\|_\omega \omega^3.
\end{equation}
Our convention for the point-wise norm of a $(3,0)$-form $\varphi$
with respect to $\omega$ is
\begin{equation}\label{eq:norm3form}
  \|\varphi\|^2_\omega \omega^3 = 6i\varphi \wedge \overline{\varphi}. 
\end{equation} 
Recall that the integrability of $J_\Omega$ is equivalent to the
condition
$$
d \Omega = 0
$$
and hence, by the first two equations in \eqref{eq:stromabelian}, any
solution determines a Calabi-Yau threefold structure on $M$ endowed
with a holomorphic line bundle.

When $c_1(L) = 0$ and $c = -1$ the system \eqref{eq:stromabelian}
corresponds to the field equations of \emph{abelian heterotic
  supergravity} considered in \cite{MaSp} (note that in our notation
$F_\theta$ is a purely imaginary $2$-form).
The reason why we do not work directly with this case, which is the
situation that comes closer to the Strominger system, is the following
observation, that shall be compared with \cite[p. 55]{CHSW}.

\begin{proposition}

  Let $(\Omega,\theta,\omega)$ be a solution of
  \eqref{eq:stromabelian}. If $c_1(L) = 0$ and $(M,\Omega)$ is a $\partial \dbar$-manifold,
then $\theta$ is flat and $\omega$ has holonomy $\SU(3)$ (in particular, $\omega$ is K\"ahler Ricci-flat).
\end{proposition}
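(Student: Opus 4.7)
The plan is to show vanishing of $F_\theta$ and $d\omega$ in turn, and then to deduce the holonomy statement. The first ingredient is the vanishing of the Hermite--Einstein slope $\lambda$: since $c_1(L) = 0$, the curvature $F_\theta$ is $d$-exact, while by Li--Yau the dilatino equation is equivalent to $d(\|\Omega\|_\omega\,\omega^2) = 0$. Writing $\rho := \|\Omega\|_\omega$, Stokes' theorem gives $\int_M F_\theta \wedge \rho\,\omega^2 = 0$, while the Hermite--Einstein equation identifies this integral with $\lambda \int_M \rho\,\omega^3$. Since $\rho > 0$ everywhere, this forces $\lambda = 0$, and $F_\theta$ is $\omega$-primitive.

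The second step uses the $\partial\dbar$-lemma together with a maximum-principle argument on the associated balanced metric $\omega' = \rho^{1/2}\omega$ to force $F_\theta = 0$. As $F_\theta$ is a $d$-exact pure-imaginary $(1,1)$-form on a $\partial\dbar$-manifold, one may write $F_\theta = \partial\dbar w$ for some real function $w$; primitivity of $F_\theta$ then reads $\partial\dbar w \wedge (\omega')^2 = 0$, since $(\omega')^2 = \rho\,\omega^2$. Integration by parts yields
\begin{equation*}
\int_M i\,\partial w \wedge \dbar w \wedge (\omega')^2 \;=\; i\int_M \partial(w\,\dbar w)\wedge (\omega')^2 \;-\; i\int_M w\,\partial\dbar w \wedge (\omega')^2.
\end{equation*}
The first term vanishes by Stokes, since $\dbar(w\,\dbar w) = 0$ promotes $\partial(w\,\dbar w)$ to $d(w\,\dbar w)$ and $d(\omega')^2 = 0$ by balancedness. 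The second term vanishes by primitivity. The integrand on the left is pointwise nonnegative and proportional to $|\dbar w|^2\,(\omega')^3$, so $\dbar w \equiv 0$; as $w$ is real on a compact connected manifold, $w$ must be constant, and thus $F_\theta = 0$, so $\theta$ is flat.

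With $F_\theta = 0$, the Bianchi identity reduces to $dd^c\omega = 0$. On any Hermitian $3$-fold, integration by parts provides the universal identity
\begin{equation*}
\int_M dd^c\omega \wedge \omega \;=\; \int_M d^c\omega \wedge d\omega \;=\; 2i\int_M \dbar\omega \wedge \partial\omega \;=\; 2\int_M |\dbar\omega|^2\,\mathrm{vol} \;\geq\; 0,
\end{equation*}
with equality iff $\dbar\omega = 0$, equivalently $d\omega = 0$ by reality of $\omega$. Hence $\omega$ is K\"ahler. On a K\"ahler manifold $d^*\omega = 0$, so the dilatino equation yields $\dbar\log\rho = \partial\log\rho$; since $\log\rho$ is real, this forces $d\log\rho = 0$ and $\rho$ is constant. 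Writing $\Omega = f(z)\,dz^1\wedge dz^2\wedge dz^3$ locally with $f$ holomorphic, constancy of $\rho$ gives $\log\det g = \log|f|^2 + \mathrm{const}$, so the Ricci form $-i\partial\dbar\log\det g$ vanishes. Therefore $\omega$ is K\"ahler Ricci-flat and its holonomy is contained in $\SU(3)$.

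The main obstacle is the second step: both hypotheses $c_1(L) = 0$ (which triggers the $\partial\dbar$-lemma) and conformal balancedness (which supplies a balanced $\omega'$ for which the key integration by parts closes) are used essentially. Happily, the argument is insensitive to the sign of $c$, in contrast to the more direct strategy of pairing the Bianchi identity with $\omega$ and relying on a positivity argument that would require $c<0$.
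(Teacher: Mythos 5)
Your first two steps are correct, and in fact give a nice alternative to the paper's argument for the flatness of $\theta$: the paper instead conformally rescales the hermitian metric on $L$ to produce a flat Chern connection and invokes uniqueness of Hermite--Einstein metrics, whereas your direct integration by parts against the balanced form $(\omega')^2$ is self-contained. The problem is the third step. The identity
\begin{equation*}
\int_M d^c\omega\wedge d\omega \;=\; 2i\int_M \dbar\omega\wedge\partial\omega \;=\; 2\int_M |\dbar\omega|^2\,\mathrm{vol}
\end{equation*}
is false: for a $(1,2)$-form $\alpha$ on a threefold, $i\,\alpha\wedge\bar\alpha$ is the Hodge--Riemann pairing at top degree, which is \emph{positive} on the primitive part of $\alpha$ but \emph{negative} on the component of the form $\omega\wedge\beta$ with $\beta\in\Omega^{0,1}$. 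The non-primitive part of $\dbar\omega$ is governed by the Lee form $d^*\omega$, which by the dilatino equation equals $d^c\log\|\Omega\|_\omega$ and has no reason to vanish at this stage, so the integrand has no sign. Concretely, any non-K\"ahler SKT metric (these exist in abundance, e.g.\ on nilmanifolds in complex dimension three) satisfies $\int_M dd^c\omega\wedge\omega=0$ while $\dbar\omega\neq 0$, contradicting your claimed identity. So you cannot conclude K\"ahlerness from $dd^c\omega=0$ alone.

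This is precisely where the real content of the proposition lies, and where the conformally balanced condition must be used a second time. The paper's route is: $dd^c\omega=0$ together with the dilatino equation forces the Bismut connection to have holonomy in $\SU(3)$, and then Ivanov--Papadopoulos (Corollary~4.7 of \cite{IvanovPapadopoulos}) yields K\"ahlerness. The underlying computation (reproduced in the paper's proof of Theorem~\ref{th:SU3}) combines the Matsuo--Takahashi identity $\tfrac{1}{32}\Lambda^2(dd^c\omega)=2d^*(Jd^*\omega)+2|d^*\omega|^2-|d^c\omega|^2$ --- note the indefinite combination $2|d^*\omega|^2-|d^c\omega|^2$, which is exactly the indefiniteness your step misses --- with the Alexandrov--Ivanov relation between the Chern and Bismut Ricci forms and the vanishing of the Bismut Ricci form; only after adding these does one arrive at the nonnegative quantity $2|d^c\omega|^2+4|d^*\omega|^2=-2\Lambda_\omega dd^c\phi$ whose integral vanishes. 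You would need to import this argument (or the cited corollary) to close the gap; your final deduction of Ricci-flatness from K\"ahlerness is then fine.
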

\begin{proof}

As $c_1(L) = 0$,
$F_\theta$ is exact. By the $\partial \dbar$-lemma, $F_\theta
= \partial \dbar f$ for some smooth function $f$ on $X$. After
conformal re-scaling of the hermitian metric on $L$, we obtain a flat
Chern connection on the holomorphic line bundle
$(L,\dbar_\theta)$. Then, since $\theta$ is the Chern connection of a
hermitian-Einstein metric on $L$, $\theta$ has to be flat by
uniqueness, and hence it follows that $\omega$ is strong K\"ahler with torsion.
By the conformally balanced condition, the Bismut connection of $\omega$ has holonomy $\SU(3)$ (see~\cite[Section~II]{Strom}). Applying
now~\cite[Corollary~4.7]{IvanovPapadopoulos}, $\omega$ is K\"ahler and
the result follows.
\end{proof}

In this work we are mainly interested in non-K\"ahler solutions of
\eqref{eq:stromabelian}, and therefore we will assume that $c_1(L) \neq 0$. Non-K\"ahler
solutions of \eqref{eq:stromabelian} can be obtained using the
perturbative method in \cite{AGF1}, from holomorphic line bundles $L$
over a projective Calabi-Yau threefold $X$ with non-torsion $c_1(L)
\in H^2(X,\ZZ)$ satisfying
$$
c_1(L)^2 = 0.
$$

\begin{remark}
  It is perhaps more natural to consider the Hermite-Yang-Mills
  equations in \eqref{eq:stromabelian} with respect to the conformally
  balanced metric $\|\Omega\|^{\frac{1}{2}}_\omega\omega$. The
  linearization of these alternative abelian equations is, however,
  more involved and does not add to the understanding of the
  Strominger system.
\end{remark}

\subsection{Notation, parameter space and symmetries}
\label{sec:setup}
Let $\Om$ be a complex $3$-form on $M$ such that \eqref{eq:T01}
determines an almost complex structure $J$ on $M$ with
anti-holomorphic tangent bundle $T^{0,1}$.  Denote by $\Omega^k$
(resp. $\Omega^k_\CC$) the space of real (resp. complex) smooth $k$-forms
on $M$. We denote by $\Omega^{p,q}$ the space of $(p,q)$-forms on
$(M,J)$ and by $\Omega^{p,q}(W)$ the space of $(p,q)$-forms taking
values in a vector bundle $W$. Given a $q$-form $\sigma$ on $M$ (that
may take values in a vector bundle) and $\gamma\in\Om^{p}(T_\CC)$, we
define a $(q+p-1)$-form $\sigma^\gamma$ by
\begin{equation}\label{eq:sigmagamma}
  \sigma^\gamma= (\sigma(\gamma,\cdot))^{skw}
\end{equation}
where $skw$ denotes the skew-symmetric part of the tensor
$\sigma(\gamma,\cdot)$ satisfying :
$$
\forall (V_j)\in (T_\CC)^{q+p-1}\; , \;
\sigma(\gamma,\cdot)(V_1,\ldots,V_{q+p-1})=\sigma(\gamma(V_1,\cdots,V_p),V_{p+1},\ldots,V_{q+p-1}).
$$
As $\Om$ is nowhere vanishing, it induces an isomorphism on forms
\begin{equation}\label{eq:T0isom}
  \begin{array}{cccc}
    T_0: & \Om^{0,j}(T^{1,0}) & \rightarrow & \Om^{2,j} \\
    & \gamma & \mapsto & \Om^\gamma.
  \end{array}
\end{equation}
We will also denote by $T_0$ the induced isomorphism in cohomology
\cite{popo}.

To study the infinitesimal moduli of \eqref{eq:stromabelian} we define
the following parameter space.  Let $\cA$ be the space of unitary
connections on $L$ and $\Omega^3_0 \subset \Omega^3_\CC$ the
non-linear subspace of complex $3$-forms such that \eqref{eq:T01}
determines an almost complex structure on $M$.  We set
\begin{equation}\label{eq:spaceP}
  \cP:=\lbrace (\Om,\theta,\om)\;\vert\; \om \text{ is } J_\Om-\text{compatible} \rbrace \subset \Omega^3_0 \times \cA \times \Omega^2,
\end{equation}
where the compatibility condition is as in
\eqref{eq:Jomegacompatible}.

Let $\operatorname{Diff}_0$ be the identity component of the group of
diffeomorphisms of $M$.  Consider the group $\ctG$ of automorphisms of
$L$ that preserve the unitary bundle structure and cover an element in
$\operatorname{Diff}_0$.  This group preserves $\cP$, exchanging
solutions of \eqref{eq:stromabelian}. Denote by $\cG$ the gauge group
of the hermitian bundle $L$. Then we have an exact sequence
\cite{ACMM}
\begin{equation}
  \label{eq:sequencegroup}
  1\to \cG \lra{} \ctG \lra{\pr} \operatorname{Diff}_0 \to 1.
\end{equation}
Given a connection $\theta$ on $L$, we have a lift $
\Omega^0(T)\rightarrow \Lie(\ctG)$ and at the level of vector spaces
the corresponding Lie algebra sequence splits
\begin{equation}
  \label{eq:sequenceLie}
  0\to \Lie(\cG) \lra{} \Lie(\ctG) \lra{\pr} \Omega^0(T) \to 0.
\end{equation}

\subsection{Linearization and ellipticity}
In the sequel, we fix a solution $(\Omega,\theta,\omega)$ of
\eqref{eq:stromabelian}. The integrable almost complex structure
determined by $\Omega$ will be denoted by $J$ and the curvature of
$\theta$ will be denoted by $F$.
\label{sec:linear}
The complex encoding infinitesimal deformations of
\eqref{eq:stromabelian} is built from an elliptic complex
parameterizing infinitesimal deformations of the complex structure
that preserve the Calabi-Yau condition, that is, with trivial
canonical bundle, \cite{Got}
\begin{equation}
  \label{eq:CYinfinitesimal}
  0\rightarrow \Om^0(T^{1,0}) \lra{\cL_{\cdot}\Om} \Om^{3,0}\oplus \Om^{2,1}\lra{d} \Om^{3,1}\oplus\Om^{2,2}.
\end{equation}
Here, the first non-trivial arrow is defined by the infinitesimal
action of $\operatorname{Diff}_0$, given by the Lie derivative of
$\Om$ and we use the following characterization of the tangent space
of $\Omega^3_0$ at $\Omega$:
$$
T_{\Omega} \Omega^3_0 = \Om^{3,0}\oplus \Om^{2,1}.
$$
The variations in $\Om^{3,0}$ 
are just rescaling of the holomorphic $3$-form, while elements in $
\Om^{2,1}$ correspond via $T_0$ to deformations of the complex
structure. Given $\dot \Om\in \Om^{3,0}\oplus \Om^{2,1}$, we will
denote by $\dot J$ the associated variation of almost complex
structure given by
\begin{equation}\label{eq:dotJtensor10}
  \dot J^{1,0} = 2i T_0^{-1}(\dot\Om^{2,1}),
\end{equation}
with $\dot J^{1,0}=\frac{1}{2}(\dot J -i J\dot J).$

Let $T_0\cP$ be the tangent space of $\cP$ at the initial solution:
$$
T_0\cP =\lbrace (\dot\Om,\dot \theta, \dot \om)\in
\Om^{3,0}\oplus\Om^{2,1}\oplus \Om^1(i\RR)\oplus \Om^2 \;\vert\; J
\dot \om - \dot \om=-\om^{\dot J J} \rbrace.
$$
Note that the equations that define $T_0\cP$ can be equivalently
written as
\begin{equation}\label{eq:dotomega02}
  2i \dot \om^{0,2} = \om^{\dot J^{1,0}}
\end{equation}
and therefore there is a canonical isomorphism
\begin{equation}\label{eq:T0Psplit}
  T_0\cP \cong A^1:= \Om^{3,0}\oplus\Om^{2,1}\oplus \Om^1(i\RR)\oplus \Om^{1,1}_\RR,
\end{equation}
where $\Om^{1,1}_\RR \subset \Om^{1,1}$ denotes the space of real
$(1,1)$-forms on $(M,\Omega)$, given explicitly by
\begin{equation}\label{eq:isoomega}
  \dot \omega = \dot \omega^{1,1} + \frac{1}{2}\omega^{\dot J J}.
\end{equation}
Consider the linearization of the equations (\ref{eq:stromabelian})
$$
\mathbf{L} \colon T_0\cP \rightarrow A^2 := \Omega^{3,1} \oplus
\Omega^{2,2} \oplus \Omega^{0,2} \oplus \Omega^4 \oplus \Omega^5
\oplus \Omega^6(i\mathbb{R}).
$$
Using the vector space splitting \eqref{eq:sequenceLie} given by the
fixed connection $\theta$, the infinitesimal action

$$\mathbf{P}\colon \Lie(\ctG)\rightarrow T_0\cP$$
reads explicitly
\begin{equation}
  \label{eq:abelianinfinaction}
  \begin{array}{cccc}
    \mathbf{P} \colon &A^0 : = \Omega^0(T)\times  \Om^0(i\RR) & \rightarrow &  T_0\cP \\
    &  (V,r) & \mapsto & (d\iota_{V^{1,0}} \Om,  \iota_V F+dr, \cL_V \om)
  \end{array}
\end{equation}
with $V^{1,0}=\frac{1}{2}(V-iJV)$. We construct a complex of
differential operators
\begin{equation}
  \label{eq:abcomplex}
  (A^*) \qquad \qquad \qquad  A^0 \lra{\mathbf{P}}  A^1  \lra{\mathbf{L}} A^2,
\end{equation}
combining the operators $\mathbf{P}$ and $\mathbf{L}$ with the
isomorphism \eqref{eq:T0Psplit}. Our aim is to prove that this complex
is elliptic. Note that an arbitrary unitary connection on $L$ is of
the form $\theta + \dot \theta$ where $\dot \theta \in\Om^1(i\RR)$,
with corresponding curvature 
$F + d \dot \theta$, and also that $\lambda = \lambda(\Omega,\omega)$
is a function of the hermitian structure given by
\eqref{eq:degree}. Using this, we obtain the following expression for
the differential $\mathbf{L}$, regarded as an operator with domain
$T_0\cP$.
\begin{lemma}
  \label{lem:diff}
  The differential $\mathbf{L} = \oplus_{i=1}^5 \mathbf{L}_i$ is given
  by
  \begin{equation}
    \label{eq:diff}
    \begin{array}{ccc}
      \mathbf{L}_1(\dot \Om,\dot \theta,\dot\om) & = & d\dot \Om,  \\
      & & \\
      \mathbf{L}_2(\dot \Om,\dot \theta,\dot\om) & = & \dbar \dot \theta^{0,1} + \frac{i}{2}(F^{\dot J})^{0,2},\\
      & & \\
      \mathbf{L}_3(\dot \Om,\dot \theta,\dot\om) & = & d\(J d \dot \om - J (d\om)^{\dot J J} - 2c(\dot \theta\wedge F)\),\\
      & & \\
      \mathbf{L}_4(\dot \Om,\dot \theta,\dot\om) & = & d\left(2\vert\vert\Om\vert\vert_{\om}\dot\om\wedge\om+ 
        \delta(\vert\vert \Om\vert\vert_\om) \om^2\right), \\
      & & \\
      \mathbf{L}_5(\dot \Om,\dot \theta,\dot\om) & = & d\dot \theta\wedge \om^2 + (2 F - 3 \lambda \omega)\wedge \dot\om\wedge \om - \dot \lambda \omega^3.
    \end{array}
  \end{equation}
where $\dot J$ and $\dot \lambda$ are, respectively, the
  infinitesimal variations of almost-complex structure and constant
  $\lambda$ defined by $\dot \Om$ and $\dot \om$, and
  \begin{equation}\label{eq:dotnormOmega}
    \delta (\vert\vert \Om\vert\vert_\om) = \vert\vert \Om\vert\vert_\om^{-1}\operatorname{Re}(\dot\Omega,\Omega)_\omega - \frac{1}{2} \vert\vert \Om\vert\vert_\om \Lambda_{\omega}\dot \omega.
  \end{equation}

\end{lemma}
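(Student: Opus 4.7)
The plan is to linearize each equation of \eqref{eq:stromabelian} separately at the fixed solution $(\Omega,\theta,\omega)$ and to express the outcome in the variables $(\dot\Omega, \dot\theta, \dot\omega^{1,1}) \in A^1$ via the canonical isomorphism \eqref{eq:T0Psplit}. Throughout, the intermediate quantities $\dot J$ (determined from $\dot\Omega^{2,1}$ by \eqref{eq:dotJtensor10}), $\delta\|\Omega\|_\omega$, and $\dot\lambda$ arise and must be computed in terms of $(\dot\Omega,\dot\theta,\dot\omega^{1,1})$ using \eqref{eq:isoomega}.

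Equations $\mathbf{L}_1$, $\mathbf{L}_4$ and $\mathbf{L}_5$ follow from direct applications of the Leibniz rule. The first is immediate since $d\Omega = 0$ linearizes to $d\dot\Omega$. For $\mathbf{L}_5$, differentiating $F_\theta\wedge\omega^2 = \lambda\omega^3$ yields $d\dot\theta\wedge\omega^2 + 2F_\theta\wedge\dot\omega\wedge\omega - \dot\lambda\,\omega^3 - 3\lambda\dot\omega\wedge\omega^2$, and combining the terms involving $\dot\omega$ produces the factor $(2F_\theta - 3\lambda\omega)\wedge\dot\omega\wedge\omega$. For $\mathbf{L}_4$, differentiating $\|\Omega\|_\omega\omega^2$ gives $\delta\|\Omega\|_\omega\cdot\omega^2 + 2\|\Omega\|_\omega\dot\omega\wedge\omega$, and the formula \eqref{eq:dotnormOmega} for $\delta\|\Omega\|_\omega$ is obtained by differentiating the defining relation $\|\Omega\|_\omega^2\,\omega^3 = 6i\,\Omega\wedge\bar\Omega$, solving for $\delta\|\Omega\|_\omega$, and using the standard identity $3\dot\omega\wedge\omega^2 = \Lambda_\omega(\dot\omega)\,\omega^3$.

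Equations $\mathbf{L}_2$ and $\mathbf{L}_3$ are more delicate because of the $J$-dependence of the type decomposition and of $d^c$. Linearizing $F_\theta^{0,2}$ gives $(d\dot\theta)^{0,2} + (\delta\pi_J^{0,2})(F_\theta)$. The first summand equals $\bar\partial\dot\theta^{0,1}$, since only the $(0,1)$-part of a one-form contributes to the $(0,2)$-part of its exterior derivative. For the second, using that $F_\theta$ is of pure type $(1,1)$ at the initial solution (since it is purely imaginary and $F_\theta^{0,2} = 0$), a direct computation of the variation of $\pi_J^{0,2}$ via \eqref{eq:dotJtensor10} and the notation \eqref{eq:sigmagamma} produces the term $\tfrac{i}{2}(F_\theta^{\dot J})^{0,2}$. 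For $\mathbf{L}_3$, one uses the identity $d^c\omega = -J(d\omega)$ on forms, with $J$ acting as $i^{p-q}$ on $(p,q)$-types. Linearizing $dd^c\omega - cF_\theta\wedge F_\theta = 0$ and using $dF_\theta = 0$ to rewrite $2c\,d\dot\theta\wedge F_\theta = d(2c\,\dot\theta\wedge F_\theta)$, one obtains the claimed formula once the contribution of $\delta J$ acting on $d\omega$ is identified with the term $-J(d\omega)^{\dot J J}$ via \eqref{eq:sigmagamma}.

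The main technical obstacle is precisely the identification of $(\delta\pi_J^{0,2})(F_\theta)$ with $\tfrac{i}{2}(F_\theta^{\dot J})^{0,2}$ and of $-(\delta J)(d\omega)$ with $-J(d\omega)^{\dot J J}$, both of which rely on the fact that $\dot J$ anticommutes with $J$ and on careful bookkeeping with the notation \eqref{eq:sigmagamma}. Once these two variation formulas are established, the remainder of the derivation of \eqref{eq:diff} is a routine application of the Leibniz rule combined with the decomposition \eqref{eq:isoomega}.
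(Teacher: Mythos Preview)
Your proposal is correct and follows essentially the same approach as the paper's proof. The paper likewise treats $\mathbf{L}_1$, $\mathbf{L}_4$, $\mathbf{L}_5$ as straightforward Leibniz-rule computations, cites an external reference for $\mathbf{L}_2$, and concentrates on $\mathbf{L}_3$ by using the compatibility $d^c_{J_\Omega}\omega = J_\Omega d\omega$ and explicitly computing $\frac{d}{dt}\big|_{t=0}(J_t d\omega) = -J(d\omega)^{\dot J J}$, which is exactly the identification you flag as the main technical point. One small caveat: the paper's convention for the action of $J$ on $p$-forms is $Jb = b(J^{-1}\cdot,\ldots,J^{-1}\cdot)$, which yields $d^c\omega = +Jd\omega$ rather than your $d^c\omega = -Jd\omega$; be sure to use that convention so that the signs in your derivation match the stated formula for $\mathbf{L}_3$.
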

\begin{proof}
  The calculation of $\mathbf{L}_1$, $\mathbf{L}_4$ and $\mathbf{L}_5$
  is straightforward and for the calculation of $\mathbf{L}_2$ see
  e.g. \cite{GFT}. Formula \eqref{eq:dotnormOmega} follows from
  \eqref{eq:norm3form}.  To compute $\mathbf{L}_3$, note that
  $d^c=JdJ^{-1}$, where the action of $J$ on forms $b\in \Om^p$ is
 $$Jb=b(J^{-1}\cdot,\ldots,J^{-1}\cdot)=(-1)^p b(J\cdot,\ldots,J\cdot).$$
 Then, using the compatibility between $\omega$ and $J_\Omega$ in
 \eqref{eq:stromabelian} we have $d^c_{J_\Omega}\omega = J_\Omega d
 \omega$ and therefore
 \begin{equation}
   \nonumber
   \frac{d}{dt}_{|t= 0} (J_t dJ_t^{-1} \om_t) =  \frac{d}{dt}_{|t= 0} (J_t(d\om)) + Jd \dot \om.
 \end{equation}
 Lastly,
 \begin{eqnarray}
   \nonumber
   \frac{d}{dt}_{|t= 0} (J_t d \om)
   &=&
   -d\om(\dot J\cdot,J\cdot,J\cdot)-d\om( J\cdot,\dot J\cdot,J\cdot)-d\om(J\cdot,J\cdot,\dot J\cdot)
   \\
   \nonumber
   &=&
   d\om(\dot JJ^2\cdot,J\cdot,J\cdot)+d\om( J\cdot,\dot JJ^2\cdot,J\cdot)+d\om(J\cdot,J\cdot,\dot JJ^2\cdot)
   \\
   \nonumber
   &=&
   -J(d\om(\dot JJ\cdot,\cdot,\cdot)+d\om(\cdot,\dot JJ\cdot,\cdot)+d\om(\cdot,\cdot,\dot JJ\cdot))
   \\
   \nonumber
   &=&
   -J(d\om)^{\dot J J}.
   \nonumber
 \end{eqnarray}
\end{proof}

We note that the differential operator $\mathbf{L}$ is of first order
in the components $\mathbf{L}_1, \mathbf{L}_2, \mathbf{L}_4$ and
$\mathbf{L}_5$, but $\mathbf{L}_3$ has order two. We shall use the
generalized notion of ellipticity provided by Douglis and Nirenberg
\cite{DN}. For the general theory of linear multi-degree elliptic
differential operators we refer to \cite{LM1,LM2}. Here we recall the
basic definition. Let $E$ and $F$ be smooth real vector bundles over
the compact manifold $M$ with a direct sum decomposition
$$
E = \bigoplus_{j=1}^m E_j, \qquad F = \bigoplus_{i=1}^l F_i,
$$
and $\mathbf{L} \colon \Omega^0(E) \to \Omega^0(F)$ a linear
differential operator with corresponding decomposition $\mathbf{L} =
\oplus_{i,j}\mathbf{L}_{ij}$.

\begin{definition}\label{def:multidegree}
  Two tuples, $\mathbf{t} = (t_1, \ldots, t_m)$ and $\mathbf{s} =
  (s_1, \ldots, s_l)$ of non-negative integers form a system of orders
  for $\mathbf{L}$ if for each $1 \leq j \leq m$, $1 \leq i \leq l$ we
  have order $\mathbf{L}_{ij} \leq t_j - s_i$ (if $t_j - s_i < 0$ then
  $\mathbf{L}_{ij} = 0$). The $(\mathbf{t},\mathbf{s})$-principal part
  of $\mathbf{L}$ is obtained by replacing each $\mathbf{L}_{ij}$ by
  its terms which are exactly of order $t_j - s_i$, and the
  $(\mathbf{t},\mathbf{s})$-principal symbol of $\mathbf{L}$ is
  obtained by replacing each $\mathbf{L}_{ij}$ with its $t_j - s_i$
  principal symbol.
\end{definition}

We apply now this definition to our setup.

\begin{lemma} \label{lem:symbol} The leading symbol of $\mathbf{P}$ is
  given by the formula
  \begin{equation}
    \label{eq:symbP}
    \sigma_\mathbf{P}(v)(V,r)=(v\wedge \iota_{V} \Om,  v r, v\wedge\iota_V\om),
  \end{equation}
  where $v\in T^*\setminus M$. The tuples $\mathbf{t} = (2,2,2)$ and
  $\mathbf{s} = (1,1,0,1,1)$ form a system of orders for $\mathbf{L}$
  and the associated leading symbol is
  \begin{equation}
    \sigma_{\mathbf{L}}(v)(\dot \Om,\dot \theta,\dot\om)= (\sigma_{\mathbf{L}_1}(v),\sigma_{\mathbf{L}_2}(v),\sigma_{\mathbf{L}_3}(v),\sigma_{\mathbf{L}_4}(v),\sigma_{\mathbf{L}_5}(v)),
  \end{equation}
  with
  \begin{equation}
    \label{eq:symbolS}
    \begin{array}{ccc}
      \sigma_{\mathbf{L}_1}(v)(\dot \Om,\dot \theta,\dot\om) & = & v\wedge  \dot \Om,\\
      & & \\
      \sigma_{\mathbf{L}_2}(v)(\dot \Om,\dot \theta,\dot\om) & = & (v\wedge \dot \theta)^{(0,2)}, \\
      & & \\
      \sigma_{\mathbf{L}_3}(v)(\dot \Om,\dot \theta,\dot\om) & = &  v\wedge J (v\wedge \dot \om),\\
      & & \\
      \sigma_{\mathbf{L}_4}(v)(\dot \Om,\dot \theta,\dot\om) & = & v \wedge\left(2\vert\vert\Om\vert\vert_{\om}\dot\om\wedge\om+\delta(\vert\vert \Om\vert\vert_\om)\om^2\right), \\
      & & \\
      \sigma_{\mathbf{L}_5}(v)(\dot \Om,\dot \theta,\dot\om) & = & v \wedge \dot \theta\wedge \om^2.
    \end{array}
  \end{equation}
\end{lemma}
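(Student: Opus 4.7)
The approach is a direct computation, with the orders and principal parts of the building blocks extracted from elementary rules: the exterior derivative $d$ has symbol $v\wedge\cdot$; pointwise algebraic operations — including contractions $\iota_V$, the map $\sigma\mapsto\sigma^\gamma$ of \eqref{eq:sigmagamma}, the assignment $\dot\Omega\mapsto\dot J$ from \eqref{eq:dotJtensor10}, and the map $(\dot\Omega,\dot\omega)\mapsto\delta(\|\Omega\|_\omega)$ from \eqref{eq:dotnormOmega} — contribute no derivative and hence count as order zero; and Cartan's formula $\mathcal{L}_V=d\iota_V+\iota_V d$ has first-order part $d\iota_V$. These observations will reduce the lemma to a line-by-line verification.

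For $\mathbf{P}$, each of the three components is of first order, so its symbol is obtained by replacing every occurrence of $d$ by $v\wedge$. The map $V\mapsto d\iota_{V^{1,0}}\Omega$ yields $v\wedge\iota_{V^{1,0}}\Omega$, and since $\Omega\in\Omega^{3,0}$ the $(0,1)$-part of $V$ is annihilated, giving $v\wedge\iota_V\Omega$. In $(V,r)\mapsto\iota_V F+dr$ only $dr$ is first order, contributing $vr$, while $\iota_V F$ is algebraic and drops out. Finally the first-order part of $\mathcal{L}_V\omega$ is $d\iota_V\omega$, with symbol $v\wedge\iota_V\omega$. Collecting these produces \eqref{eq:symbP}.

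For $\mathbf{L}$, I would first verify entry by entry that $\operatorname{ord}(\mathbf{L}_{ij})\le t_j-s_i=2-s_i$. The only second-order contribution in the entire system is the composition $dJd\dot\omega$ inside $\mathbf{L}_3$, which forces $s_3=0$; all remaining $\mathbf{L}_i$ are at most first order, consistent with $s_i=1$. Once the system of orders is confirmed, the principal symbol in each entry is obtained by keeping only the summands that saturate the bound $t_j-s_i$ and replacing $d$ by $v\wedge$. Specifically, in $\mathbf{L}_1$ this gives $v\wedge\dot\Omega$. In $\mathbf{L}_2$, the term $(F^{\dot J})^{0,2}$ is zero-order in $\dot\Omega$, strictly below $t_1-s_2=1$, so only $\dbar\dot\theta^{0,1}$ survives, producing $(v\wedge\dot\theta)^{(0,2)}=v^{0,1}\wedge\dot\theta^{0,1}$. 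In $\mathbf{L}_3$ only $dJd\dot\omega$ saturates $t_3-s_3=2$ and contributes $v\wedge J(v\wedge\dot\omega)$, while the first-order terms $d(J(d\omega)^{\dot JJ})$ and $d(\dot\theta\wedge F)$ fall below the bound and drop out. In $\mathbf{L}_4$ both summands inside the outer $d$ are algebraic in $(\dot\Omega,\dot\omega)$ and both entries saturate their bound, so the full bracket persists after multiplication by $v\wedge$. In $\mathbf{L}_5$ only $d\dot\theta\wedge\omega^2$ saturates $t_2-s_5=1$, whereas the terms involving $\dot\omega$ and $\dot\lambda$ are algebraic and vanish in the principal symbol.

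The main delicacy is the bookkeeping: recognizing through \eqref{eq:dotJtensor10} and \eqref{eq:dotnormOmega} that the apparently nontrivial expressions $\dot J$ and $\delta(\|\Omega\|_\omega)$ are algebraic in the variations, and then correctly partitioning the subleading terms into those which are strictly below the Douglis--Nirenberg bound (and hence disappear from the symbol, as in $\mathbf{L}_2$, $\mathbf{L}_3$ and $\mathbf{L}_5$) and those which saturate it (and therefore remain, as in the two summands of $\mathbf{L}_4$). No further analytic difficulty is expected.
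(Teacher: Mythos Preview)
Your proposal is correct and is precisely the routine check the paper leaves to the reader; the paper only records the choice of decomposition $\Omega^0(E_1)=\Omega^{3,0}\oplus\Omega^{2,1}$, $\Omega^0(E_2)=\Omega^1(i\RR)$, $\Omega^0(E_3)=\Omega^{1,1}_\RR$ and otherwise omits the computation. Your line-by-line order count and identification of which summands saturate the Douglis--Nirenberg bound is exactly what is needed, and your handling of the algebraic terms (in particular $\dot J$, $\delta(\|\Omega\|_\omega)$, and the reconstruction of $\dot\omega$ from $\dot\omega^{1,1}$ via \eqref{eq:isoomega}) is correct.
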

\begin{proof}
  To apply Definition \ref{def:multidegree} we use the direct sum
  decomposition \eqref{eq:T0Psplit}, setting $\Omega^0(E_1) =
  \Om^{3,0}\oplus\Om^{2,1}$, $\Omega^0(E_2) = \Om^1(i\RR)$ and
  $\Omega^0(E_3) = \Om^{1,1}_\RR$. The proof is a routine check and is
  left to the reader.
\end{proof}

A linear multi-degree complex of differential operators is elliptic if
the induced sequence of symbols is exact, as in the standard case. The
usual Fredholm properties of elliptic complexes hold, and therefore
given any elliptic complex we have an associated finite dimensional
cohomology.

\begin{proposition}
  \label{prop:elliptic}
  The sequence (\ref{eq:abcomplex}) is an elliptic complex. The space of infinitesimal deformations of the system
  \eqref{eq:stromabelian} is defined as the finite-dimensional vector space
$$
H^1(A^*) = \frac{\operatorname{Ker} \mathbf{L}}{\operatorname{Im}
  \mathbf{P}}.
$$
\end{proposition}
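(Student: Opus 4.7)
The plan is to establish three things in turn: that $(A^*)$ is a complex, that its symbol sequence is exact in the sense of Douglis--Nirenberg, and that the first cohomology computes infinitesimal deformations of solutions of \eqref{eq:stromabelian} modulo the action of $\ctG$.

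First I would verify the complex property $\mathbf{L}\circ\mathbf{P}=0$. The operator $\mathbf{P}$ is the linearization of the action of $\ctG$ on $\cP$ at $(\Om,\theta,\om)$, obtained through the splitting \eqref{eq:sequenceLie}. Since \eqref{eq:stromabelian} is $\ctG$-invariant, the orbit of any solution consists of solutions, so differentiating the equations along orbit directions gives zero. This is cleaner than verifying the identity directly from \eqref{eq:diff} and \eqref{eq:abelianinfinaction}, and it also explains why the identity is independent of the fixed lift $\Om^0(T)\to \Lie(\ctG)$.

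Next, and this is the main technical step, I would verify ellipticity. Using the weights $\mathbf{t}=(2,2,2)$ for $A^1=\Om^{3,0}\oplus\Om^{2,1}\oplus\Om^1(i\RR)\oplus\Om^{1,1}_\RR$, one assigns weight $\mathbf{r}=(3,3)$ to $A^0=\Om^0(T)\oplus\Om^0(i\RR)$ so that $\mathbf{P}$ has order one with respect to $(\mathbf{r},\mathbf{t})$, and checks directly from \eqref{eq:symbP} and \eqref{eq:symbolS} that $\sigma_\mathbf{L}(v)\circ\sigma_\mathbf{P}(v)=0$ for every $v\in T^*M\setminus 0$. For injectivity of $\sigma_\mathbf{P}(v)$ with $v\neq 0$, the condition $vr=0$ gives $r=0$, and then $v\wedge\iota_V\om=0$ together with non-degeneracy of $\om$ forces $V=0$ (since $V\mapsto v\wedge\iota_V\om$ is injective when $v\neq 0$ and $\om$ is symplectic). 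For exactness at $A^1$, suppose $(\dot\Om,\dot\theta,\dot\om)$ lies in $\ker\sigma_\mathbf{L}(v)$. From $\sigma_{\mathbf{L}_1}$, $\dot\Om=v\wedge\beta$ for some $\beta\in\Om^{2,0}\oplus\Om^{1,1}$; writing $v=v^{1,0}+v^{0,1}$ and using that $\Om$ trivializes $\Lambda^{3,0}T^*$, one finds $V\in T_{\CC}$ such that $\beta=\iota_V\Om$, i.e.\ $\dot\Om=v\wedge\iota_V\Om$. From $\sigma_{\mathbf{L}_3}$ combined with the pointwise algebraic Lefschetz identity, $v\wedge J(v\wedge\dot\om)=0$ forces $\dot\om=v\wedge\iota_V\om+v\wedge\mu$ for a form $\mu$ absorbed by reparametrizing $V$; this is exactly where the order-two structure of $\mathbf{L}_3$ interacts nontrivially with $\om$, and the conformally balanced symbol $\sigma_{\mathbf{L}_4}$ rules out the remaining trace component. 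Finally, from $\sigma_{\mathbf{L}_5}$, $v\wedge\dot\theta\wedge\om^2=0$ gives $\dot\theta=vr+v\wedge\nu$ for some $r$ and $\nu\in\Om^1(i\RR)$ with $\nu\wedge\om^2=0$; the latter forces $v\wedge\nu$ to agree with $v\wedge\iota_VF$ modulo a term killed by both $(\cdot)^{0,2}$ in $\sigma_{\mathbf{L}_2}$ and $\cdot\wedge\om^2$ in $\sigma_{\mathbf{L}_5}$. Putting these together yields $(V,r)$ with $\sigma_\mathbf{P}(v)(V,r)=(\dot\Om,\dot\theta,\dot\om)$. The main obstacle is precisely this last bookkeeping: the symbols of the Hermite--Yang--Mills pair $(\mathbf{L}_2,\mathbf{L}_5)$ together with $\mathbf{L}_1$ must match, in the symbol calculus, the three components of $\sigma_\mathbf{P}$, and the whole argument hinges on the fact that $\Om$ and $\om$ induce isomorphisms $\iota_{\cdot}\Om\colon T_\CC\to\Lambda^{2}T^*_\CC/\Lambda^2 v^\perp$ and $\iota_{\cdot}\om\colon T\to\Lambda^1T^*/\RR v$ pointwise.

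Once ellipticity is established, the standard Douglis--Nirenberg theory \cite{DN,LM1,LM2} gives that the complex is Fredholm and hence $H^1(A^*)$ is finite-dimensional. For the interpretation, an infinitesimal deformation of a solution is by definition a tangent vector to $\cP$ at $(\Om,\theta,\om)$ satisfying the linearized equations, i.e.\ an element of $\ker\mathbf{L}\subset T_0\cP\cong A^1$, modulo deformations coming from the symmetry group $\ctG$, i.e.\ $\Im\mathbf{P}$. This gives exactly $H^1(A^*)$ and completes the proof.
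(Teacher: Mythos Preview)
Your overall architecture is correct and matches the paper: $\mathbf{L}\circ\mathbf{P}=0$ follows from $\ctG$-invariance, and the content is the symbol chase at $A^1$. But the execution of the symbol chase has real gaps.

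First, a bookkeeping error: the curvature term $\iota_VF$ in \eqref{eq:abelianinfinaction} is of order zero, so it drops from $\sigma_\mathbf{P}$ (see \eqref{eq:symbP}); the target for the connection component is simply $vr$, and there is no ``$v\wedge\iota_VF$'' to match. Relatedly, the expression ``$\dot\theta=vr+v\wedge\nu$'' does not type-check, since $\dot\theta$ is a $1$-form and $v\wedge\nu$ is a $2$-form. The correct way to handle $\dot\theta$ is to use $\sigma_{\mathbf{L}_2}$ and $\sigma_{\mathbf{L}_5}$ together: $(v\wedge\dot\theta)^{0,2}=0$ forces $\dot\theta^{0,1}\in\CC\cdot v^{0,1}$, hence $\dot\theta\in\operatorname{span}_{i\RR}\{v,Jv\}$; then $v\wedge\dot\theta\wedge\om^2=0$ kills the $Jv$-component because $v\wedge Jv\wedge\om^2\neq 0$, leaving $\dot\theta=rv$.

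Second, and more seriously, the treatment of $\dot\om$ is where the actual difficulty lies, and your sketch (``Lefschetz identity \ldots rules out the remaining trace component'') does not do it. The condition $\sigma_{\mathbf{L}_3}(v)=v\wedge J(v\wedge\dot\om)=0$ alone does not force $\dot\om$ into the image of $\sigma_\mathbf{P}$; one needs the interaction with $\sigma_{\mathbf{L}_4}$. The paper's argument is: applying $J$ to $\sigma_{\mathbf{L}_3}(v)=0$ gives $Jv\wedge v\wedge\dot\om=0$; wedging $\sigma_{\mathbf{L}_4}(v)=0$ with $Jv$ and using this yields $\delta(\|\Om\|_\om)=0$. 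Then \eqref{eq:norm3form} and \eqref{eq:dotnormOmega} give $\dot\om\wedge\om^2=v\wedge(\iota_V\om)\wedge\om^2$. Setting $\tau=\dot\om-v\wedge\iota_V\om$, one checks $\tau\in\Lambda^{1,1}_\RR$ (from \eqref{eq:dotomega02} and \eqref{eq:dotJtensor}) and obtains the three conditions $\tau\wedge\om^2=0$, $\tau\wedge v\wedge\om=0$, $\tau\wedge v\wedge Jv=0$; the last step is explicit linear algebra in a frame $\{v,Jv,X,JX,Y,JY\}$ showing these force $\tau=0$. None of this is automatic, and your proposal does not supply it.
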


\begin{proof}
  By $\cX$-invariance of \eqref{eq:stromabelian}, $\mathbf{L} \circ
  \mathbf{P} = 0$. We prove next that the associated sequence of
  symbols is exact.  Assume that $\sigma_\mathbf{L}(v)(\dot \Om,\dot
  \theta,\dot\om)=0$ for $v \in T^*\setminus M$.  From the equations
  $\sigma_{\mathbf{L}_j}(v)=0$, $j=2, 5$, we deduce that there is a
  purely imaginary constant $r$ such that $\dot \theta=v r$.  Using
  the ellipticity of the complex \eqref{eq:CYinfinitesimal} and the
  isomorphism \eqref{eq:T0isom} there exists a unique $V\in T$ such
  that
$$
\dot \Om=v\wedge \iota_{V} \Om.
$$
In terms of $\dot J$, this translates to
\begin{equation}
  \label{eq:dotJtensor}
  \dot J= Jv\otimes V +v\otimes JV.
\end{equation}
It remains to show that $\dot\om=v\wedge \iota_V\om$. From
$\sigma_{\mathbf{L}_4}(v)=0$ we deduce
\begin{equation}\label{eq:sigma4}
  (J v) \wedge v \wedge\left(2\vert\vert\Om\vert\vert_{\om}\dot\om\wedge\om+\delta(\vert\vert \Om\vert\vert_\om)\om^2\right) = 0.
\end{equation}
Using now $J (\sigma_{\mathbf{L}_3}(v)) =
0$, 
we obtain
$$
\delta(\vert\vert \Om\vert\vert_\om)(J v) \wedge v \wedge \om^2 = 0,
$$
and from this,
\begin{equation}\label{eq:delomeganul}
  \delta(\vert\vert \Om\vert\vert_\om) = 0.
\end{equation}
Using now \eqref{eq:norm3form} and \eqref{eq:dotnormOmega}
\begin{equation}\label{eq:iVomombar}
  \begin{split}
    \|\Omega\|^2_{\omega} \dot \omega \wedge \omega^2 & = 2i \(\dot \Omega \wedge \overline{\Omega} + \Omega \wedge \overline{\dot \Omega}\)\\
    & = 2i v \wedge i_V (\Omega \wedge \overline{\Omega})\\
    & = \|\Omega\|^2_{\omega} v \wedge (i_V \omega) \wedge \omega^2.
  \end{split}
\end{equation}
Define $\tau = \dot \omega - v \wedge (i_V \omega)$ and notice from
\eqref{eq:dotomega02} and \eqref{eq:dotJtensor} that $\tau$ is a real
$(1,1)$-form. Furthermore, from \eqref{eq:iVomombar} and
\eqref{eq:delomeganul}, combined with the vanishing of
$\sigma_{\mathbf{L}_3}(v)$ and $\sigma_{\mathbf{L}_4}(v)$, we deduce
\begin{equation}\label{eq:3conditions}
  \begin{split}
    \tau \wedge \omega^2 &= 0,\\
    \tau \wedge v \wedge \omega & = 0,\\
    \tau \wedge v \wedge J v & = 0.
  \end{split}
\end{equation}
From the last equation
$$
\tau = v \wedge a + Jv \wedge b
$$
for suitable $1$-forms $a$ and $b$. Complete the family $\lbrace v, Jv
\rbrace$ into a basis of $T^*$ with forms $\lbrace X,JX,Y,JY \rbrace$
such that $\om$ is written
\begin{equation}
  \label{eq:simplifyom}
  \om=l v\wedge Jv + X\wedge JX + Y\wedge J Y
\end{equation}
for some real constant $l$. Now, from the second equation in
\eqref{eq:3conditions} we obtain that $b$ is proportional to $v$ and
therefore
$$
\tau = v \wedge (t_1 J v + t_2 X + t_3 JX + t_4 Y + t_5 JY)
$$
for suitable real constants $t_1, \ldots, t_5 \in \RR$.  We note that
a basis of the space of real $(1,1)$-forms is
\begin{align*}
  \Lambda^{1,1}_\RR= & \langle\: v\wedge Jv,\:X\wedge JX,\:Y\wedge JY, \: X\wedge v + JX\wedge J v, \\
  & J X\wedge v + Jv\wedge X, \: Y\wedge v + JY\wedge J v,\: J Y\wedge v + Jv\wedge Y,\\
  & X\wedge Y + JX\wedge J Y ,\: J X\wedge Y + JY\wedge X\:\rangle
\end{align*}
and therefore necessarily
$$
\tau = t_1 v \wedge J v.
$$
Finally, from the first equation in \eqref{eq:3conditions} we obtain
$t_1 = 0$, which implies that $\dot \omega = v \wedge (i_V \omega)$ as
claimed.
\end{proof}

\subsection{Extension of the complex}
\label{sec:extensioncomplex}
We extend the complex (\ref{eq:abcomplex}) in order to define a space
of obstructions to integrability of infinitesimal deformations of the
abelian equations \eqref{eq:stromabelian}.

We first define a complex parameterizing joint deformations of a
Calabi-Yau structure on $M$ endowed with a holomorphic line
bundle. For this, we combine an elliptic complex defined by Goto
\cite{Got} with previous work of Huang \cite{hu}. Goto's complex is an
extension of the complex (\ref{eq:CYinfinitesimal}) given by
\begin{equation}
  \label{eq:CYcomplex}
  \begin{array}{lll}
    0\rightarrow \Omega^{2,0} \lra{d} \Om^{3,0}\oplus \Om^{2,1}\lra{d} \Om^{3,1}\oplus\Om^{2,2}
    \lra{d} \Om^{3,2}\oplus\Om^{2,3}\lra{d} \Om^{3,3}\lra{} 0,
  \end{array}
\end{equation}
where we use the identification $\Om^0(T^{1,0}) \cong \Omega^{2,0}$
provided by the isomorphism \eqref{eq:T0isom}.  Following \cite{hu}
(cf. \cite{GFT}) we define an elliptic complex
\begin{equation}
  \label{eq:jointComplex}
  (C^*) \qquad \qquad 0\rightarrow C^0 \lra{\dbar_0} C^1 \lra{\dbar_0} C^2 
  \lra{\dbar_0} C^3 \lra{\dbar_0} C^4 \lra{} 0
\end{equation}
where
\begin{align*}
  C^j & := \Omega^{3,j-1}\oplus \Omega^{2,j} \oplus \Omega^{0,j},
  \qquad j \geq 0,
\end{align*}
(by convention $\Omega^{3,-1} = 0$) with differential given by
$$
\dbar_0(\alpha,\beta) = (d \alpha, \dbar \beta -
F^{T_0^{-1}(\alpha^{2,j})}).
$$

To include deformations of the metric, we build on the complex for the
Hermite--Yang--Mills equations defined by Kim \cite{kim} (see also
\cite[p. 246]{Kob}). Consider the following commutative diagram
\begin{equation}
  \label{eq:AComplex}
  \xymatrix{
    (\tilde{A^*}) \qquad \qquad 0 \ar[r] & \tilde{A^0} \ar[r]^{\mathbf{P}} \ar[d]^{p_0} & \tilde{A^1} \ar[r]^{\widetilde{\mathbf{L}}} \ar[d]^{p_1} & \tilde{A^2} \ar[r]^{\dbar_0 \oplus \widetilde{\mathbf{d}}} \ar[d]^{p_2} & \tilde{A^3} \ar[r]^{\dbar_0 \oplus d} \ar[d]^{p_3} & \tilde{A^4} \ar[r] \ar[d]^{p_4} & 0\\
    (C^*) \qquad \qquad 0 \ar[r] & C^0 \ar[r]^{\dbar_0} & C^1 \ar[r]^{\dbar_0} & C^2 \ar[r]^{\dbar_0} & C^3 \ar[r]^{\dbar_0} & C^4 \ar[r] & 0
  }
\end{equation}
with
\begin{align*}
  \tilde{A^0} &:= 
  A^0, & & p_0(V,r)  = (i_V\Omega,r), \\
  \tilde{A^1} &:= A^1, & &p_1(\dot \Omega, \dot \theta, \dot \omega)  = (\dot\Omega,\dot \theta^{0,1}), \\
  \tilde{A^2} &:= C^2 \oplus \Omega^{2,2}_\RR \oplus \Omega^5 \oplus \Omega^6(i\RR), & & p_2(s,\gamma,\delta,\epsilon)  = s, \\
  \tilde{A^j} & := C^j \oplus \Omega^{j+2} \oplus \Omega^{j+3}, & &
  p_j(s,\tau,\sigma) = s, \qquad j= 3,4,
\end{align*}
where $s$ denotes an element in $C^j$ for $j \geq 2$. It remains to
define the maps $\widetilde{\mathbf{L}}$ and $\widetilde{\mathbf{d}}$
in \eqref{eq:AComplex}, given by suitable modifications of the
operator $\mathbf{L}$ in \eqref{eq:abcomplex} (see Lemma
\ref{lem:diff}) and the exterior differential. Firstly, we define
$$
\widetilde{\mathbf{L}} = \mathbf{L}_1 \oplus \mathbf{L}_2 \oplus
\mathbf{L}_3^{2,2} \oplus \mathbf{L}_4 \oplus \mathbf{L}_5,
$$
where $\mathbf{L}_3^{2,2}$ denotes the $(2,2)$ part of $\mathbf{L}_3$
in (\ref{eq:diff}). We note that $d \circ \mathbf{L}_3 = 0$, but $d
\circ \mathbf{L}_3^{2,2}$ does not vanish in general. In fact, we have
the following formula.

\begin{lemma}\label{lem:S13}
  The $(1,3)$ part of $\mathbf{L}$ is given by \begin{equation}
    \label{eq:S13}
    \mathbf{L}_3^{1,3} =  -2i\del (\om^{T_0^{-1}(\mathbf{L}_1^{2,2})})- 2i(\partial \omega)^{T_0^{-1}(\mathbf{L}_1^{2,2})} - 2c \mathbf{L}_2 \wedge F.
  \end{equation}
\end{lemma}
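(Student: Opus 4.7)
The plan is to expand both sides of the claimed identity independently and verify they agree. For the left-hand side, I would start from Lemma \ref{lem:diff}, which gives $\mathbf{L}_3 = d\alpha$ with $\alpha = Jd\dot\om - J(d\om)^{\dot J J} - 2c\dot\theta\wedge F$, and extract the $(1,3)$-component via $\mathbf{L}_3^{1,3} = \partial\alpha^{0,3} + \dbar\alpha^{1,2}$. First I would decompose $\alpha$ by bidegree, using that $J$ acts on a $(p,q)$-component of a $3$-form as $-i^{p-q}$ and that the anticommutation $\dot J J + J\dot J = 0$ gives $\dot J J|_{T^{0,1}} = -i\dot J^{1,0}$, $\dot J J|_{T^{1,0}} = i\dot J^{0,1}$. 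Applying $\partial$ and $\dbar$ and invoking the Leibniz rules
\[
\partial(\sigma^\gamma) = (\partial\sigma)^\gamma + \sigma^{\partial\gamma}, \qquad \dbar(\sigma^\gamma) = (\dbar\sigma)^\gamma + \sigma^{\dbar\gamma}
\]
for $\gamma \in \Omega^1(T_\CC)$ (which hold by direct computation in local holomorphic coordinates), together with $\partial\dbar + \dbar\partial = 0$ and $\dbar F = 0$, yields
\[
\mathbf{L}_3^{1,3} = -2i\partial\dbar\dot\om^{0,2} + 2(\partial\dbar\om)^{\dot J^{1,0}} + (\dbar\om)^{\partial\dot J^{1,0}} - (\partial\om)^{\dbar\dot J^{1,0}} - 2c\dbar\dot\theta^{0,1}\wedge F.
\]

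For the right-hand side, I would use that $\Om$ is holomorphic, so $T_0$ commutes with $\dbar$; combined with $\dot J^{1,0} = 2iT_0^{-1}(\dot\Om^{2,1})$ from \eqref{eq:dotJtensor10}, this gives $T_0^{-1}(\mathbf{L}_1^{2,2}) = -\tfrac{i}{2}\dbar\dot J^{1,0}$. The relation $\om^{\dot J^{1,0}} = 2i\dot\om^{0,2}$ from \eqref{eq:dotomega02}, together with another application of Leibniz, yields $\om^{T_0^{-1}(\mathbf{L}_1^{2,2})} = \dbar\dot\om^{0,2} + \tfrac{i}{2}(\dbar\om)^{\dot J^{1,0}}$. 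Together with the identity $(F^{\dot J})^{0,2} = F^{\dot J^{1,0}}$ (immediate since $\dot J^{0,1}$ annihilates $T^{0,1}$), the claimed right-hand side expands as
\[
-2i\partial\dbar\dot\om^{0,2} + (\partial\dbar\om)^{\dot J^{1,0}} + (\dbar\om)^{\partial\dot J^{1,0}} - (\partial\om)^{\dbar\dot J^{1,0}} - 2c\dbar\dot\theta^{0,1}\wedge F - icF^{\dot J^{1,0}}\wedge F.
\]

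Subtracting the two expansions, the discrepancy reduces to $(\partial\dbar\om)^{\dot J^{1,0}} + icF^{\dot J^{1,0}}\wedge F$, which vanishes by the Bianchi identity $2i\partial\dbar\om = cF\wedge F$ at the base solution, combined with the derivation property $(F\wedge F)^{\dot J^{1,0}} = 2F^{\dot J^{1,0}}\wedge F$, giving $(\partial\dbar\om)^{\dot J^{1,0}} = -icF^{\dot J^{1,0}}\wedge F$.

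The main obstacle is the simultaneous bookkeeping of bidegrees and the multiple variants of the contraction operation $\sigma\mapsto\sigma^\gamma$; the proof crucially relies on the Bianchi identity being satisfied at the base solution, which is precisely what links the $\partial\dbar\om$-term appearing in the linearization of $dd^c\om$ to the $F\wedge F$-term that enters through $\mathbf{L}_2$.
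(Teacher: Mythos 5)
Your strategy is essentially the paper's: decompose by bidegree, use the contraction Leibniz rule, identify $T_0^{-1}(\mathbf{L}_1^{2,2})=-\tfrac{i}{2}\dbar\dot J^{1,0}$ via holomorphy of $\Omega$, and close the remaining gap between $(\partial\dbar\omega)^{\dot J^{1,0}}$ and $F^{\dot J^{1,0}}\wedge F$ with the Bianchi identity at the base solution. However, one step is genuinely wrong: the $\partial$-analogue of the Leibniz rule, $\partial(\sigma^\gamma)=(\partial\sigma)^\gamma+\sigma^{\partial\gamma}$, fails for $\gamma\in\Omega^{0,1}(T^{1,0})$. Writing $\gamma=\sum_k\gamma_k\otimes\frac{\partial}{\partial z_k}$, one finds $\partial(\iota_{\partial/\partial z_k}\alpha)=-\iota_{\partial/\partial z_k}\partial\alpha+\partial_{z_k}\alpha$ (coefficient-wise derivative), because $\iota_{\partial/\partial z_k}dz_j=\delta_{jk}\neq 0$; this produces a first-order correction $-\sum_k\gamma_k\wedge\partial_{z_k}\alpha$ that does not vanish for $\alpha=\dbar\omega$, and moreover the term $\sigma^{\partial\gamma}$ is not globally well defined (coefficient-wise $\partial$ of a section of $\Lambda^{0,1}\otimes T^{1,0}$ depends on the holomorphic frame). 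Only the $\dbar$-version \eqref{eq:delbagamma} is valid, precisely because $\iota_{\partial/\partial z_k}d\bar z_j=0$. The paper's proof is arranged so that only \eqref{eq:delbagamma} is ever used: the term $\partial\big((\dbar\omega)^{\dot J^{1,0}}\big)$ arising from $-\partial\dbar(\omega^{\dot J^{1,0}})$ cancels against the identical term in $-(dJ(d\omega)^{\dot JJ})^{1,3}$, and the surviving $\partial$ acts on $\omega^{\dbar\dot J^{1,0}}$ as a single block, which is exactly the first term of \eqref{eq:S13}.

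In your write-up the illegitimate expansion of $\partial\big((\dbar\omega)^{\dot J^{1,0}}\big)$ is performed once on each side of the comparison, so the spurious difference $(\partial\dbar\omega)^{\dot J^{1,0}}+(\dbar\omega)^{\partial\dot J^{1,0}}-\partial\big((\dbar\omega)^{\dot J^{1,0}}\big)$ cancels when you subtract, and your final conclusion is correct; but both displayed intermediate identities are false as stated and contain the ill-defined term $(\dbar\omega)^{\partial\dot J^{1,0}}$. The repair is simple: do not expand $\partial\big((\dbar\omega)^{\dot J^{1,0}}\big)$ at all, either leaving it intact on both sides or following the paper in applying only the $\dbar$-rule \eqref{eq:delbagamma}; the rest of your computation (the identification of $(F^{\dot J})^{0,2}=F^{\dot J^{1,0}}$, the derivation property $(F\wedge F)^{\dot J^{1,0}}=2F^{\dot J^{1,0}}\wedge F$, and the use of $2i\partial\dbar\omega=c\,F\wedge F$) is sound.
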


This motivates the introduction of the map
$$
\widetilde{\mathbf{d}} \colon \tilde{A^2} \to \Omega^{5} \oplus
\Omega^{6},
$$
defined by
$$
\widetilde{\mathbf{d}}(\alpha,\beta,\gamma,\delta,\epsilon) = \(d
\(\gamma- 2\operatorname{Re}\( 2i(\partial
\omega)^{T_0^{-1}(\alpha^{2,2})} + 2c\beta\wedge F\)\) ,d\delta\)
$$

We are now ready to prove the main result of this section.
\begin{proposition}
  \label{prop:fullabeliancomplex}
  The sequence $\tilde{A^*}$ defines an elliptic complex of
  differential operators, whose first cohomology $H^1(\tilde{A^*})$
  equals the cohomology $H^1(A^*)$ of \eqref{eq:abcomplex}. The space
  of obstructions for the system \eqref{eq:stromabelian} is defined as
  $H^2(\tilde{A^*})$.
\end{proposition}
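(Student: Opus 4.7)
The plan is to verify three things in order: that $\tilde A^*$ forms a cochain complex of multi-degree differential operators, that the associated symbol sequence is exact, and that $H^1(\tilde A^*) \cong H^1(A^*)$. Since the groups $\tilde A^0 = A^0$, $\tilde A^1 = A^1$ and the operator $\mathbf P$ coincide with those of $A^*$, the cohomological comparison in degree one reduces to comparing the kernels of $\widetilde{\mathbf L}$ and $\mathbf L$.

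For the complex property, the composition $\widetilde{\mathbf L} \circ \mathbf P = 0$ is immediate from Proposition \ref{prop:elliptic}, since $\widetilde{\mathbf L}$ is obtained from $\mathbf L$ by replacing the fourth component by its $(2,2)$ projection. The essential check is $(\dbar_0 \oplus \widetilde{\mathbf d}) \circ \widetilde{\mathbf L} = 0$. The $\dbar_0$-block acts on the $C^2$-valued slot $(\mathbf L_1, \mathbf L_2)$; vanishing follows from $d^2 = 0$ together with the compatibility identity $\dbar \mathbf L_2 = F^{T_0^{-1}(\mathbf L_1^{2,2})}$, obtained by linearizing $F^{0,2} = 0$ and which is precisely the identity used by Huang \cite{hu} to build $C^*$. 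The decisive block is $\widetilde{\mathbf d}$: combining Lemma \ref{lem:S13} with the reality of $\mathbf L_3$ (so that $\overline{\mathbf L_3^{1,3}} = \mathbf L_3^{3,1}$), the argument of $d$ in the first slot of $\widetilde{\mathbf d} \circ \widetilde{\mathbf L}$ rearranges as
\[
\mathbf L_3 + 4\operatorname{Re}\left(i\partial\left(\omega^{T_0^{-1}(\mathbf L_1^{2,2})}\right)\right).
\]
The first summand is $d$-closed because $\mathbf L_3 = d(\cdots)$ from Lemma \ref{lem:diff}, while $d$ of the second summand equals $-4\operatorname{Re}(i\partial\dbar\, \omega^{T_0^{-1}(\mathbf L_1^{2,2})})$, which vanishes for bidegree reasons: $\omega^{T_0^{-1}(\mathbf L_1^{2,2})}$ lies in $\Omega^{0,3}$, so $\dbar$ of it lands in $\Omega^{0,4} = 0$ on a three-dimensional complex manifold. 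The second slot $d\mathbf L_4 = 0$ is automatic. The remaining composition $(\dbar_0 \oplus d) \circ (\dbar_0 \oplus \widetilde{\mathbf d}) = 0$ follows from $\dbar_0^2 = 0$ on $C^*$ and $d^2 = 0$.

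Ellipticity is verified by extending the Douglis--Nirenberg order system of Lemma \ref{lem:symbol} to the new slots, assigning orders so that the new $d$- and $\dbar$-blocks appear at their expected first order. The leading symbol at a nonzero covector $v$ is built from exterior multiplication by $v$ and by its $(0,1)$-part, and the full symbol sequence splits into blocks: the symbol-exactness of Huang's elliptic complex $C^*$, the symbol-exactness of the de Rham complex in the relevant top degrees, and the wedge-product exactness already established in Proposition \ref{prop:elliptic}. For the cohomological identification, Lemma \ref{lem:S13} plays the key role: any triple in $\ker \widetilde{\mathbf L}$ satisfies $\mathbf L_1 = \mathbf L_2 = \mathbf L_3^{2,2} = 0$, hence by Lemma \ref{lem:S13} $\mathbf L_3^{1,3} = 0$, and by reality of $\mathbf L_3$ also $\mathbf L_3^{3,1} = 0$; therefore $\mathbf L_3 = 0$ and the triple lies in $\ker \mathbf L$. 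Combined with the coincidence of $\tilde A^0$, $\tilde A^1$ and $\mathbf P$ with their counterparts in $A^*$, this yields $H^1(\tilde A^*) = H^1(A^*)$.

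The main obstacle is the delicate bidegree and reality bookkeeping in $(\dbar_0 \oplus \widetilde{\mathbf d}) \circ \widetilde{\mathbf L} = 0$: the correction terms in the definition of $\widetilde{\mathbf d}$ in \eqref{eq:AComplex} are engineered precisely so that, via Lemma \ref{lem:S13}, the expression $\mathbf L_3^{2,2}$ is replaced by $\mathbf L_3$ up to a form whose $d$-vanishing relies on $\dbar$ annihilating $(0,3)$-forms in complex dimension three. Once this identity is in hand, ellipticity and the identification $H^1(\tilde A^*) \cong H^1(A^*)$ amount to organized bookkeeping.
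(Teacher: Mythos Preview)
Your argument for the complex property and for the identification $H^1(\tilde A^*)=H^1(A^*)$ is correct and coincides with the paper's: both hinge on Lemma~\ref{lem:S13}, and your computation of the argument of $d$ in the first slot of $\widetilde{\mathbf d}\circ\widetilde{\mathbf L}$ as $\mathbf L_3 + 4\operatorname{Re}\big(i\partial(\omega^{T_0^{-1}(\mathbf L_1^{2,2})})\big)$ is exactly what the paper obtains (written there as $d(2\operatorname{Re}(\partial(\omega^{\dbar\dot J^{1,0}})))$, using $T_0^{-1}(\mathbf L_1^{2,2})=\tfrac{1}{2i}\dbar\dot J^{1,0}$), and your reason for its vanishing---that $\omega^{T_0^{-1}(\mathbf L_1^{2,2})}\in\Omega^{0,3}$ so $\dbar$ kills it---is precisely the ``type reasons'' the paper invokes.

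The gap is in the ellipticity argument. Your claim that the full symbol sequence ``splits into blocks'' does cover steps one, two, four and five: this is exactly how the paper handles them, via ellipticity of $C^*$, of the short de Rham tail $\Omega^{2,2}\to\Omega^5\to\Omega^6$, and of Proposition~\ref{prop:elliptic}. But at step three (exactness of the symbol sequence at $\tilde A^2$) the splitting is not clean: the symbol of $\widetilde{\mathbf L}$ mixes the $C^1$-slot and the $\Omega^{1,1}_\RR$-slot (for instance $\sigma_{\mathbf L_4}$ depends on $\dot\Omega$ through $\delta(\|\Omega\|_\omega)$, and $\sigma_{\mathbf L_3^{2,2}}$ carries less information than $\sigma_{\mathbf L_3}$), so one cannot simply peel off $C^*$ and quote de Rham exactness for what remains. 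The paper circumvents this entirely: having established exactness of the symbol sequence at all steps \emph{except} $\tilde A^2$, it checks the Euler-characteristic identity
\[
\sum_j (-1)^j \dim(\tilde A^j_x)=0
\]
fiberwise (using $\tilde A^1\cong C^1\oplus\Omega^{1,1}_\RR$ and $\tilde A^0\oplus\Omega^0\cong C^0$ together with the corresponding identity for $C^*$), which forces exactness at the remaining step. You should either add this dimension count, or supply a direct symbol chase at $\tilde A^2$ that accounts for the cross-terms; as written, the block-splitting sketch does not establish ellipticity there.
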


\begin{remark}
  The complex $\tilde{A^*}$ has a slightly different flavour from the
  one of Kim \cite{kim} (see also \cite{Kob}), due to the conformally
  balanced equation. In Kim's complex, the linearization of the
  hermitian-Yang-Mills equation is extended by zero, while in
  \eqref{eq:AComplex} we need to introduce an exterior differential to
  extend the linearization of the conformally balanced equation as
  part of an elliptic complex.
\end{remark}

We start with the proof of \eqref{eq:S13}. We need the following.

\begin{lemma}
  Let $\gamma \in \Om^d(T^{1,0})$ and $\alpha\in \Om^{p,q}$.  Then
  \begin{equation}
    \label{eq:delbagamma}
    \delb (\alpha^\gamma)= \alpha^{\delb \gamma} - (-1)^d (\delb \alpha )^\gamma.
  \end{equation}
\end{lemma}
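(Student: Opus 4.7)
The plan is to reduce to a rank-one computation and then exploit a standard super-commutator identity between $\dbar$ and interior contraction.

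First, both sides of \eqref{eq:delbagamma} depend $\CC^\infty(M)$-linearly on $\gamma$ and the formula is local in nature, so by a partition-of-unity argument and bilinearity it is enough to prove the identity for a decomposable $\gamma = \beta\otimes V$, where $\beta\in\Om^{0,d}$ and $V\in \Gamma(T^{1,0})$ (we interpret $\Om^d(T^{1,0})$ as $\Om^{0,d}(T^{1,0})$, which is the only setting in which $\dbar\gamma$ is canonically defined as a section of the same kind). Unwinding the definition \eqref{eq:sigmagamma}, a short check on the skew-symmetrization gives the pointwise identity
$$
\alpha^{\beta\otimes V}=\beta\wedge \iota_V\alpha.
$$
Extending $\dbar$ to $T^{1,0}$-valued $(0,\ast)$-forms via the holomorphic structure on $T^{1,0}$, the Leibniz rule yields
$$
\dbar\gamma=\dbar\beta\otimes V+(-1)^d\beta\wedge\dbar V,
$$
where in local holomorphic coordinates $\dbar V=\dbar V^i\otimes\partial_i$.

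Next, the core step is the $d=0$ case, which is the identity
\begin{equation}\label{eq:coreplan}
\dbar(\iota_V\alpha)=\alpha^{\dbar V}-\iota_V\dbar\alpha \qquad (V\in\Gamma(T^{1,0})).
\end{equation}
To verify this, write $V=V^i\partial_i$ locally. For the holomorphic coordinate fields $\partial_i$ one checks directly on monomials $f\,dz^I\wedge d\bar z^J$ that $\iota_{\partial_i}$ anti-commutes with $\dbar$, i.e.\ $\dbar\iota_{\partial_i}+\iota_{\partial_i}\dbar=0$. Applying the Leibniz rule to $\iota_V\alpha=V^i\iota_{\partial_i}\alpha$ and using this super-commutation produces
$$
\dbar(\iota_V\alpha)=\dbar V^i\wedge\iota_{\partial_i}\alpha-V^i\iota_{\partial_i}\dbar\alpha=\alpha^{\dbar V}-\iota_V\dbar\alpha,
$$
which is exactly \eqref{eq:coreplan}.

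Finally, we assemble the general case. Applying the Leibniz rule for $\dbar$ to $\alpha^\gamma=\beta\wedge\iota_V\alpha$ and substituting \eqref{eq:coreplan},
$$
\dbar(\alpha^\gamma)=\dbar\beta\wedge\iota_V\alpha+(-1)^d\beta\wedge\alpha^{\dbar V}-(-1)^d\beta\wedge\iota_V\dbar\alpha.
$$
The first two terms reassemble into $\alpha^{\dbar\gamma}$ using the decomposition of $\dbar\gamma$ above and the evident identity $\beta\wedge\alpha^{\dbar V}=\alpha^{\beta\wedge\dbar V}$, while the last term is $-(-1)^d(\dbar\alpha)^\gamma$. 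This yields \eqref{eq:delbagamma}. The only place where care is required is the sign in \eqref{eq:coreplan}, which comes from the fact that $\iota_V$ is an odd derivation: overlooking the anti-commutation of $\dbar$ with $\iota_{\partial_i}$ would reverse the sign of the second term in the final formula, so this is where the main bookkeeping work lies.
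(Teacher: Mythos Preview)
Your argument is correct and follows essentially the same route as the paper: both proofs expand $\gamma$ locally against holomorphic coordinate vector fields and reduce everything to the anti-commutation $\dbar\,\iota_{\partial_{z_k}}+\iota_{\partial_{z_k}}\dbar=0$, then reassemble via the Leibniz rule. One small slip: neither side of \eqref{eq:delbagamma} is literally $C^\infty(M)$-linear in $\gamma$ (each picks up a $\dbar f\wedge\alpha^\gamma$ term under $\gamma\mapsto f\gamma$), but these extra terms match, so the \emph{difference} is tensorial and your reduction to decomposable $\gamma$ is valid.
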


\begin{proof}
  Write locally $\gamma= \sum_k \gamma_k \frac{\del}{\del z_k}$ so
  that
 $$
 \alpha^\gamma= \sum_k \gamma_k\wedge \iota_{\frac{\del}{\del
     z_k}}\alpha.
 $$
 Then
 $$
 \delb \alpha^\gamma= \sum_k (\delb \gamma_k)\wedge
 \iota_{\frac{\del}{\del z_k}}\alpha+ (-1)^d \sum_k \gamma_k\wedge
 \delb( \iota_{\frac{\del}{\del z_k}}\alpha).
 $$
 The formula
 $$
 \delb ( \iota_{\frac{\del}{\del z_k}}\alpha) = -
 \iota_{\frac{\del}{\del z_k}} \delb\alpha
 $$
 follows from $\iota_{\frac{\del}{\del z_k}} d\overline{z}_j=0$ and
 the expression of $\alpha$ in local coordinates.
\end{proof}

\begin{proof}[Proof of Lemma \ref{lem:S13}]
  By (\ref{eq:diff}) and \eqref{eq:dotomega02}
  \begin{equation}\label{eq:S5, part13}
    \begin{split}
      (\mathbf{L}_3(\dot\Om,\dot \theta, \dot \om))^{(3,1)+(1,3)}  = {}& -\partial \dbar(\omega^{\dot J}) - (d J (d\om)^{\dot J J})^{(3,1)+(1,3)}\\
      & - 2c (\partial \dot \theta^{1,0} + \dbar \dot
      \theta^{0,1})\wedge F,
    \end{split}
  \end{equation}
  and
  \begin{equation}
    (d J (d\om)^{\dot J J})^{1,3}  = \dbar((\partial \omega)^{\dot J^{1,0}}) - \partial((\dbar \omega)^{\dot J^{1,0}}).
  \end{equation}
  From formula (\ref{eq:delbagamma}), we obtain
  \begin{align*}
    (\mathbf{L}_3(\dot\Om,\dot \theta, \dot \om))^{(1,3)} & {} = -\del\delb (\om^{\dot J^{1,0}}) -\dbar((\partial \omega)^{\dot J^{1,0}}) + \partial((\dbar \omega)^{\dot J^{1,0}})\\
    & \phantom{{}=} - 2c (\dbar \dot \theta^{0,1})\wedge F \\
    & {} =  -\del (\om^{\dbar\dot J^{1,0}})-(\partial \omega)^{\delb\dot J^{1,0}}+(\del\delb \om)^{\dot J^{1,0}} \\
    & \phantom{{} =} - 2c (\dbar \dot \theta^{0,1})\wedge F.
  \end{align*}
  The proof follows now using the third equation in
  \eqref{eq:stromabelian}.
\end{proof}

Using Lemma \ref{lem:S13}, we go now for the proof of Proposition
\ref{prop:fullabeliancomplex}.

\begin{proof}[Proof of Proposition \ref{prop:fullabeliancomplex}]
  We first prove that \eqref{eq:AComplex} is a complex. Note that
  $\mathbf{L} \circ \mathbf{P} = 0$ implies $\widetilde{\mathbf{L}}
  \circ \mathbf{P} = 0$ and trivially $d\circ
  \widetilde{\mathbf{d}}=0$. Hence, using that $C^*$ is a complex, it
  remains to prove that $\widetilde{\mathbf{d}} \circ
  \widetilde{\mathbf{L}} = 0$. Given $(\dot\Om,\dot \theta, \dot
  \om)\in \tilde{A^1}$, using Lemma \ref{lem:S13} we obtain
$$
\widetilde{\mathbf{d}} \circ \widetilde{\mathbf{L}} (\dot\Om,\dot
\theta, \dot \om) = (d(2\operatorname{Re} (\partial(\omega^{\dbar \dot
  J^{1,0}}))),0),
$$
which vanishes for type reasons. We next prove that $\tilde{A^*}$ is
elliptic. Ellipticity at steps one, two and five follows from
ellipticity of the complex $C^*$ and the proof of Proposition
\ref{prop:elliptic}.  Ellipticity at step four follows from ellipticity
of $C^*$ and of the complex
$$\Om^{2,2}\lra{d} \Om^{5}\lra{d} \Om^{6}\lra{}0,$$
combined with the formula for the symbol of
$\widetilde{\mathbf{d}}$. We verified the ellipticity of $\tilde{A^*}$
at all steps but one, so it remains to show that an alternated sum of
dimensions vanishes. Given a complex $B^*$ as above and $x\in M$, set
$B_x^j$ to be the fiber at $x$ of the bundle on $M$ whose space of
smooth global sections is $B^j$. With this notation, we need to show
that
\begin{equation}
  \label{eq:alternatesumStrom}
  \sum_j (-1)^j \dim(\tilde{A^j_x})=0.
\end{equation}
By (\ref{eq:T0Psplit}),
\begin{equation}
  \label{eq:Tangentspace}
  \tilde{A^1} \cong C^1\oplus \Om^{1,1}_\RR
\end{equation}
and note that
\begin{equation}
  \label{eq:A0C0}
  \tilde{A^0}\oplus \Om^0\cong C^0.
\end{equation}
By ellipticity of $C^*$, the following sum vanishes:
\begin{equation}
  \label{eq:alternatesumC}
  \sum_j (-1)^j \dim(C_x^j)=0.
\end{equation}
Then, by (\ref{eq:alternatesumC}), (\ref{eq:Tangentspace}) and
(\ref{eq:A0C0}), equation (\ref{eq:alternatesumStrom}) is equivalent
to
\begin{align*}
  \dim (\Lambda^0T_x^*)  = {} & -\dim(\Lambda^{1,1}_\RR T_x^*)+\dim(\Lambda^{2,2}_\RR T_x^*\oplus\Lambda^5T_x^*\oplus \Lambda^6T_x^*)\\
  & -\dim(\Lambda^5T_x^*\oplus\Lambda^6T_x^*)+\dim(\Lambda^6T_x^*),
\end{align*}
which is trivially satisfied. Finally, by (\ref{eq:S13}),
$\widetilde{\mathbf{L}} =0$ is equivalent to $\mathbf{L}=0$, so the
first cohomology of $\tilde{A^*}$ equals the cohomology of
\eqref{eq:abcomplex}.
\end{proof}

\section{Infinitesimal moduli: general
  case}\label{sec:infmoduligeneral}

In this section we construct the infinitesimal moduli space of
solutions for the Strominger system. The main novelty with respect to
the abelian setting is that the symmetries of the system must preserve
the compatibility between the connection $\nabla$ and the
$\U(3)$-structure on the tangent bundle. As a result, the specific
gauge transformations that we consider do not have a group structure,
but nevertheless fit into the more general framework of Lie
groupoid-actions.

\subsection{Notation and parameter space}\label{sec:parameters}

Let $M$ be a compact, oriented, six dimensional smooth manifold.  Let
$P_{\GL^+}$ be the principal $\GL^+(6,\mathbb{R})$-bundle of oriented
frames of $M$. Let $P_K$ be a principal bundle, with compact structure
group $K$. Let $P$ be the principal $G$-bundle given by the fiber
product
$$
P = P_K \times_M P_{\GL^+}
$$
with $G = K \times \GL^+(6,\mathbb{R})$.  We fix a non-degenerate
pairing on the Lie algebra
$$
\mathfrak{g} = \mathfrak{k} \oplus \mathfrak{gl}(6,\RR)
$$ 
of $G$, given by
\begin{equation}\label{eq:pairingc}
  c = 2\alpha'(- \tr_\mathfrak{k} -  c_{\mathfrak{gl}}).
\end{equation}
Here, $- \tr_\mathfrak{k}$ denotes the Killing form on $\mathfrak{k}$
and $c_{\mathfrak{gl}}$ is a non-degenerate invariant metric on
$\mathfrak{gl}(6,\RR)$, which extends the non-degenerate Killing form
$-\tr$ on $\mathfrak{sl}(6,\RR) \subset \mathfrak{gl}(6,\RR)$.

Let $\cA$ denote the space of product connections $\theta = A \times
\nabla$ on $P$, where $A$ is a connection on $P_K$ and $\nabla$ is a
connection on the tangent bundle $T$ of $M$. As in Section
\ref{sec:infabel}, we denote by $\Omega^3_0 \subset \Omega^3_\CC$ the
space of complex $3$-forms $\Omega$ such that \eqref{eq:T01}
determines an almost complex structure $J_\Omega$ on $M$. For our
analysis, we consider a parameter space
$$
\cP \subset \Omega_0^3 \times \cA \times \Omega^2,
$$
defined by
$$
\cP=\lbrace (\Om,\theta,\omega)\;\vert\; \om \text{ is }
J_\Om-\text{compatible} \text{ and } \nabla \text{ is }
(J_\Om,\om)-\text{unitary} \rbrace.
$$
The points in $\cP$ are regarded as unknowns for the system of
equations
\begin{equation}\label{eq:stromnonabelian}
  \begin{split}
    d\Om & =0, \ \ \ \ \ \ \ \ \ \ \ \ \ \ d(\vert\vert \Om \vert \vert_\om \om^2)  =  0,\\
    F_\theta^{0,2} & = 0, \ \ \ \ \ \ \ \ \ \ \ \ \ \ \ \ \ \ \ F_\theta\wedge \omega^2 = 0  ,\\
    dd^c \om- c(F_\theta\wedge F_\theta) & = 0,
  \end{split}
\end{equation}
where $F_\theta$ denotes the curvature of $\theta = A \times \nabla$,
given explicitly by
$$
F_\theta = F_A + R_\nabla \in \Omega^2(\ad P).
$$
Our convention for the curvature tensor is \cite{BerlineGetzler}
$$
F_\theta = - \theta[\theta^\perp \cdot , \theta^\perp \cdot] \in
\Omega^2(\ad P),
$$
where $\theta$ is identified with a bundle morphism $\theta \colon TP
\to VP$, $\theta^\perp := \Id - \theta$ is the projection into the
horizontal subspace and the Bracket denotes the Lie bracket of vector
fields on $P$. Alternatively, we will use the formula
$$
F_\theta = d \theta + \frac{1}{2}[\theta,\theta],
$$
where $\theta$ is regarded as a $G$-invariant $1$-form in $P$ with
values in $\mathfrak{g}$ and the bracket is the one on the Lie
algebra. The induced covariant derivative on the bundle of Lie
algebras $\ad P = P \times_G \mathfrak{g}$ is
$$
i_V d^\theta r = [\theta^\perp V,r],
$$
which satisfies $d^\theta \circ d^\theta = [F_\theta,\cdot]$.

\begin{remark}
  The compatibility between $\nabla$ and the $\SU(3)$-structure on $M$
  is unmotivated from the physics point of view, but it is helpful for
  the mathematics that follow, making our discussion more
  standard. For the physics of the Strominger system it is required
  the weaker assumption that $\nabla$ is compatible with the metric
  underlying the $\SU(3)$-structure.
\end{remark}

Solutions of \eqref{eq:stromnonabelian} are in correspondence with
structures of Calabi-Yau manifold on $M$, endowed with a solution of
the Strominger system \eqref{eq:stromintro}, as it follows from
$$
c(F_\theta\wedge F_\theta) = \alpha'\(\tr R_\nabla \wedge R_\nabla -
\tr_\mathfrak{k} F_A \wedge F_A\).
$$
Note here that the compatibility between $\nabla$ and
$(J_\Omega,\omega)$ reduces the holonomy of $\nabla$ to $\U(3) \subset
\SL(6,\mathbb{R})$, and $c_{|\mathfrak{sl}(6,\mathbb{R})} = - \tr$.

We now proceed to the description of the tangent space of $\cP$. Fix
an element $(\Omega,\theta,\omega)$ of $\cP$ and set $T_0\cP$ to be
the tangent space of $\cP$ at this point.  Differentiating the
compatibility conditions $\nabla J = 0$ and $\nabla \omega = 0$, we
obtain
\begin{equation}
  \label{eq:compatibilityconditionsnonabelian}
  \nabla \dot J + [\dot \nabla,J] = 0, \qquad \nabla \dot \omega - \omega^{\dot \nabla} = 0,
\end{equation}
where $\dot \nabla \in \Omega^1(\End T)$ is the variation of $\nabla$ and
$$
\om^{\dot \nabla} (X,Y)= \om(\dot \nabla X, Y) + \om(X, \dot \nabla Y)
$$
for all $X,Y \in \Om^0(T)$. Then, the non skew-hermitian part of $\dot \nabla$ with respect to
$(J_\Omega,\omega)$ is determined by the variations $\dot J$ and $\dot
\omega$ and we find an isomorphism
\begin{equation}
  \label{eq:T0isomnonabelian}
  T_0\cP  \cong S_1 := \Omega^{3,0} \oplus \Omega^{2,1} \oplus \Omega^1(\ad P_h) \oplus \Omega^{1,1}_\RR,
\end{equation}
where
$$
P_h = P_K \times_M P_{\U(3)},
$$
and $P_{\U(3)} \subset P_{\GL^+}$ is the =$\U(3)$-reduction determined by
the bundle of unitary frames of $(J_\Omega,\omega)$. This isomorphism
is explicitly defined combining \eqref{eq:isoomega} and
\begin{equation}\label{eq:isonabla}
  \dot \nabla = \dot \nabla_h - \nabla \( \frac{1}{2}J \dot J + \frac{1}{4}(\dot \omega J + J\dot \omega) \),
\end{equation}
for $\dot \nabla_h \in \Omega^1(\ad P_{\U(3)})$.  Note that in
(\ref{eq:isonabla}), $\dot \om$ stands for the element of
$\Om^0(\End(T))$ associated to $\dot \om$ via the metric
$g=\om(\cdot,J\cdot)$. We also note that, unlike \eqref{eq:T0Psplit},
the isomorphism $S_1 \to T_0 \cP$ is a differential operator of order
$1$, and hence it is not induced by a bundle isomorphism. This will be
important in what follows.

\subsection{Gauge groupoid and infinitesimal action}
\label{sec:groupoid}
We define now the symmetries that we will use to construct the space
of infinitesimal variations of the Strominger system. Given $g \in
\Aut P$, we denote by $\check g \in \Diff$ the diffeomorphism in the
base that it covers. Consider the groupoid
$$
\ctG\rightrightarrows \cP
$$
given by
$$
\ctG : = \{(g,\Omega,\theta,\omega) \in \Aut P\times \cP\;\vert\;
g^*(J_\Omega,\omega) = \check{g}^*(J_\Omega,\omega)\},
$$
with source and target maps
$$
s (g,x) = x, \qquad t (g,x) = xg,
$$
where $x \in \cP$ and the right action is given by pull-back. We refer
to \cite{mac} for basic definitions on groupoids. The proof of the
following result is straightforward and is therefore omitted.

\begin{lemma}
  \label{lem:groupoidpreservessolutions}
  The action of $\ctG$ on $\cP$ preserves \eqref{eq:stromnonabelian},
  that is, if $x \in \cP$ is a solution of \eqref{eq:stromnonabelian}
  then $xg$ is also a solution for all $(g,x) \in s^{-1}(x)$.
\end{lemma}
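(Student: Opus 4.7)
The plan is to verify directly that each equation of \eqref{eq:stromnonabelian} is preserved by the pullback action, exploiting naturality of exterior calculus and the compatibility hypothesis in the definition of $\ctG$. Fix $(g,x)\in s^{-1}(x)$ with $x=(\Omega,\theta,\omega)$ and $\check g\in\Diff$ the diffeomorphism covered by $g$. Unpacking the right action, one has
\[
xg=(\check g^*\Omega,\,g^*\theta,\,\check g^*\omega),
\]
where $g^*\theta$ denotes the pullback of the product connection on $P$ and $\check g^*$ acts on differential forms on $M$. First I would confirm that $xg$ still lies in $\cP$: the identity $J_{\check g^*\Omega}=\check g^{-1}_*\circ J_\Omega\circ \check g_*$ together with the compatibility assumption $g^*(J_\Omega,\omega)=\check g^*(J_\Omega,\omega)$ ensures that $g^*\theta$ remains unitary with respect to the pulled-back $\U(3)$-structure, while $\check g^*\omega$ is $J_{\check g^*\Omega}$-compatible by naturality of $J$.

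Next, I would go through the six equations in \eqref{eq:stromnonabelian} in turn. Since $d$ commutes with pullback, the closedness $d\Omega=0$ and the conformally balanced condition $d(\|\Omega\|_\omega\omega^2)=0$ are immediate once one observes $\|\check g^*\Omega\|_{\check g^*\omega}=\check g^*\|\Omega\|_\omega$, which is just pointwise naturality of the hermitian norm \eqref{eq:norm3form}. For the bundle conditions, the curvature of the pulled-back connection satisfies $F_{g^*\theta}=g^*F_\theta$ as an $\ad P$-valued two-form, and under the identification of horizontal $\ad P$-valued forms with forms on $M$ this becomes $\check g^*F_\theta$ composed with the fiberwise action of $g$; the two Hermite--Yang--Mills equations $F_\theta^{0,2}=0$ and $F_\theta\wedge\omega^2=0$ are thus preserved because the $(0,2)$-decomposition and the wedge with $\omega^2$ are natural with respect to the compatible complex/hermitian structure on $xg$.

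Finally, for the Bianchi identity, both sides are ordinary differential forms on $M$. The left-hand side transforms as $dd^c\omega\mapsto d\,d^c_{J_{\check g^*\Omega}}(\check g^*\omega)=\check g^*(dd^c\omega)$, using $d^c=Jd J^{-1}$ and naturality of $J$ under pullback. For the right-hand side, the invariance of the pairing $c$ defined in \eqref{eq:pairingc} together with $F_{g^*\theta}=g^*F_\theta$ gives $c(F_{g^*\theta}\wedge F_{g^*\theta})=\check g^*\,c(F_\theta\wedge F_\theta)$. Subtracting the two equalities yields that the Bianchi identity at $xg$ is the $\check g^*$-pullback of the Bianchi identity at $x$, so it holds.

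The only genuine subtlety, and the one place I would write out carefully, is the transformation of $F_\theta$ and of the unitary structure under a general $g\in\Aut P$ that acts non-trivially on the $\GL^+(6,\RR)$-factor; everything else is routine naturality. The compatibility condition $g^*(J_\Omega,\omega)=\check g^*(J_\Omega,\omega)$ is exactly what is needed to realign the fiberwise action of $g$ with the base pullback on the unitary reduction $P_{\U(3)}$, and it makes all the verifications above go through without further hypothesis.
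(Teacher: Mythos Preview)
Your proposal is correct and is precisely the straightforward naturality verification the paper has in mind; indeed, the paper omits the proof entirely, stating just before the lemma that ``the proof of the following result is straightforward and is therefore omitted.'' Your careful check of each equation under pullback, together with the observation that the compatibility condition $g^*(J_\Omega,\omega)=\check g^*(J_\Omega,\omega)$ is what aligns the fiberwise action of $g$ with the base pullback on the $\U(3)$-reduction, fills in exactly the details the authors chose to suppress.
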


We fix an element $(\Omega,\theta,\omega) \in \cP$ and denote $ F =
F_\theta$, $R=R_\nabla$ and $J = J_\Omega$. There is a natural
bijection
\begin{equation}\label{eq:groupbundlefibre}
  s^{-1}(\Omega,\theta,\omega) \cong \Aut P_K \times \cG_{\U(3)},
\end{equation}
where $\cG_{\U(3)}$ denotes the gauge group of the $\U(3)$-structure
$(J,\omega)$. Hence, using the connection $A$ on $P_K$, the vector
space of infinitesimal symmetries is given by
$$
S^0 = \Omega^0(T) \oplus \Omega^0(\ad P_h).
$$

\begin{lemma}\label{lem:infactionnabla}
  The infinitesimal action of $(V,t) \in \Omega^0(T) \oplus
  \Omega^0(\ad P_{\U(3)})$ on $\nabla$ is given by
  \begin{equation}\label{eq:infactionnabla}
    d^\nabla t + d^\nabla(\nabla V) + i_V R
  \end{equation}
  where $d^\nabla(\nabla V)$ denotes the exterior derivative
  $d^\nabla$ on $\Omega^*(\End T)$, induced by $\nabla$, acting on the
  endomorphism $\nabla V \in \Omega^0(\End T)$.
\end{lemma}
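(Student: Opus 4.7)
My plan is to compute the variation of $\nabla$ by Cartan's formula applied to the connection $1$-form on the frame bundle. First I would lift $(V,t)$ to an infinitesimal automorphism of $P$. Under the identification \eqref{eq:groupbundlefibre}, the $P_K$ component of the lift is the $A$-horizontal lift of $V$ plus the fundamental vector field of $t_K$, while the $P_{\GL^+}$ component is the natural lift $\tilde V$ of $V$ plus the fundamental vector field of $t_{\U(3)}$, where $t = t_K \oplus t_{\U(3)}$ under the splitting $\ad P_h = \ad P_K \oplus \ad P_{\U(3)}$. Only the $P_{\GL^+}$ part of the lift affects $\nabla$, so the variation is $\mathcal{L}_{\tilde V + t^*_{\U(3)}}\theta_\nabla$, where $\theta_\nabla$ is the $\mathfrak{gl}(6,\RR)$-valued connection $1$-form on $P_{\GL^+}$.

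The vertical summand yields the usual gauge variation $d^\nabla t_{\U(3)}$, which is what is meant by $d^\nabla t$ in the statement, since the $\ad P_K$ part acts only on $A$. For the remaining piece, Cartan's formula gives
$$\mathcal{L}_{\tilde V}\theta_\nabla = d(\iota_{\tilde V}\theta_\nabla) + \iota_{\tilde V}d\theta_\nabla.$$
The key identity to establish is
$$\iota_{\tilde V}\theta_\nabla = \nabla V,$$
viewed as an $\ad$-equivariant function $P_{\GL^+} \to \mathfrak{gl}(6,\RR)$. This is the classical fact that the natural (kinematic) lift of $V$ differs from the $\nabla$-horizontal lift by the fundamental vector field of $\nabla V \in \Omega^0(\End T)$, because Lie-dragging a frame along $V$ deviates from parallel transport by $\nabla V$ to first order; the cleanest check is in a local $\nabla$-geodesic frame at the reference point.

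Granted this identity, the rest is formal. Using the structure equation $d\theta_\nabla = F_\nabla - \tfrac{1}{2}[\theta_\nabla,\theta_\nabla]$ together with $F_\nabla = R$,
$$\iota_{\tilde V}d\theta_\nabla = i_V R - [\nabla V,\theta_\nabla],$$
while the standard formula $df = d^\nabla f - [\theta_\nabla,f]$ for $\ad$-equivariant functions applied to $f = \iota_{\tilde V}\theta_\nabla$ yields $d(\iota_{\tilde V}\theta_\nabla) = d^\nabla(\nabla V) - [\theta_\nabla,\nabla V]$. Adding the two contributions and using graded antisymmetry of the Lie bracket to cancel $[\theta_\nabla,\nabla V] + [\nabla V,\theta_\nabla]=0$ produces $d^\nabla(\nabla V) + i_V R$, which combined with the gauge contribution gives \eqref{eq:infactionnabla}. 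The main obstacle is pinning down the identification $\iota_{\tilde V}\theta_\nabla = \nabla V$ with its sign and equivariance conventions; once that is settled, everything else is a tidy manipulation with the structure equation.
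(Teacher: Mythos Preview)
Your proposal is correct and takes essentially the same approach as the paper. Both arguments rest on the same key fact: the natural lift $\tilde V$ of $V$ to $P_{\GL^+}$ decomposes as the $\nabla$-horizontal lift $\nabla^\perp V$ plus the vertical vector generated by $\nabla V \in \Omega^0(\End T)$ (your identity $\iota_{\tilde V}\theta_\nabla = \nabla V$ is exactly the paper's formula $\nabla V = dV - \nabla^\perp V$). The paper then simply quotes the standard action formula $\zeta \cdot \nabla = d^\nabla t + i_V R$ for $\zeta = t + \nabla^\perp V$ and applies it with $t$ replaced by $t + \nabla V$, whereas you unpack that same formula from scratch via Cartan's formula and the structure equation; the content is identical. (One small notational slip: in the statement $t$ already lies in $\Omega^0(\ad P_{\U(3)})$, so there is no $t_K$ component to split off, but since you correctly observe that only the $P_{\GL^+}$ part acts on $\nabla$, this does not affect the argument.)
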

\begin{proof}
  For an arbitrary element $\zeta \in \Lie \Aut P_{\GL^+}$ we can write
  uniquely
$$
\zeta = t + \nabla^\perp V,
$$
for suitable $t \in \Omega^0(\ad P_{\GL^+})$ and $V \in \Omega^0(T)$,
where $\nabla^\perp V$ denotes the horizontal lift of $V$ with respect
to $\nabla$. Then, using this splitting, the infinitesimal action of
$\zeta$ on $\nabla$ reads
$$
\zeta \cdot \nabla = d^\nabla t + i_V R.
$$
Here we note that $\ad P_{\GL^+} = \End T$ and $\nabla t$ denotes the
covariant derivative on $\End T$ acting on the endomorphism $t \in
\Omega^0(\End T)$. Finally, given $V \in \Omega^0(T)$, we note that
$dV \colon T \to T(T)$ is the image of $V$ by the Lie algebra morphism
$$
\Omega^0(T) \to \Lie \Aut P_{\GL^+}
$$
induced by the natural inclusion $0 \to \Diff_0 \to \Aut P_{\GL^+}$,
and hence the result follows from the formula
$$
\nabla V = dV - \nabla^\perp V.
$$
\end{proof}

By Lemma \ref{lem:infactionnabla}, the infinitesimal action of $\ctG$
on $(\Omega,\theta,\omega) \in \cP$ reads:
\begin{equation}
  \label{eq:infinitesimalactionnonabelian0}
  (V,r) \cdot (\Omega,\theta,\omega) = (d\iota_{V} \Om, (r + \theta^\perp V)\cdot \theta,  \cL_V \om),
\end{equation}
where $r = s + t \in \Omega^0(\ad P_K \oplus \ad P_{\U(3)})$ and
\begin{equation}
  (r + \theta^\perp V)\cdot \theta = (d^A s + \iota_V F_A, d^\nabla t + d^\nabla(\nabla V) + i_V R).
\end{equation}
Here, $d^A$ denotes the exterior derivative induced by $A$ on
$\Om^*(\ad P_K)$. Using the previous formula and the isomorphism
\eqref{eq:T0isomnonabelian}, we define a differential operator
\begin{equation}
  \label{eq:infinitesimalactionnonabelian}
  \begin{array}{cccc}
    \mathbf{P} : & S^0 & \rightarrow &  S^1\\
    & (V,r) & \mapsto & (d\iota_{V} \Om, \dot \theta_h,  (\cL_V \om)^{1,1}),
  \end{array}
\end{equation}
where
$$
\dot \theta_h = (r + \theta^\perp V)\cdot \theta + \nabla \(
\frac{1}{2}J \dot J + \frac{1}{4}(\dot \omega J + J\dot \omega) \) \in
\Omega^1(\ad P_h),
$$
for $\dot \omega = \cL_V \omega$ and $\dot J$ given by
\eqref{eq:dotJtensor10}. We conclude this section with the following lemma, whose proof is left to the reader.
\begin{lemma}
  \label{lem:symbolPnonabelian}
  The tuples $\mathbf{t} = (2,1,1)$ and $\mathbf{s} = (1,0,0,1)$ form
  a system of orders for $\mathbf{P}$ and the associated leading
  symbol is given by
  \begin{equation}
    \label{eq:symbPT}
    \sigma_{\mathbf{P}}(v)(V,r)=(v\wedge \iota_{V} \Om,v\otimes s,  v \otimes (t + (v \otimes V)_h), (v\wedge\iota_V\om)^{1,1}),
  \end{equation}
  where $v \in T^*\setminus M$ and $(v \otimes V)_h$ is the
  skew-hermitian part of the endomorphism $v \otimes V$.
\end{lemma}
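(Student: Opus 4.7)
My plan is to verify both claims in the lemma by analyzing each of the four components of $\mathbf{P}$ given by \eqref{eq:infinitesimalactionnonabelian} separately, using the bookkeeping of Definition \ref{def:multidegree}. The first component $\mathbf{P}_1(V,r) = d\iota_V\Omega$ is first order in $V$ and independent of $r$, which matches $t_1 - s_1 = 1$, and has leading symbol $v \wedge \iota_V\Omega$, exactly as in the abelian case of Lemma \ref{lem:symbol}. The fourth component $\mathbf{P}_4(V,r) = (\cL_V\omega)^{1,1}$ is handled analogously using $\cL_V = d\iota_V + \iota_V d$: only the first summand is of order one in $V$, giving leading symbol $(v \wedge \iota_V\omega)^{1,1}$ and no contribution from $r$.

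I would then turn to the $\ad P_K$-valued component $\mathbf{P}_2$. From \eqref{eq:infinitesimalactionnonabelian0} this equals $d^A s + \iota_V F_A$, since the correction term $\nabla(\tfrac{1}{2}J\dot J + \tfrac{1}{4}(\dot\omega J + J\dot\omega))$ in the definition of $\dot\theta_h$ takes values in $\End T$ and so has no $\ad P_K$-part. The summand $\iota_V F_A$ is of order zero in $V$, strictly below the allowed order $t_1 - s_2 = 2$, and so it drops out of the leading symbol. The summand $d^A s$ is of order $t_2 - s_2 = 1$ in $s$ with leading symbol $v \otimes s$, matching the claim.

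The main step is the $\ad P_{\U(3)}$-valued component $\mathbf{P}_3$, which by the definition of $\dot\theta_h$ reads
\begin{equation*}
\mathbf{P}_3(V,r) = d^\nabla t + d^\nabla(\nabla V) + i_V R + \nabla\!\left(\tfrac{1}{2}J\dot J + \tfrac{1}{4}(\dot\omega J + J\dot\omega)\right),
\end{equation*}
with $\dot J^{1,0} = 2iT_0^{-1}((d\iota_V\Omega)^{2,1})$ and $\dot\omega = \cL_V\omega$. The term $i_V R$ is of order zero in $V$, below $t_1 - s_3 = 2$, and drops out. The $t$-linear leading part comes from $d^\nabla t$ with symbol $v \otimes t$. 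For the $V$-quadratic leading part, the symbol of $d^\nabla(\nabla V)$ at $v$ is $v \otimes (v \otimes V) \in T^* \otimes \End T$, while the correction term contributes $v \otimes \phi$ for an endomorphism $\phi \in \End T$ depending pointwise and linearly on $V$ via the symbols of $\dot J$ and $\dot\omega$. Because $\mathbf{P}_3$ takes values in $\Omega^1(\ad P_{\U(3)})$ by construction of the isomorphism \eqref{eq:T0isomnonabelian}, the total order-two symbol must lie in $T^* \otimes \ad P_{\U(3)}$; since $\nabla$ is $\U(3)$-compatible and hence preserves the orthogonal decomposition of $\End T$ into its skew-hermitian part $\ad P_{\U(3)}$ and its hermitian-symmetric complement, this constraint forces the correction symbol to cancel precisely the hermitian-symmetric part of $v \otimes V$, leaving the skew-hermitian projection $(v \otimes V)_h$.

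The main obstacle is pinning down this cancellation between $d^\nabla(\nabla V)$ and the correction term at the level of symbols. Rather than a tedious pointwise computation, I would close the argument by invoking the definition: the correction is designed so that $\dot\theta_h$ lands in $\Omega^1(\ad P_{\U(3)})$ for any $(V,r)$, and the uniqueness of the decomposition $\End T = \ad P_{\U(3)} \oplus \ad P_{\U(3)}^\perp$ then forces the leading symbol of the $V$-quadratic part to be $v \otimes (v \otimes V)_h$, completing the verification of both claims.
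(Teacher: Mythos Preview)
Your proof is correct and constitutes exactly the kind of routine verification the paper has in mind; note that the paper itself omits the proof entirely, stating that it ``is left to the reader.'' Your indirect argument for the $\ad P_{\U(3)}$-component---observing that the correction term in \eqref{eq:isonabla} is designed precisely so that $\dot\theta_h$ lands in $\Omega^1(\ad P_h)$, hence the symbol must project $v\otimes V$ onto its skew-hermitian part---is clean and avoids an explicit computation of $\sigma_q(v)$, which is a nice touch.
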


\subsection{Linearization and ellipticity}\label{sec:linnonabelian}

Let $(\Omega,\theta,\omega) \in \cP$ be a solution of
\eqref{eq:stromnonabelian}. Let us denote by $\mathbf{L}$ the
differential at $(\Omega,\theta,\omega)$ of \eqref{eq:stromnonabelian}
with respect to variations in $T_0\cP$. We set
$$S^2=\Omega^{3,1}\oplus \Omega^{2,2} \oplus \Omega^{0,2}(\ad P_h) \oplus \Omega^4 \oplus \Omega^5 \oplus \Omega^6(\ad P_h).$$
\begin{lemma}
  The operator $\mathbf{L}$ takes values in $S^2$ and induces an
  operator
  \begin{equation}
    \label{eq:diffnonabelian}
    \mathbf{L}=\oplus_{i=1}^5 \mathbf{L}_i: S^1 \rightarrow S^2
  \end{equation}
  via \eqref{eq:T0isomnonabelian}, given by
  \begin{equation}\label{eq:diffnonabelian2}
    \begin{split}
      \mathbf{L}_1(\dot \Om,\dot \theta_h,\dot\om^{1,1}) & = d\dot \Om,  \\
      \mathbf{L}_2(\dot \Om,\dot \theta_h,\dot\om^{1,1}) & = \dbar^\theta (\dot \theta_h)^{0,1} + \frac{i}{2}(F^{\dot J})^{0,2},\\
      \mathbf{L}_3(\dot \Om,\dot \theta_h,\dot\om^{1,1}) & = d\(J d \dot \om - J (d\om)^{\dot J J} - 2c(\dot \theta_h \wedge F)\),\\
      \mathbf{L}_4(\dot \Om,\dot \theta_h,\dot\om^{1,1})& = d\(2\vert\vert\Om\vert\vert_{\om}\dot\om\wedge\om+ \delta(\vert\vert \Om\vert\vert_\om) \om^2\),\\
      \mathbf{L}_5(\dot \Om,\dot \theta_h,\dot\om^{1,1}) & = d^\theta
      \dot \theta_h \wedge \om^2 + 2 F \wedge \dot\om\wedge \om,
    \end{split}
  \end{equation}
  where $\dot \omega = \dot \omega^{1,1} + \frac{1}{2}\dot
  \omega^{\dot J J}$, $\dot J$ is the infinitesimal variation of
  almost-complex structure \eqref{eq:dotJtensor10}, $\delta
  (\vert\vert \Om\vert\vert_\om) $ is given by formula
  (\ref{eq:dotnormOmega}), and $\delb^\theta=\pr^{(0,1)}\circ
  d^\theta$ is the Dolbeault operator induced by $\theta$.
\end{lemma}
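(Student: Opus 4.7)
The plan is to linearize each of the five equations in \eqref{eq:stromnonabelian} separately, largely following the abelian computation in Lemma \ref{lem:diff}, and to re-express the results in terms of the variables $(\dot\Omega,\dot\theta_h,\dot\omega^{1,1}) \in S^1$ via the isomorphism \eqref{eq:T0isomnonabelian}. Three modifications are needed relative to the abelian case: the exterior derivative $d$ is replaced by the covariant exterior derivative $d^\theta$; the trace is replaced by the invariant pairing $c$ of \eqref{eq:pairingc}; and the variation $\dot\theta$ decomposes via \eqref{eq:isonabla} as $\dot\theta = \dot\theta_h - \nabla\phi$ with $\phi = \tfrac{1}{2}J\dot J + \tfrac{1}{4}(\dot\omega J + J\dot\omega) \in \Omega^0(\End T)$ taking values in the non-skew-hermitian complement $\ad P_h^\perp$ of $\ad P_h$ in $\ad P$.

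The formulas for $\mathbf{L}_1$ and $\mathbf{L}_4$ depend only on $\dot\Omega$ and $\dot\omega$ and transfer verbatim from Lemma \ref{lem:diff}, as does \eqref{eq:dotnormOmega} for $\delta(\|\Omega\|_\omega)$. Similarly, the linearization of $dd^c\omega$ appearing in $\mathbf{L}_3$ is a pure calculation in $\dot\omega$ and $\dot J$ and is identical to the abelian one. The only new input in $\mathbf{L}_3$ is the variation of the Pontryagin term, $\delta(c(F_\theta\wedge F_\theta)) = 2c(d^\theta\dot\theta\wedge F_\theta)$. To see that only $\dot\theta_h$ contributes, I will use: (i) $F_\theta \in \Omega^2(\ad P_h)$, since $\theta$ is a $K\times\U(3)$-connection; and (ii) the $c$-orthogonality of $\ad P_h$ and $\ad P_h^\perp$. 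Fact (ii) reduces, via \eqref{eq:pairingc}, to the $\tr$-orthogonality of $\mathfrak{u}(3)$ and its complement in $\mathfrak{gl}(6,\RR)$, which is a short computation: skew-symmetric and symmetric matrices are $\tr$-orthogonal, and inside the skew-symmetric part those commuting with $J$ are $\tr$-orthogonal to those anti-commuting with $J$. Combined, these facts yield $c(d^\theta(\nabla\phi)\wedge F_\theta) = 0$, hence $c(d^\theta\dot\theta\wedge F_\theta) = c(d^\theta\dot\theta_h\wedge F_\theta)$.

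For $\mathbf{L}_2$ and $\mathbf{L}_5$, the linearizations of $F_\theta^{0,2}=0$ and $F_\theta\wedge\omega^2=0$ produce $\dbar^\theta\dot\theta^{0,1} + \tfrac{i}{2}(F^{\dot J})^{0,2}$ and $d^\theta\dot\theta\wedge\omega^2 + 2F\wedge\dot\omega\wedge\omega$, by the same computations as in the abelian case (see e.g.\ \cite{GFT} for the first). Along any path in $\cP$ the curvature satisfies $F_{\theta_t}\in\Omega^2(\ad P_{h,t})$ because $\nabla_t$ preserves $(J_{\Omega_t},\omega_t)$, so these equations live in the varying subbundle with fibre $\ad P_h$; the relevant tangent direction is therefore obtained by projecting to $\ad P_h$ at $t=0$. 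Since the decomposition $\ad P = \ad P_h \oplus \ad P_h^\perp$ is reductive (as follows at the Lie algebra level from the same commutation arguments used for (ii) above), $d^\theta$ preserves the splitting, and $-\nabla\phi \in \Omega^1(\ad P_h^\perp)$ contributes only to the $\ad P_h^\perp$-part, which is projected out. This yields the stated formulas, and also confirms that $\mathbf{L}$ lands in $S^2$.

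The main obstacle is establishing the $c$-orthogonality of $\ad P_h$ and its complement and correctly interpreting the target: once the projection to $\Omega^{0,2}(\ad P_h)$ (resp.\ $\Omega^6(\ad P_h)$) is justified via the $\U(3)$-compatibility of $\theta$, the remaining steps are routine adaptations of the abelian computation.
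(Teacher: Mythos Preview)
Your argument is correct but follows a different route from the paper. The paper's proof writes $\dot\nabla = \dot\nabla_h - \nabla q$ with $q = \tfrac{1}{2}J\dot J + \tfrac{1}{4}(\dot\omega J + J\dot\omega)$ and observes directly that $d^\nabla(\nabla q) = d^\nabla d^\nabla q = [R,q]$. For $\mathbf{L}_2$ and $\mathbf{L}_5$ it then uses that, at the fixed solution, $R^{0,2} = 0$ and $R\wedge\omega^2 = 0$, which immediately give $[R,q]^{0,2} = 0$ and $[R,q]\wedge\omega^2 = 0$; for $\mathbf{L}_3$ it uses the algebraic identity $\tr([R,q]\wedge R) = 0$, a consequence of cyclicity (equivalently, $\ad$-invariance) of the pairing. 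No appeal to reductivity or to $c$-orthogonality of $\ad P_h$ and $\ad P_h^\perp$ is needed.

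Your approach instead rests on structural facts: reductivity of $\U(3)\subset\GL(6,\RR)$ (so that $d^\theta$ preserves the splitting $\ad P_h\oplus\ad P_h^\perp$), $c$-orthogonality of the two summands, and, for $\mathbf{L}_2$ and $\mathbf{L}_5$, the observation that along paths in $\cP$ the curvature lies in the moving subbundle $\ad P_{h,t}$, so that the derivative at a zero of the defining map lands in $\ad P_h$. This is valid and has the merit of making the role of the unitary reduction transparent; the paper's computation is shorter because it exploits directly that one is linearizing at a solution (so $R$ is of type $(1,1)$ and primitive) together with invariance of the pairing, bypassing the orthogonality discussion entirely.
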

\begin{proof}
  The computations follow as in the proof of Lemma \ref{lem:diff}. The
  crucial step is to show that the only contribution to $\mathbf{L}
  \colon T_0\cP \to S^2$ in the variation of $F$
$$
d^\theta \dot \theta = d^A \dot A + d^\nabla \dot \nabla
$$
comes from $d^A \dot A + d^\nabla \dot \nabla_h$, where $\dot \theta =
\dot A + \dot \nabla$ and $\dot \theta_h = \dot A + \dot \nabla_h$
(see \eqref{eq:isonabla}). To simplify the exposition, for a moment we
denote
\begin{equation}\label{eq:bJOm}
  q = \frac{1}{2}J \dot J + \frac{1}{4}(\dot \omega J + J\dot \omega)
\end{equation}
so that
$$
\dot \nabla = \dot \nabla_h - \nabla q.
$$
With this notation,
\begin{equation}
  \label{eq:dnablanabla}
  \begin{split}
    d^\nabla \dot \nabla & = d^\nabla \dot \nabla_h -d^\nabla d^\nabla q\\
    & =d^\nabla \dot \nabla_h - [R, q].
  \end{split}
\end{equation}
Then, as $(\Om,\theta,\om)$ is a solution of
\eqref{eq:stromnonabelian}, $R^{0,2}=0$ and $R\wedge \om^2=0$, and
therefore
\begin{align*}
  [R, q]^{0,2} & =0,\\
  [R,q]\wedge\om^2 & =0.
\end{align*}
Jointly with \eqref{eq:dnablanabla}, formulae for $\mathbf{L}_2$ and
$\mathbf{L}_5$ follow.  Finally, to calculate $\mathbf{L}_3$, we note
that
$$
\tr([R,q]\wedge R)=0.
$$
\end{proof}

Consider the following complex of differential operators
\begin{equation}
  \label{eq:nonabcomplex}
  (S^*) \qquad \qquad \qquad S^0 \lra{\mathbf{P}}  S^1  \lra{\mathbf{L}} S^2.
\end{equation}
\begin{theorem}
  \label{prop:ellipticnonabelian}
  The complex (\ref{eq:nonabcomplex}) is elliptic. The space
  of infinitesimal deformations of solutions to the Strominger system
  of equations is defined as the first cohomology group $H^1(S^*)$.
\end{theorem}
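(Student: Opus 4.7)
The plan is to follow the strategy of Proposition \ref{prop:elliptic}, incorporating the novel non-abelian features in a systematic way. First I would verify $\mathbf{L}\circ \mathbf{P}=0$, which is immediate from Lemma \ref{lem:groupoidpreservessolutions}: since the groupoid $\ctG$ preserves solutions of \eqref{eq:stromnonabelian}, the infinitesimal action $\mathbf{P}$ takes values in $\Ker \mathbf{L}$.

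Next I would fix a system of orders. Splitting $\ad P_h = \ad P_K \oplus \ad P_{\U(3)}$ so that $S^0$ decomposes into three components and $S^1$ into four, Lemma \ref{lem:symbolPnonabelian} gives the weights $\mathbf{t}_\mathbf{P}=(2,1,1)$ on $S^0$ and $\mathbf{s}_\mathbf{P}=(1,0,0,1)$ on $S^1$. After a uniform shift, one takes $\mathbf{t}_\mathbf{L}=(2,1,1,2)$ on $S^1$ and chooses $\mathbf{s}_\mathbf{L}$ on $S^2$ accommodating the orders in \eqref{eq:diffnonabelian2}, to allow for the second-order operator $dJd\dot\omega$ appearing in $\mathbf{L}_3|_{\Omega^{1,1}_\RR}$. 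Because the principal symbols of $d^\theta$ and $\dbar^\theta$ agree with those of $d$ and $\dbar$, and because the algebraic terms $(F^{\dot J})^{0,2}$ in $\mathbf{L}_2$ and $2F\wedge\dot\omega\wedge\omega$ in $\mathbf{L}_5$ are strictly below the corresponding $t_j-s_i$ orders, the leading symbols of $\mathbf{L}_i$ take the same form as the abelian expressions \eqref{eq:symbolS}, with $\dot\theta$ replaced by $\dot\theta_h$ and $\ad P_h$-valued outputs where appropriate.

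The core step is exactness of the symbol sequence at $S^1$. Let $(\dot\Omega,\dot\theta_h,\dot\omega^{1,1})\in S^1$ satisfy $\sigma_\mathbf{L}(v)=0$ for some $v\in T^*\setminus M$. From $\sigma_{\mathbf{L}_2}(v)=(v\wedge\dot\theta_h)^{0,2}=0$ and $\sigma_{\mathbf{L}_5}(v)=v\wedge\dot\theta_h\wedge\omega^2=0$, the standard symbol argument for the Hermite--Yang--Mills deformation complex on a balanced manifold (see \cite{kim}, \cite[p.~246]{Kob}) applied to the real bundle $\ad P_h$ yields $\dot\theta_h=v\otimes r_0$ for a unique $r_0\in\Omega^0(\ad P_h)$. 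The ellipticity of the Calabi--Yau complex \eqref{eq:CYinfinitesimal} combined with the isomorphism \eqref{eq:T0isom} then furnishes a unique $V\in T$ with $\dot\Omega=v\wedge\iota_V\Omega$, so that $\dot J$ has the form \eqref{eq:dotJtensor}. The identification $\dot\omega^{1,1}=(v\wedge\iota_V\omega)^{1,1}$ follows verbatim as in the second half of the proof of Proposition \ref{prop:elliptic}, using $\sigma_{\mathbf{L}_3}(v)=0$, $\sigma_{\mathbf{L}_4}(v)=0$ and the type decomposition of real $(1,1)$-forms recorded around \eqref{eq:simplifyom}.

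To produce the preimage under $\sigma_\mathbf{P}(v)$, decompose $r_0=s_0+t_0$ with $s_0\in\Omega^0(\ad P_K)$ and $t_0\in\Omega^0(\ad P_{\U(3)})$, and set $s=s_0$, $t=t_0-(v\otimes V)_h$; by Lemma \ref{lem:symbolPnonabelian}, the pair $(V,s+t)\in S^0$ then maps under $\sigma_\mathbf{P}(v)$ to the prescribed element of $S^1$. Ellipticity thus holds, and finite-dimensionality of $H^1(S^*)$ follows from the standard multi-degree elliptic theory \cite{DN,LM1,LM2}. The main obstacle is precisely the absorption of the order-two contribution $v\otimes(v\otimes V)_h$ in $\sigma_\mathbf{P}$: this term originates from the compatibility \eqref{eq:compatibilityconditionsnonabelian} imposed by the groupoid nature of the symmetries, and reflects the fact, emphasized after \eqref{eq:isonabla}, that the identification $S^1\cong T_0\cP$ is a first-order differential isomorphism rather than a bundle one, making the non-abelian case genuinely richer than the abelian toy model of Section \ref{sec:infmoduliabelian}.
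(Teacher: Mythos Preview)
Your proposal is correct and follows essentially the same route as the paper's own proof: both establish $\mathbf{L}\circ\mathbf{P}=0$ from invariance, then reduce the symbol exactness at $S^1$ to the abelian argument of Proposition~\ref{prop:elliptic} to obtain $\dot\theta_h=v\otimes r_0$, $\dot\Omega=v\wedge\iota_V\Omega$, and $\dot\omega^{1,1}=(v\wedge\iota_V\omega)^{1,1}$, and finally absorb the second-order term $(v\otimes V)_h$ by adjusting the $\ad P_{\U(3)}$-component of the preimage. Your treatment is somewhat more explicit about the system of orders and the absorption step than the paper, which simply records $\dot\nabla_h=v\otimes(t'+(v\otimes V)_h)$ as ``trivial'', but the argument is the same.
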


\begin{proof}
  As the $\ctG$-action preserves solutions of
  \eqref{eq:stromnonabelian}, $\mathbf{L}\circ \mathbf{P}=0$.  Assume
  that $\sigma_\mathbf{L}(v)(\dot\Om,\dot \theta_h,\dot \om^{1,1})=0$,
  with $\dot \theta_h = \dot A + \dot \nabla_h$ (see
  \eqref{eq:bJOm}). Then, arguing as in the proof of Proposition
  \ref{prop:elliptic} we obtain
$$
(\dot \Om,\dot A,\dot \nabla_h, \dot \om^{1,1})=(v\wedge \iota_{V}
\Om,v\otimes s,v\otimes t, (v\wedge\iota_V\om)^{1,1})
$$
for some $V \in T$ and $s + t \in \ad P_h$. Trivially,
$$
\dot \nabla_h = v \otimes (t' + (v \otimes V)_h)
$$
for $t' \in \ad P_h$ and hence the result follows.
\end{proof}

We finish this section extending the complex \eqref{eq:nonabcomplex},
in order to define a space of obstructions to integrability for
infinitesimal deformations of the Strominger system. We follow closely
the method in Section \ref{sec:extensioncomplex}. Define
\begin{align*}
  \tilde{S^0} &:= 
  S^0,\\
  \tilde{S^1} &:= S^1,\\
  \tilde{S^2} &:= \Omega^{3,1}\oplus \Omega^{2,2} \oplus \Omega^{0,2}(\ad P_h) \oplus \Omega^{2,2}_\RR \oplus \Omega^5 \oplus \Omega^6(\ad P_h),\\
  \tilde{S^3} & := \Omega^{3,2}\oplus \Omega^{2,3} \oplus \Omega^{0,3}(\ad P_h) \oplus \Omega^{5} \oplus \Omega^6,\\
  \tilde{S^4} & := \Omega^{3,3} \oplus \Omega^{6},
\end{align*}
and
\begin{equation}\label{eq:SComplex}
  \xymatrix{
    (\tilde S^*) \qquad \qquad 0 \ar[r] & \tilde S^0 \ar[r]^{\mathbf{P}} & \tilde S^1 \ar[r]^{\widetilde{\mathbf{L}}} & \tilde S^2 \ar[r]^{\dbar_0 \oplus \widetilde{\mathbf{d}}} & \tilde S^3 \ar[r]^{\dbar_0 \oplus d}  & \tilde S^4 \ar[r] & 0.
  }
\end{equation}
Here
$$
\dbar_0(\alpha,\beta) = (d \alpha, \dbar^\theta \beta -
F^{T_0^{-1}(\alpha^{2,j})}).
$$
for $\alpha \in \Omega^{3,j-1}\oplus \Omega^{2,j}$ and $\beta \in
\Omega^{0,j}(\ad P_h)$, for $j = 2,3$, and
$$
\widetilde{\mathbf{d}} (\alpha,\beta,\gamma,\delta,\epsilon)= \(d
\(\gamma - 4\operatorname{Re}\( i(\partial
\omega)^{T_0^{-1}(\alpha^{2,2})} + \frac{1}{2}c(\beta\wedge F)\)\),
d\delta\).
$$

The proof of the following result follows easily combining the proof
of Proposition \ref{prop:fullabeliancomplex} with the ellipticity of
Kim's complex \cite{kim} (see also \cite[p. 246]{Kob}).

\begin{theorem}
  \label{theo:ellipticnonabelian}
  The complex $\tilde S^*$ is elliptic and there is a natural
  isomorphism $H^1(\tilde S^*) = H^1(S^*)$. The space of
  obstructions to integrability of infinitesimal deformations of the
  Strominger system of equations is defined as the second cohomology group
  $H^2(\tilde S^*)$ of $\tilde S^*$.
\end{theorem}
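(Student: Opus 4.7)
The plan is to adapt, almost verbatim, the four-step proof of Proposition \ref{prop:fullabeliancomplex}, keeping track of two novel features of the non-abelian setting: the curvature now reads $F_\theta = F_A + R_\nabla$ and is $\ad P$-valued (so the exterior differentials are twisted by $\theta$), and the isomorphism $S^1 \cong T_0\cP$ involves the first-order splitting \eqref{eq:isonabla}. The overall scheme has four steps: verify that $\tilde S^*$ is a complex, verify exactness of the symbol sequence, compute the alternating sum of fibre dimensions, and finally identify the first cohomology with $H^1(S^*)$.

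First I would check that $\tilde S^*$ is indeed a complex. The relation $\widetilde{\mathbf{L}} \circ \mathbf{P} = 0$ follows from Lemma \ref{lem:groupoidpreservessolutions} exactly as in the abelian case. That $\dbar_0 \circ \dbar_0 = 0$ on the $C$-like factors is Huang's argument from \cite{hu} combined with $F_\theta^{0,2} = 0$, and $(\dbar_0 \oplus d)\circ(\dbar_0 \oplus \widetilde{\mathbf{d}}) = 0$ reduces to $d^2 = 0$ on the extra $\Omega^{5}\oplus\Omega^{6}$ slots. The decisive relation is $\widetilde{\mathbf{d}} \circ \widetilde{\mathbf{L}} = 0$, which requires the non-abelian analogue of Lemma \ref{lem:S13}, namely
\begin{equation*}
\mathbf{L}_3^{1,3}(\dot\Om,\dot\theta_h,\dot\om^{1,1}) = -2i\partial\bigl(\omega^{T_0^{-1}(\mathbf{L}_1^{2,2})}\bigr) - 2i(\partial\omega)^{T_0^{-1}(\mathbf{L}_1^{2,2})} - 2c\bigl(\mathbf{L}_2 \wedge F_\theta\bigr).
\end{equation*}
To obtain it I would repeat the local calculation of Lemma \ref{lem:S13}, inserting the variation \eqref{eq:dnablanabla}. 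The contribution of the correction $\dot\nabla - \dot\nabla_h = -\nabla q$ to $d^\theta\dot\theta$ is $-[R,q]$, and this contributes $\tr([R,q]\wedge R_\nabla) = 0$ in the Pontryagin term inside $\mathbf{L}_3$ (by invariance of the pairing) and $[R,q]^{0,2} = [R,q]\wedge\omega^2 = 0$ in $\mathbf{L}_2$ and $\mathbf{L}_5$ (since $R^{0,2}=0$ and $R\wedge\omega^2=0$). With these vanishings the structure of the $(1,3)$-part of $\mathbf{L}_3$ is identical to the abelian case after substituting $F$ by $F_\theta$ and $\dot\theta$ by $\dot\theta_h$, which is exactly the identity needed for $\widetilde{\mathbf{d}} \circ \widetilde{\mathbf{L}} = 0$.

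Next I would verify ellipticity by symbol analysis. Exactness at positions 1 and 2 is Theorem \ref{prop:ellipticnonabelian}. At position 5, the $\ad P_h$-twisted Dolbeault factor is elliptic by Kim's theorem \cite{kim} (cf.\ \cite[p.~246]{Kob}), while the $\Omega^5\to\Omega^6$ factor is elliptic by the de Rham complex, and these combine cleanly because the two blocks do not mix at the level of principal symbols. At position 4 one checks that the subleading terms inside $\widetilde{\mathbf{d}}$---those built from $(\partial\omega)^{T_0^{-1}(\alpha)}$ and $\beta\wedge F$---drop out of the principal symbol, so the symbol computation reduces to the abelian one of Proposition \ref{prop:fullabeliancomplex} stacked with Kim's symbol. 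Ellipticity at the remaining position 3 then follows from the standard Euler-characteristic argument: one expresses the fibres of $\tilde S^j$ in terms of fibres of $C^j$ and $\Omega^{k}$-factors, uses $\sum(-1)^j\dim C^j_x = 0$, and verifies that the extra dimensions cancel in pairs exactly as in \eqref{eq:alternatesumStrom}.

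Finally, the identification $H^1(\tilde S^*) = H^1(S^*)$ is a bookkeeping step: the two operators $\mathbf{L}$ and $\widetilde{\mathbf{L}}$ coincide except that $\mathbf{L}_3$ has been truncated to its $(2,2)$-part; but by the $\mathbf{L}_3^{1,3}$ identity above, the missing component is already a linear combination of $\mathbf{L}_1$ and $\mathbf{L}_2$, and the $(3,1)$-component is its conjugate. Hence $\ker\widetilde{\mathbf{L}} = \ker\mathbf{L}$ inside $S^1$, while the image of $\mathbf{P}$ is the same in both complexes, giving the asserted isomorphism. The main obstacle is the derivation of the $\mathbf{L}_3^{1,3}$ formula: one must carefully unravel the first-order identification \eqref{eq:isonabla}, argue that each unwanted $[R,q]$-type term is killed by the equations satisfied at the reference solution, and confirm that the resulting structure is compatible with the chosen definition of $\widetilde{\mathbf{d}}$. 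Once this is done, the rest is a faithful translation of the abelian argument.
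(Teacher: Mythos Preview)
Your proposal is correct and follows essentially the same approach as the paper, which simply states that the result follows by combining the proof of Proposition \ref{prop:fullabeliancomplex} with the ellipticity of Kim's complex \cite{kim}. Your write-up is a faithful and detailed unpacking of precisely that combination, including the non-abelian analogue of Lemma \ref{lem:S13} and the use of the $[R,q]$-vanishings already established in the proof of \eqref{eq:diffnonabelian2}.
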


\section{Anomaly cancellation, flux quantization and
  generalized~geometry}
\label{sec:anomalyflux}

The aim of this section is to initiate the study of the geometry of
the moduli space of solutions of the Strominger system which emerges
from the Bianchi identity. We describe the infinitesimal structure of
a natural foliation in the moduli space, whose leaves are intimately
related to generalized geometry.

\subsection{An integrable distribution in the moduli space}

This prelude intends to serve as motivation for the rest of this
section. Our discussion is to be taken rather formally, since we do
not want to get involved here with the differential-topological
aspects of the problem. In particular, the construction of a natural
differentiable structure on the moduli space of solutions of the
Strominger system 
will be addressed in a sequel of the present work.

Following the notation of Section \ref{sec:infmoduligeneral},
the 
moduli space of solutions of the Strominger system can be identified
with the quotient
$$
\mathcal{M} = \cP_{S} / \cX,
$$
where $\cP_S \subset \cP$ is the locus where the equations
\eqref{eq:stromnonabelian} are satisfied. Using the transgression
formula for the Chern--Simons $3$-form, given any point
$(\Omega,\theta,\omega)$ in $\mathcal{M}$ we define a
$H^3(M,\RR)$-valued function
$$
\vartheta(\Omega',\theta',\omega') = [d^{c'}\omega' - d^c\omega - 2c(a
\wedge F) - c(a \wedge d^\theta a) - \frac{1}{3}c(a \wedge [a,a])],
$$
where $\theta' = \theta + a$. This function is well-defined up to the
action of the equivariant mapping class group of the principal bundle
$P$ and hence it defines a global, closed, $H^3(M,\RR)$-valued
$1$-form
$$
\delta \in \Omega^1(\cM,H^3(M,\RR)),
$$
induced by the explicit expression
$$
\delta(\dot \Omega,\dot \theta, \dot \omega) = [J d \dot \om - J
(d\om)^{\dot J J} - 2c(\dot \theta_h \wedge F)].
$$
Neglecting obstructions to integrability, the tangent space of $\cM$
at a point can be identified with the cohomology group $H^1(S^*)$ and
the closed $1$-form $\delta$ induces now an integrable distribution
\begin{equation*}
  \xymatrix{
    0 \ar[r] & \operatorname{Ker} \delta \ar[r] & H^1(S^*) \ar[r]^{\delta} & H^3(M,\RR)\\
  }
\end{equation*}
and therefore a foliation on the moduli space. The leaves of this
foliation are closely related to generalized geometry, as we will
rigorously show in this section at the infinitesimal level.

Motivation for our construction comes from the \emph{Green-Schwarz
  mechanism} \cite{GreenSchwarz} and the \emph{flux quantization
  condition}, two basic principles in the physics of the heterotic
string. On the one hand, physics claims that the fundamental equation
relating the unitary structure on the manifold with the pair of
connections $\nabla$ and $A$ is a local equation for $3$-forms, the
so-called \emph{Green-Schwarz anomaly cancellation condition} (or
\emph{anomaly equation}, for short)
\begin{equation}\label{eq:anomaly}
  H = db - \alpha'(CS(\nabla) - CS(A)),
\end{equation}
written in terms of the Chern--Simons $3$-forms of $A$ and $\nabla$,
where $H := d^c\omega$ is the \emph{$3$-form flux}. This requires the
introduction of an additional ingredient: a local real $2$-form
$b$---usually known as \emph{B-field potential}. The Bianchi identity
\eqref{eq:bianchiintro} is obtained by taking the exterior derivative
in \eqref{eq:anomaly}.  On the other hand, the second principle says
that the `closed part' of the flux $H$, obtained by gluing the local
closed $3$-forms $db$, is quantized (see e.g. \cite{AGS,OssaSvanes}),
giving a fixed integral cohomology class in
$H^3(M,\ZZ)$. 
Although the Green-Schwarz mechanism can be rigorously understood in topological grounds using the theory of \emph{differential string structures} \cite{SSS}, here we shall focus on its interpretation in terms of geometry in the moduli space. In this setup, level sets of the multi-valued function $\vartheta$ correspond formally to solutions of the anomaly equation \eqref{eq:anomaly}, while the kernel of the $1$-form $\delta$ can be interpreted as infinitesimal variations which preserve the flux quantization.



\subsection{Constraining infinitesimal variations: abelian
  case}\label{sec:constraining}

To simplify the exposition and highlight the main ideas, we start our
discussion with the abelian case studied in Section
\ref{sec:infmoduliabelian}. 
Let $H^1(A^*)$ be the finite-dimensional vector space of infinitesimal
variations of the given solution $(\Omega,\theta,\om)$, constructed in Section
\ref{sec:infmoduliabelian}. We note from \eqref{eq:diff} that there is
a natural linear map
\begin{equation}\label{eq:fluxmapab}
  \delta \colon  H^1(A^*) \to H^3(M,\RR)
\end{equation}
given by
$$
[(\dot \Om,\dot \theta,\dot\om)] \mapsto [J d \dot \om - J
(d\om)^{\dot J J} - 2c (\dot \theta \wedge F)],
$$
and we denote by $H^1(\mathring{A}^*) : = \Ker \delta$. For an element
$[(\dot \Om,\dot \theta,\dot\om)]$ of $H^1(\mathring{A}^*)$, the
equation $\mathbf{L}_3((\dot \Om,\dot \theta,\dot\om)) = 0$ is
satisfied as a consequence of the stronger condition
\begin{equation}\label{eq:anomalyvariationab}
  J d \dot \om - J (d\om)^{\dot J J} - 2c (\dot \theta\wedge F) = db,
\end{equation}
for a $2$-form $b \in
\Omega^2$. 

Being a fundamental parameter space, we would like to identify
precisely the objects that $H^1(\mathring{A}^*)$ parameterizes. In a
first approximation, it is natural to ask whether
$H^1(\mathring{A}^*)$ arises naturally as the cohomology of a complex
of geometric origin. We give next an affirmative answer to this
preliminary question, but somehow in an unexpected way: to construct
the complex, we need to enlarge the first two vector spaces in
\eqref{eq:abcomplex} (or \eqref{eq:AComplex}) by adding the space of
$2$-forms $\Omega^2$. The $2$-forms become part of the symmetries of
the problem, but also contribute as additional data for the geometric
objects parameterized by $H^1(\mathring{A}^*)$.  Consider the
following sequence
\begin{equation}
  \label{eq:abcircComplex0a}
  A^0 \oplus \Omega^2 \lra{\mathring{\mathbf{P}}} A^1 \oplus \Omega^2 \lra{\mathring{\mathbf{L}}} \mathring{\mathcal{R}},
\end{equation}
where we follow the notation of Section
\ref{sec:extensioncomplex}. Here,
$$
\mathring{\mathcal{R}} := \Omega^{3,1} \oplus \Omega^{2,2} \oplus
\Omega^{0,2}(i\RR) \oplus \Omega^3 \oplus \Omega^5 \oplus
\Omega^6(i\mathbb{R}),
$$
and the maps $\mathring{\mathbf{P}}$ and $\mathring{\mathbf{L}}$ are
defined by
\begin{align*}
  \mathring{\mathbf{P}}(V,r,B) = (\mathbf{P}(V,r),-B)
\end{align*}
and
$$
\mathring{\mathbf{L}} = \mathbf{L}_1 \oplus \mathbf{L}_2 \oplus
\mathring{\mathbf{L}}_3 \oplus \mathbf{L}_4 \oplus \mathbf{L}_5,
$$
where
$$
\mathring{\mathbf{L}}_3(\dot \Om,\dot \theta,\dot\om,b) = J d \dot
\om - J (d\om)^{\dot J J} - 2c (\dot \theta\wedge F) - db.
$$
The main difference with \eqref{eq:abcomplex} is that we have
decreased by one the degree of the original map $\mathbf{L}_3$ in such
a way that $\mathring{\mathbf{L}}_3 = 0$ is precisely the equation
\eqref{eq:anomalyvariationab}. Unlike \eqref{eq:abcomplex}, equation
\eqref{eq:abcircComplex0a} is a sequence of differential operators of
degree $1$, but it is not a complex.

\begin{lemma}
  The sequence \eqref{eq:abcircComplex0a} defines a complex by
  restriction of the first arrow to elements $(V,r,B) \in A^0 \oplus
  \Omega^2$ satisfying the equation
  \begin{equation}\label{eq:complexconditionab}
    \cL_V (d^c\om) = 2c((dr + \iota_V F)\wedge F) - dB.
  \end{equation}
\end{lemma}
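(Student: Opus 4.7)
My plan is to verify directly that $\mathring{\mathbf{L}} \circ \mathring{\mathbf{P}}$ vanishes on the subspace cut out by \eqref{eq:complexconditionab}. Since $\mathring{\mathbf{P}}(V,r,B) = (\mathbf{P}(V,r),-B)$ and the components $\mathring{\mathbf{L}}_i = \mathbf{L}_i$ for $i \in \{1,2,4,5\}$ do not involve the $b$-slot at all, the identities $\mathbf{L}_i \circ \mathbf{P} = 0$ already established in the proof of Proposition \ref{prop:elliptic} immediately yield $\mathring{\mathbf{L}}_i \circ \mathring{\mathbf{P}} = 0$ for these four indices, with no restriction needed on $(V,r,B)$. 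The only component that demands genuine attention is $\mathring{\mathbf{L}}_3$.

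Substituting $\mathring{\mathbf{P}}(V,r,B)$ into $\mathring{\mathbf{L}}_3$ and using \eqref{eq:abelianinfinaction} gives
\begin{equation*}
\mathring{\mathbf{L}}_3(\mathring{\mathbf{P}}(V,r,B)) = J\,d(\cL_V\om) - J(d\om)^{\dot J J} - 2c\bigl((\iota_V F + dr)\wedge F\bigr) + dB,
\end{equation*}
where $\dot J$ is the variation of almost complex structure induced by $\dot\Omega = d\iota_{V^{1,0}}\Om$ through \eqref{eq:dotJtensor10}. A short check, exactly analogous to the one carried out in the proof of Lemma \ref{lem:diff}, identifies this $\dot J$ with $\cL_V J$: indeed, $\cL_V\Om = d\iota_V\Om = d\iota_{V^{1,0}}\Om$ already lies in $T_\Omega\Omega^3_0$ because $\Omega$ is of type $(3,0)$, so the tangent vector produced by $\mathbf{P}$ agrees with the infinitesimal variation of $(\Omega,\om,J)$ under the flow of $V$. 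The key observation is then that the operator $d^c = J d J^{-1}$ is natural with respect to diffeomorphisms; linearising it along the flow of $V$ (with $\dot\om = \cL_V\om$ and $\dot J = \cL_V J$) gives
\begin{equation*}
J\,d(\cL_V\om) - J(d\om)^{\dot J J} = \cL_V(d^c\om).
\end{equation*}

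Combining these, the composition reduces to
\begin{equation*}
\mathring{\mathbf{L}}_3(\mathring{\mathbf{P}}(V,r,B)) = \cL_V(d^c\om) - 2c\bigl((dr + \iota_V F)\wedge F\bigr) + dB,
\end{equation*}
which vanishes precisely when $(V,r,B)$ satisfies \eqref{eq:complexconditionab}. Restricting the first arrow to this subspace therefore converts \eqref{eq:abcircComplex0a} into a complex. Conceptually there is no genuine obstacle: \eqref{eq:complexconditionab} is visibly the infinitesimal form of the Green--Schwarz anomaly equation, with $B$ playing the role of a shift of the $B$-field potential, and the verification above simply records the equivariance of the anomaly equation under the enlarged symmetries $(V,r,B)$. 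The only delicate point in the bookkeeping is keeping careful track of the sign coming from the shift $b \mapsto -B$ in $\mathring{\mathbf{P}}$, which is what turns the $-db$ in $\mathring{\mathbf{L}}_3$ into the $-dB$ on the right-hand side of \eqref{eq:complexconditionab}.
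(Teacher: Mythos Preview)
Your argument is correct and follows precisely the approach the paper indicates: the paper states that the proof ``follows from a direct calculation of $\mathring{\mathbf{L}}_3 \circ \mathring{\mathbf{P}}$ and is therefore omitted,'' and you have supplied exactly that calculation, including the key identification $J\,d(\cL_V\om) - J(d\om)^{\dot J J} = \cL_V(d^c\om)$ via naturality of $d^c$.
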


The proof follows from a direct calculation of
$\mathring{\mathbf{L}}_3 \circ \mathring{\mathbf{P}}$ and is therefore
omitted.  Note that using the \emph{Bianchi identity} (last equation
in \eqref{eq:stromabelian}), we can rewrite
\eqref{eq:complexconditionab} as
\begin{equation}\label{eq:complexconditionsimplab}
  d(i_V d^c\om - 2c r F) = - dB.
\end{equation}
A fact worth mentioning is that the space of $(V,r,B)$ satisfying
\eqref{eq:complexconditionab} has a natural Lie algebra structure. We
will come back to this in Section \ref{sec:GGstrom}. By now, we use
the suggestive notation
\begin{equation}\label{eq:LieAutEab}
  \operatorname{Lie} \mathring{\operatorname{Aut} E} := \{(V,r,B) \in A^0 \oplus \Omega^2 \;\; \textrm{satisfying} \; \; \eqref{eq:complexconditionab}\}.
\end{equation}

\begin{lemma}\label{lem:mathringA}
  The cohomology of the complex of vector spaces
  \begin{equation}
    \label{eq:abcircComplex}
    (\mathring{A}^*) \qquad \qquad \operatorname{Lie} \mathring{\operatorname{Aut} E} \lra{\mathring{\mathbf{P}}} A^1 \oplus \Omega^2 \lra{\mathring{\mathbf{L}}} \mathring{\mathcal{R}}
  \end{equation}
  is naturally isomorphic to $H^1(\mathring{A}^*)$. Consequently, the
  cohomology of $\mathring{A}^*$ is finite dimensional.
\end{lemma}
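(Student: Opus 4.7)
The plan is to produce a natural isomorphism
\[
\bar\Phi \colon H^1(\mathring{A}^*) \;\xrightarrow{\ \sim\ }\; \Ker \delta
\]
by forgetting the $2$-form component. Given $(\dot\Omega,\dot\theta,\dot\omega,b) \in \Ker \mathring{\mathbf{L}}$, I would define $\Phi(\dot\Omega,\dot\theta,\dot\omega,b) := (\dot\Omega,\dot\theta,\dot\omega)$. The components $\mathbf{L}_i$ for $i \neq 3$ coincide with $\mathring{\mathbf{L}}_i$, and applying $d$ to the identity $\mathring{\mathbf{L}}_3(\dot\Omega,\dot\theta,\dot\omega,b) = 0$ yields $\mathbf{L}_3(\dot\Omega,\dot\theta,\dot\omega) = 0$ since $d\circ d = 0$. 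Thus $\Phi$ takes values in $\Ker\mathbf{L}$, and the equation $J d\dot\omega - J(d\omega)^{\dot J J} - 2c(\dot\theta\wedge F) = db$ shows that the resulting class lies in $\Ker \delta$, as defined in \eqref{eq:fluxmapab}.

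Next, I would check that $\Phi$ descends to cohomology. An element of $\operatorname{Im}\mathring{\mathbf{P}}$ has the form $(\mathbf{P}(V,r),-B)$ for some $(V,r,B) \in \operatorname{Lie}\mathring{\operatorname{Aut} E}$, whose image under $\Phi$ is $\mathbf{P}(V,r) \in \operatorname{Im}\mathbf{P}$, so $\Phi$ induces $\bar\Phi$. Surjectivity is then straightforward: given $[(\dot\Omega,\dot\theta,\dot\omega)] \in \Ker\delta$, the cohomology class of $Jd\dot\omega - J(d\omega)^{\dot J J} - 2c(\dot\theta\wedge F)$ is trivial, so this $3$-form equals $db$ for some $b\in\Omega^2$, producing a lift $(\dot\Omega,\dot\theta,\dot\omega,b) \in \Ker\mathring{\mathbf{L}}$.

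The step I expect to be the main obstacle is injectivity, where the constraint \eqref{eq:complexconditionab} enters essentially. Suppose $\Phi(\dot\Omega,\dot\theta,\dot\omega,b)$ represents the trivial class in $H^1(A^*)$, so $(\dot\Omega,\dot\theta,\dot\omega) = \mathbf{P}(V,r)$ for some $(V,r) \in A^0$. I would set $B = -b$, so that automatically $\mathring{\mathbf{P}}(V,r,B) = (\dot\Omega,\dot\theta,\dot\omega,b)$, and reduce the problem to verifying $(V,r,-b) \in \operatorname{Lie}\mathring{\operatorname{Aut} E}$. For this, the key observation is that when $(\dot\Omega,\dot\theta,\dot\omega) = \mathbf{P}(V,r)$, naturality of the operator $d^c$ under diffeomorphisms gives the identity $J d\dot\omega - J(d\omega)^{\dot J J} = \cL_V(d^c\omega)$, and together with $\dot\theta = dr + \iota_V F$ the equation $\mathring{\mathbf{L}}_3(\dot\Omega,\dot\theta,\dot\omega,b) = 0$ rewrites as
\[
\cL_V(d^c\omega) = 2c\bigl((dr + \iota_V F)\wedge F\bigr) + db,
\]
which is precisely \eqref{eq:complexconditionab} with $B = -b$. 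Finite-dimensionality of $H^1(\mathring{A}^*)$ is then inherited from that of $H^1(A^*)$, established in Proposition \ref{prop:elliptic}, via the injection $\bar\Phi$.
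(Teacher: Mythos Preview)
Your proof is correct and follows essentially the same approach as the paper's: both define the forgetful map $\Phi$ on cohomology, check surjectivity by lifting with a primitive $b$ of the exact $3$-form, and obtain injectivity by showing that $\mathring{\mathbf{L}}_3(\mathbf{P}(V,r),b)=0$ forces the triple to lie in $\operatorname{Lie}\mathring{\operatorname{Aut} E}$. Your version is in fact slightly more careful, as you track the sign $B=-b$ correctly and make explicit the identity $Jd\dot\omega - J(d\omega)^{\dot J J}=\cL_V(d^c\omega)$ that underlies the injectivity step.
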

\begin{proof}
  For a moment, denote by $H^1$ the cohomology of
  \eqref{eq:abcircComplex}. Then, we have a natural linear map $\Phi
  \colon H^1 \to H^1(\mathring{A}^*)$ given by sending a class $[(\dot
  \Om,\dot \theta,\dot\om,b)]$ in $H^1$ to the class $[(\dot \Om,\dot
  \theta,\dot\om)]$ in $H^1(\mathring{A}^*)$. By definition of
  $H^1(\mathring{A}^*)$, this map is trivially surjective. Assume now
  that $[(\dot \Om,\dot \theta,\dot\om,b)] \in H^1$ is such that
  $[(\dot \Om,\dot \theta,\dot\om)] = 0$ in
  $H^1(\mathring{A}^*)$. Then, $(\dot \Om,\dot \theta,\dot\om) =
  \mathbf{P}(V,r)$ and from the condition
$$
\mathring{\mathbf{L}}_3(\mathbf{P}(V,r),b) = 0
$$
it follows that $(V,r,b) \in \operatorname{Lie}
\mathring{\operatorname{Aut} E}$, proving that the map $\Phi$ is
injective.
\end{proof}

Note that the complex \eqref{eq:abcircComplex} is not a complex of
differential operators in the standard sense. This is due to the fact
that $\operatorname{Lie} \mathring{\operatorname{Aut} E}$ is described
by the differential equation \eqref{eq:complexconditionab} and hence
it does not correspond to the space of sections of any vector
bundle. Therefore, the theory of elliptic operators does not apply to
\eqref{eq:abcircComplex}, and there is no direct way of proving that
its cohomology is finite dimensional. We obtain this result by
comparison with the elliptic complex \eqref{eq:abcomplex}.

\subsection{Weakening the symmetries: the elliptic complex $\widehat
  A^*$}
\label{sec:hatAstar}

Leaving aside for a moment the geometric interpretation of the
parameter space $H^1(\mathring{A}^*)$, we would like to construct a
related complex $\widehat A^*$
which 
is, unlike $\mathring{A}^*$, an elliptic complex of differential
operators in the standard sense. Furthermore, we will construct
$\widehat A^*$ using degree $1$ operators, and therefore simplifying
the complex $A^*$. The cohomology of this new complex, constructed by
weakening the symmetries of $\mathring{A}^*$, will fit into an exact
diagram of linear maps
\begin{equation*}
  \xymatrix{
    & 0  \ar[d]   &  &\\
    & H^2(M,\RR)  \ar[d]  &  &\\
    & H^1(\widehat{A}^*)  \ar[d] & & & \\
    0 \ar[r] & H^1(\mathring{A}^*) \ar[d] \ar[r] & H^1(A^*) \ar[r]^\delta & H^3(M,\RR). \\
    & 0 &  & 
  }
\end{equation*}
In the way, we will pin down some of the additional ingredients that
we need to describe $H^1(\mathring{A}^*)$.  Consider the vector bundle
$$
E = T \oplus i \RR \oplus T^*
$$
and note that $\Omega^0(E) := A^0 \oplus \Omega^1$. We define a
sequence of differential operators
\begin{equation}
  \label{eq:circComplexinnerab}
  (\widehat{A}^*) \qquad \qquad \Omega^0(E) \lra{\mathring{\mathbf{P}}} A^1 \oplus \Omega^2 \lra{\mathring{\mathbf{L}}} \mathring{\mathcal{R}},
\end{equation}
where the map $\mathring{\mathbf{P}}$ is now defined by
\begin{align*}
  \mathring{\mathbf{P}}(V,r,\xi) = (\mathbf{P}(V,r),d\xi +
  i_V(d^c\omega) - 2c r F).
\end{align*}
A straightforward calculation shows that

\begin{equation}\label{eq:innerouterab}
  (V,r, - d\xi - i_V(d^c\omega) + 2c r F) \in \operatorname{Lie} \mathring{\operatorname{Aut} E}
\end{equation}
and therefore \eqref{eq:circComplexinnerab} is a complex of
differential operators.

\begin{proposition}\label{prop:Scircexactseqab}
  The sequence \eqref{eq:circComplexinnerab} is an elliptic complex of
  differential operators of degree $1$. There is an exact sequence
  \begin{equation*}
    0 \to H^2(M,\RR) \to H^1( \widehat A^*) \to H^1(\mathring{A}^*) \to 0
  \end{equation*}
  where $H^1( \widehat A^*)$ denotes the cohomology of
  \eqref{eq:circComplexinnerab}.
\end{proposition}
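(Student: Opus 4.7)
First I would check that $\widehat A^*$ is a complex. The identities $\mathbf{L}_i\circ \mathbf{P}=0$ for $i=1,2,4,5$ are already provided by Proposition~\ref{prop:elliptic}, so the only new identity to verify is $\mathring{\mathbf{L}}_3\circ \mathring{\mathbf{P}}_{\widehat A^*}=0$, which reduces, upon expanding $\cL_V = d\iota_V + \iota_V d$ on $d^c\om$, to the Bianchi identity $dd^c\om = cF\wedge F$. For ellipticity I would compute the principal symbols: $\mathring{\mathbf{P}}_{\widehat A^*}$ has symbol $\sigma(v)(V,r,\xi)=(v\wedge \iota_V\Om,\, vr,\, (v\wedge \iota_V\om)^{1,1},\, v\wedge \xi)$, while the new degree-one $\mathring{\mathbf{L}}_3$ has symbol $J(v\wedge \dot\om)-v\wedge b$ (the other four components of $\mathring{\mathbf{L}}$ being as in Lemma~\ref{lem:symbol}). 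Given a tuple killed by $\sigma_{\mathring{\mathbf{L}}}(v)$, wedging $v$ against the third-component equation yields $v\wedge J(v\wedge\dot\om)=0$, which is precisely the symbol equation for $\mathbf{L}_3$ from Lemma~\ref{lem:symbol}. The argument in the proof of Proposition~\ref{prop:elliptic} then reconstructs $(V,r)$ with $\dot\Om = v\wedge \iota_V\Om$, $\dot\theta=vr$ and $\dot\om = v\wedge \iota_V\om$. Substituting back forces $v\wedge b = J(v\wedge v\wedge \iota_V\om) = 0$, and the exactness of wedge-with-$v$ on the exterior algebra gives $b = v\wedge \xi$ for some $\xi$, exhibiting the tuple as $\sigma_{\mathring{\mathbf{P}}_{\widehat A^*}}(v)(V,r,\xi)$.

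\textbf{Comparison with $\mathring A^*$.} For the short exact sequence I would exploit the natural morphism of complexes $\widehat A^*\to \mathring A^*$ which is the identity in degrees $1$ and $2$ and in degree zero is $\tau\colon \Om^0(E)\to \operatorname{Lie}\mathring{\operatorname{Aut} E}$, $\tau(V,r,\xi)=(V,r,-d\xi - \iota_V(d^c\om) + 2crF)$. The target lies in $\operatorname{Lie}\mathring{\operatorname{Aut} E}$ by \eqref{eq:innerouterab}, and the factorisation $\mathring{\mathbf{P}}_{\mathring A^*}\circ \tau = \mathring{\mathbf{P}}_{\widehat A^*}$ is immediate from the definitions. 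The induced $\Phi\colon H^1(\widehat A^*)\to H^1(\mathring A^*)$ is manifestly surjective, as the two complexes share the same degree-one cycle space $\ker \mathring{\mathbf{L}}$. To identify $\ker\Phi$, I would observe that for any $(V,r,B)\in\operatorname{Lie}\mathring{\operatorname{Aut} E}$ the defining constraint \eqref{eq:complexconditionab} combined with the Bianchi identity implies that the $2$-form $B' := B + \iota_V(d^c\om) - 2crF$ is closed, yielding the splitting $(V,r,B) = \tau(V,r,0) + (0,0,B')$. Hence the cokernel of $\tau$ is canonically $\Om^2_{\mathrm{closed}}/d\Om^1 = H^2(M,\RR)$, and pushing forward by $\mathring{\mathbf{P}}_{\mathring A^*}$ produces the natural map $H^2(M,\RR)\to H^1(\widehat A^*)$, $[B']\mapsto [(0,-B')]$, whose image coincides with $\ker\Phi$.

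\textbf{Main obstacle.} The most delicate step is proving injectivity of $H^2(M,\RR)\hookrightarrow H^1(\widehat A^*)$. Unwinding definitions, a class $[B']$ lies in the kernel precisely when the closed $2$-form $\iota_V(d^c\om) - 2crF$ is exact for some infinitesimal symmetry $(V,r)\in\ker\mathbf{P}$ of the background solution. Closedness is automatic from $V$ preserving $(\Om,\om)$, the pair $(V,r)$ preserving $\theta$, and the Bianchi identity. Exactness is the geometrically non-trivial point: I would establish it by exploiting the Killing and holomorphic character of $V$ together with the moment-map-type identity $dr = -\iota_V F$, invoking a Hodge-type splitting adapted to the conformally balanced structure to construct an explicit primitive. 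Once this is in place, the exact sequence $0\to H^2(M,\RR)\to H^1(\widehat A^*)\to H^1(\mathring A^*)\to 0$ follows immediately from the surjectivity of $\Phi$ together with the identification of $\ker\Phi$ above.
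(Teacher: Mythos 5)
Your ellipticity argument and the structural comparison are sound and follow the same route as the paper: the symbol computation reduces, exactly as you say, to Proposition~\ref{prop:elliptic} together with the observation that the residual equation forces $v\wedge b=0$, and your identification of the cokernel of the chain map $\tau\colon\Omega^0(E)\to\operatorname{Lie} \mathring{\operatorname{Aut} E}$ with closed $2$-forms modulo exact ones is precisely the content of the paper's assignment $[(\mathbf{P}(V,r),b)]\mapsto[b-\iota_V(d^c\om)+2crF]$. Your packaging via the morphism of complexes is, if anything, cleaner.

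The genuine problem is your final step, and it cannot be repaired by the method you propose. You correctly reduce injectivity of $H^2(M,\RR)\to H^1(\widehat{A}^*)$ to the claim that $\iota_V(d^c\om)-2crF$ is exact for \emph{every} $(V,r)\in\Ker\mathbf{P}$, and you propose to manufacture a primitive by a Hodge-type argument. But this claim is false. The pair $(V,r)=(0,r_0)$ with $r_0\in i\RR$ a nonzero constant always lies in $\Ker\mathbf{P}$ (since $dr_0=0$ and $V=0$), and for it the form in question is $-2cr_0F$, whose de Rham class is a nonzero real multiple of $c_1(L)$ --- nonzero under the standing assumption of Section~\ref{sec:infmoduliabelian} that $c_1(L)$ is non-torsion. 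Concretely, $(0,0,0,-2cr_0F)=\mathring{\mathbf{P}}(0,r_0,0)$ is a boundary in $\widehat{A}^*$, so the nonzero class $[-2cr_0F]\in H^2(M,\RR)$ dies in $H^1(\widehat{A}^*)$; no choice of primitive exists, and what one actually obtains is $\Ker\Phi\cong H^2(M,\RR)/\Lambda$ with $\Lambda=\{[\iota_V(d^c\om)-2crF]\;:\;(V,r)\in\Ker\mathbf{P}\}\supseteq\RR\cdot c_1(L)$. You have in fact isolated the one point that the paper's own proof passes over in silence: the assertion that the displayed map on $\Ker\Phi$ is well defined (it depends a priori on the choice of $(V,r)$ with $(\dot\Om,\dot\theta,\dot\om)=\mathbf{P}(V,r)$) and an isomorphism requires exactly $\Lambda=0$. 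So the obstacle you flag is real, but your proposed resolution does not close it; a complete argument must either show $\Lambda=0$ under additional hypotheses on the stabilizer of the solution, or replace $H^2(M,\RR)$ by $H^2(M,\RR)/\Lambda$ in the statement of the exact sequence.
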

\begin{proof}
  By ellipticity of \eqref{eq:abcomplex},
  $\sigma_{\mathring{\mathbf{L}}}(\dot \Om,\dot \theta,\dot\om,b) = 0$
  implies $(\dot \Om,\dot \theta,\dot\om) =
  \sigma_{\mathring{\mathbf{P}}}(V,r)$. In addition,
$$
0 = \sigma_{\mathring{\mathbf{L}}_3}(v)(\dot \Om,\dot
\theta,\dot\om,b) = J (v \wedge v \wedge i_V\omega) - v \wedge b = -
v \wedge b
$$
and therefore $b = v \wedge v'$ by ellipticity of the De Rham complex,
proving ellipticity of \eqref{eq:circComplexinnerab}.

The linear map $\Phi$ which sends a class $[(\dot \Om,\dot
\theta,\dot\om,b)] \in H^1( \widehat A^*)$ to the class of $(\dot
\Om,\dot \theta,\dot\om,b)$ in $H^1(\mathring{A}^*)$ is trivially
well-defined and surjective. Assume now that the class of $(\dot
\Om,\dot \theta,\dot\om,b)$ is zero in $H^1(\mathring{A}^*)$. Then,
$(\dot \Om,\dot \theta,\dot\om) = \mathbf{P}(V,r)$ and from the condition
$\mathring{\mathbf{L}}_3(\mathbf{P}(V,r),b) = 0$ and
\eqref{eq:complexconditionsimplab} it follows that
$$
d(b - i_V(d^c\omega) + 2c r F) = 0.
$$
We can construct a map $\Ker \Phi \to H^2(M,\RR)$, defined by
$$
[(\mathbf{P}(V,r),b)] \mapsto [b - i_V(d^c\omega) + 2c r F],
$$
which is an isomorphism.
\end{proof}

\begin{remark}
  Note the strong analogy between $\Omega^0(E)$ and
  $\operatorname{Lie} \mathring{\operatorname{Aut} E}$ and,
  respectively, the Hamiltonian vector fields and the symplectic
  vector fields on a symplectic manifold. In fact, the natural map
  defined by \eqref{eq:innerouterab} defines an exact sequence
$$
0 \to \Omega^0(E) \to \operatorname{Lie} \mathring{\operatorname{Aut}
  E} \to H^2(M,\RR) \to 0,
$$
which provides an analogue for the (differential of the) flux map in
symplectic geometry. In the next section, we will see that
$\operatorname{Lie} \mathring{\operatorname{Aut} E}$ has the structure
of a Lie algebra while $\Omega^0(E)$ can be endowed with the structure
of a \emph{Courant algebra}.
\end{remark}

\subsection{Relation with generalized geometry of type
  $B_n$}\label{sec:relationGG}

The construction in Section \ref{sec:constraining} and Section
\ref{sec:hatAstar} may look rather strange from a classical
perspective, but turns out to be very natural in generalized
geometry. The precise theoretical framework that we need was
introduced by Baraglia \cite{Bar} and developed by the second author
\cite{Rubioth}, and goes under the name of \emph{generalized geometry
  of type $B_n$}.

To explain the most basic aspects of this relation, in this section we
consider the vector bundle
\begin{equation}\label{eq:Edefab}
  E = T \oplus i \RR \oplus T^*
\end{equation}
with an additional structure---the one of a smooth Courant
algebroid. This bundle is such that its space of global sections
$\Omega^0(E)$ provides the infinitesimal symmetries for the elliptic
complex \eqref{eq:circComplexinnerab} and the Lie algebra of
infinitesimal automorphisms of the Courant algebroid structure
contains $\operatorname{Lie} \mathring{\operatorname{Aut} E}$ as a Lie
subalgebra.  In addition, the space of parameters $A^1 \oplus
\Omega^2$---the middle step in the complexes \eqref{eq:abcircComplex}
and \eqref{eq:circComplexinnerab}---will be naturally interpreted as a
space of infinitesimal variations of a generalized metric on $E$. The
explanation of more intricate aspects of our construction in terms of
generalized geometry is postponed to Section \ref{sec:stgen}.

We fix our solution $(\Omega,\theta,\omega)$ of the abelian system
\eqref{eq:stromabelian}.  We shall emphasize that the rest of this
section relies solely on having a solution of the abelian Bianchi
identity
\begin{equation}\label{eq:bianchisimplab}
  d H = c(F \wedge F)
\end{equation}
with $H = d^c \omega$ and does not use the other structures provided
by the equations. Consider the smooth vector bundle \eqref{eq:Edefab}
endowed with the symmetric pairing
$$
\langle X + r + \xi,Y + t + \eta\rangle = \frac{1}{2}(\eta(X) +
\xi(Y)) + c(rt),
$$
and the canonical projection
$$
\pi \colon E \to T.
$$
Using the quantities $H$ and $F$, we can endowed $\Omega^0(E)$ with a
Dorfman bracket
\begin{equation}\label{eq:bracketab}
  \begin{split}
    [X+r+\xi,Y+t+\eta]  = {} &[X,Y] + L_{X}\eta - i_{Y}d\xi + i_{Y}i_{X}H\\
    & - F(X,Y) + i_Xdt - i_Y dr\\
    & + 2c(tdr) + 2c(i_XF t) - 2c(i_YF r).
  \end{split}
\end{equation} 
It can be checked from the Bianchi identity \eqref{eq:bianchisimplab}
that $(E,\la\cdot,\cdot\ra,[\cdot,\cdot],\pi)$ satisfies the axioms of
a Courant algebroid (see \cite[Sec. 2.3.3]{Rubioth}).

\begin{definition}\label{def:Courant}
  A Courant algebroid $(E,\la\cdot,\cdot\ra,[\cdot,\cdot],\pi)$ over a
  manifold $M$ consists of a vector bundle $E\to M$ together with a
  non-degenerate symmetric bilinear form $\la\cdot,\cdot\ra$ on $E$, a
  (Dorfman) bracket $[\cdot,\cdot]$ on the sections $\Omega^0(E)$, and
  a bundle map $\pi:E\to TM$ such that the following properties are
  satisfied, for $e,e',e''\in \Omega^0(E)$ and $\phi\in \cCi(M)$:
  \begin{itemize}
  \item[(D1):] $[e,[e',e'']] = [[e,e'],e''] + [e',[e,e'']]$,
  \item[(D2):] $\pi([e,e'])=[\pi(e),\pi(e')]$,
  \item[(D3):] $[e,\phi e'] = \pi(e)(\phi) e' + \phi[e,e']$,
  \item[(D4):] $\pi(e)\la e', e'' \ra = \la [e,e'], e'' \ra + \la e',
    [e,e''] \ra$,
  \item[(D5):] $[e,e']+[e',e]=2\pi^* d\la e,e'\ra$.
  \end{itemize}
\end{definition}
Automorphisms of this object were characterized in
\cite[Prop. 2.23]{Rubioth}. For our discussion, we just need the
infinitesimal automorphisms, given by a linear subspace
$$
\operatorname{Lie} \operatorname{Aut} E \subset \Omega^0(T) \oplus
\Omega^1(i\RR) \oplus \Omega^2.
$$

\begin{proposition}[\cite{Rubioth}]
  The Lie algebra of infinitesimal automorphisms of $E$ is given by
  \begin{equation}\label{eq:LieAutEabrubio}
    \operatorname{Lie} \operatorname{Aut} E = \{(V,a,B) \; : \; \cL_V F = da, \cL_V H = - dB + 2 c(a \wedge F)\}.
  \end{equation}
\end{proposition}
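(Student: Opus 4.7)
The plan is to linearize the characterization of $\operatorname{Aut} E$ in \cite[Prop.~2.23]{Rubioth}. Recall that, once the splitting $E = T \oplus i\RR \oplus T^*$ is fixed, a finite automorphism of $E$ covering a diffeomorphism $\phi$ factors as the composition of $\phi_\ast$ with an ``inner'' $(a,B)$-field transformation, for $a \in \Omega^1(i\RR)$ and $B \in \Omega^2$. These are orthogonal shifts of $E$ preserving the pairing $\la\cdot,\cdot\ra$. Differentiating at the identity, the underlying vector space of $\operatorname{Lie}\operatorname{Aut} E$ is identified with $\Omega^0(T) \oplus \Omega^1(i\RR) \oplus \Omega^2$, and each triple $(V, a, B)$ induces a first-order operator $D_{(V,a,B)}$ on $\Omega^0(E)$ which restricts to $\cL_V$ on the vector-field component and acts by the standard ``shift'' formula combining $\cL_V$ with contractions of $a$ and $B$, plus a $c$-contraction term in the $T^*$-component. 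By construction this $D_{(V,a,B)}$ covers $\cL_V$ through the anchor $\pi$ and preserves the symmetric pairing, so the only non-trivial constraint comes from requiring that it derive the Dorfman bracket.

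The core of the proof is then to impose the derivation identity
$$D_{(V,a,B)}[e,e'] = [D_{(V,a,B)} e, e'] + [e, D_{(V,a,B)} e']$$
for all $e,e' \in \Omega^0(E)$ and to collect the resulting terms by tensorial type. The untwisted part of \eqref{eq:bracketab} gives no obstruction, being preserved by $\cL_V$ and by any $(a,B)$-shift as in the exact Courant case. The remaining obstructions split cleanly into two channels. First, the $F$-dependent terms of \eqref{eq:bracketab}, namely $-F(X,Y)$, $2c(i_XF\, t)$ and $-2c(i_YF\, r)$, produce a single equation in $\Omega^2(i\RR)$ whose vanishing reads exactly $\cL_V F = da$: the Lie derivative of the two-form twist must be absorbed by the exterior derivative of the $a$-field shift. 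Second, the $H$-term $i_Yi_X H$, together with the cross-terms generated when the $(a,B)$-shift is propagated through the $F$-dependent pieces, produces an equation in $\Omega^3$ whose vanishing is $\cL_V H = -dB + 2c(a \wedge F)$. The anomalous summand $2c(a\wedge F)$ is forced by the abelian Bianchi identity $dH = c(F\wedge F)$ that made $E$ into a Courant algebroid in the first place. Conversely, a direct check shows that these two equations suffice for $D_{(V,a,B)}$ to be a bracket derivation, hence to define an element of $\operatorname{Lie}\operatorname{Aut} E$.

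The main obstacle is the sheer size of the bracket computation: \eqref{eq:bracketab} has nine distinct pieces, and the derivation identity produces roughly three times as many intermediate terms that must be reorganized according to whether they are $F$-linear or $H$-linear in the twisting data. The conceptual content is, however, the observation that the two displayed equations are precisely the integrability conditions for the $(a,B)$-shift to intertwine with $\cL_V$ on the twisting pair $(F,H)$. The cleanest execution is therefore to differentiate at $t=0$ the finite-automorphism formulas of \cite[Prop.~2.23]{Rubioth}, which package this bookkeeping in compact form and deliver the characterization \eqref{eq:LieAutEabrubio} directly.
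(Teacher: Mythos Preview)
The paper does not supply its own proof of this proposition: it is quoted directly from \cite{Rubioth} and no argument is given in the text. There is therefore nothing to compare your proposal against at the level of the paper itself.

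Your approach is correct and is essentially the standard one. A few remarks. First, the two routes you describe, namely direct verification of the derivation identity versus differentiation of the finite-automorphism characterization of \cite[Prop.~2.23]{Rubioth}, are equivalent and both work; the second is indeed tidier. Second, the paper does carry out the parallel computation in the non-abelian transitive case (Proposition~\ref{prop:Aut(E)} and Corollary~\ref{cor:ringAutE}), and the abelian $B_n$ case here is the specialization $\ad P = i\RR$, $[\cdot,\cdot]_{\ad P}=0$, $d^\theta=d$. If you want an argument internal to the paper rather than a citation, you can simply invoke that computation and set the bracket on the fibre to zero; the conditions $d^\theta = \nu\cdot d^{\theta+a}$ and $F = \nu\cdot F_{\theta+a}$ linearize to $\cL_V F = da$, and the $H$-condition linearizes exactly to $\cL_V H = -dB + 2c(a\wedge F)$, which is \eqref{eq:LieAutEabrubio}.
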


From \eqref{eq:LieAutEabrubio} and \eqref{eq:LieAutEab}, we obtain a
natural map
$$
\operatorname{Lie} \mathring{\operatorname{Aut} E} \to
\operatorname{Lie} \operatorname{Aut} E \colon (V,r,B) \mapsto (V,dr +
i_VF,B).
$$
In fact, the image defines a subspace closed under the Lie bracket
(see Corollary \ref{cor:ringAutE}), which justifies the notation and
gives the desired interpretation of $\operatorname{Lie}
\mathring{\operatorname{Aut} E}$. The space of sections $\Omega^0(E)$
corresponds to the \emph{inner symmetries} of $E$, via the natural
action induced by the Dorfman bracket.

To interpret geometrically the space of parameters $A^1 \oplus
\Omega^2$ in \eqref{eq:abcircComplex} and
\eqref{eq:circComplexinnerab}, we need generalized metrics. A
generalized metric on $E$ is given by a linear subspace
$$
V_+ \subset E
$$
such that the restriction of the pairing on $E$ is non-degenerate. For
simplicity, we assume $c < 0$ and the induced metric on $V_+$ to be
positive definite. Then, a generalized metric is equivalent to a
Riemannian metric $g$ on $M$ and an isotropic splitting of $E$ that,
using the canonical isotropic splitting of \eqref{eq:Edefab}, can be
regarded as an orthogonal transformation by $(b,a) \in \Omega^2 \oplus
\Omega^1(\ad P)$. Hence, the fixed solution of the abelian system
\eqref{eq:stromabelian} determines a generalized metric $V_+$, such
that the metric is determined by the $\SU(3)$-structure
$(\Omega,\omega)$, $b = 0$ and $a = 0$. Variations of $V_+$ are given
by variations of the metric, the imaginary $1$-form $a$ and the
$2$-form $b \in \Omega^2$, which are precisely the elements parameterized by $A^1 \oplus \Omega^2$.

The conclusion is that the finite-dimensional vector space
$H^1(\mathring{A}^*)$ corresponds to a space of infinitesimal
variations of $V_+$ as a generalized metric modulo the natural
symmetries of the smooth Courant algebroid $E$ \eqref{eq:Edefab},
while $H^1(\widehat{A}^*)$ is cut out by \emph{inner symmetries} of
$E$. Of course, $H^1(\mathring{A}^*)$ and $H^1(\widehat{A}^*)$ contain
more information related to variations of the $\SU(3)$-structure on
$M$ and the abelian system itself, that we ignore at this point of our
discussion.

\subsection{Relation with generalized geometry:
  symmetries}\label{sec:GGstrom}

We address now the relation between generalized geometry and the space
of infinitesimal variations of the Strominger system $H^1(S^*)$,
constructed in Section \ref{sec:infmoduligeneral}. We start our
discussion with the definition of a suitable transitive Courant
algebroid over $M$ and the characterization of its group of
automorphisms. We follow the notation introduced in Section
\ref{sec:parameters}.

We fix a solution $(\Omega,A,\nabla,\omega)$ of the Strominger system
\eqref{eq:stromintro}.  We define the quantities $H := d^c \omega$
and
$$
F = F_A + R_\nabla,
$$
the curvature of $\theta = \nabla \times A$, and consider the
covariant derivative $d^\theta$ induced by $\theta$ in $\ad P$. With
this notation, the \emph{Bianchi identity} (last equation in
\eqref{eq:stromintro}) can be written as
\begin{equation}\label{eq:bianchisimpl}
  d H = c (F \wedge F).
\end{equation}
Consider the vector bundle
\begin{equation}\label{eq:Edef}
  E = T \oplus \ad P \oplus T^*
\end{equation}
endowed with the symmetric pairing
$$
\langle X + r + \xi,Y + t + \eta\rangle = \frac{1}{2}(\eta(X) +
\xi(Y)) + c(r,t),
$$
and the canonical projection
$$
\pi \colon E \to T.
$$
Using the quantities $H$, $\theta$ and $F$, we can endow $\Omega^0(E)$
with a Dorfman bracket
\begin{equation}\label{eq:bracket}
  \begin{split}
    [X+r+\xi,Y+t+\eta]  = {} & [X,Y] + \cL_{X}\eta - i_{Y}d\xi + i_{Y}i_{X}H\\
    & - [r,t] - F(X,Y) + d^\theta_Xt - d^\theta_Y r\\
    & + 2c(d^\theta r,t) + 2c(F(X,\cdot),t) - 2c(F(Y,\cdot),r).
  \end{split}
\end{equation} 
Following \cite{ChStXu}, it can be checked from the Bianchi identity
\eqref{eq:bianchisimpl} that the tuple
$(E,\la\cdot,\cdot\ra,[\cdot,\cdot],\pi)$ satisfies the axioms of a
Courant algebroid (see Definition \ref{def:Courant}). An indirect way
of proving that the previous axioms are satisfied is to construct the
Courant algebroid from reduction \cite{BuCaGu} (see \cite[Section
2]{GF} and \cite{BarHek}). We will use the fact that $E$ is obtained
from reduction of an exact Courant algebroid over $P$ in Corollary
\ref{cor:ringAutE}.

Let $\Aut E$ denote the space of automorphisms of the vector bundle
$E$ that preserve the bracket and the pairing. Given $f \in \Aut E$ we
denote by $\check f \in \Diff$ the diffeomorphism on $M$ that it
covers. The property (D3) gives $\pi\circ f = d \chf \circ \pi$, so
$f$ is compatible with the anchor map.  Note that any $\nu\in
\mathrm{O}(\ad P)$ covering a diffeomorphism $\chf$ defines an
orthogonal transformation of $E\cong T+\ad P+T^*$ given by
$$f_\nu:=\left(
\begin{array}{ccc}
  d \chf &  &  \\
  & \nu &   \\
  & & (d\chf^*)^{-1} 
\end{array}
\right).$$ Orthogonal transformations of $E$ compatible with $\pi$ and
acting as the identity on $\ad P$, and hence covering the identity,
are of the form $$(B,a):=\left(
  \begin{array}{ccc}
    \Id & 0 & 0 \\
    a & \Id & 0 \\
    B-c(a,a)  & -2c(a,\cdot) & \Id 
  \end{array}
\right),$$ with $B\in \Omega^2$, $a\in \Omega^1(\ad P)$. To
characterize the group $\Aut(E)$, we note that an element $\nu \in
O(\ad P)$ covering $\chf \in \Diff$ acts on the covariant derivative
$d^\theta$ by
$$
i_V(\nu \cdot d^\theta)r = \nu \cdot (i_{\chf^*V}d^\theta (\nu^{-1}
\cdot r)),
$$
where we use the action of $\nu$ on a section $r \in \Omega^0(\ad P)$,
given by $\nu \cdot r (x) =\nu(r_{\chf^{-1}(x)})$, for $x \in
M$. Similarly, we have an action on $F$, given by
$$
\(i_W i_V(\nu \cdot F)\)(x) = \nu
(i_{\chf^*W}i_{\chf^*V}F(\chf^{-1}(x))).
$$
Furthermore, any $a \in \Omega^1(\ad P)$ can be used to deform the
connection $\theta$ to a new connection $\theta + a \colon TP \to VP$
on $P$ such that
$$
i_V d^{\theta + a}r = [\theta^\perp V - i_V a,r] = i_V(d^\theta r +
[a,r]),
$$
where the bracket in the middle expression denotes Lie bracket of
vector fields on $P$, while the bracket in the right expression
denotes the bracket on $\ad P$. The induced curvature is (note that
$d^\theta \circ d^\theta = [F, \cdot]$)
$$
F_{\theta + a} = F + d^\theta a + \frac{1}{2}[a,a],
$$
where $[a,a](V,W) = 2[a(V),a(W)]$.

\begin{proposition}\label{prop:Aut(E)}
  The group $\Aut(E)$ is the set of orthogonal transformations
  \begin{align*}
    \Big\{ f_\nu(B,a)  \st  & \nu \in \Aut(\ad P,  \la \cdot,\cdot \ra, [\cdot , \cdot]), B\in \Omega^2, a\in \Omega^1(\ad P),\\ & d^\theta = \nu \cdot d^{\theta+a}, \qquad F  = \nu \cdot F_{\theta + a},\\
    & \chf^*H = H -dB+2c(a,F) + c(a \wedge d^\theta a) +\frac{1}{3}
    c(a \wedge [a,a]) \Big\}
  \end{align*}
  together with the product given, for $f=f_\nu (B,a)$ and
  $f'=f_{\nu'}(B',a')$, by
$$ f \circ f' = f_{\nu \nu'}(\chf'^*B+B'+c((\nu'^{-1} \cdot a)\wedge a'), \nu'^{-1} \cdot a + a').$$
\end{proposition}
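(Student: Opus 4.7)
My plan is to decompose an arbitrary $f \in \Aut(E)$ into pieces of two prescribed types and then impose bracket preservation to recover the three stated constraints.

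First I would exploit the structural role of the anchor and the pairing. Any $f \in \Aut(E)$ covers a diffeomorphism $\chf \in \Diff$ by property (D2), and consequently preserves $\ker \pi = \ad P \oplus T^*$. Since the pairing restricted to $\ker \pi$ has radical exactly $T^*$ (as $c$ is non-degenerate on $\ad P$ and $T^*$ is isotropic with $\la T^*, \ad P \ra = 0$), $f$ preserves $T^*$. On $T^*$ the compatibility with $\pi$ forces $f|_{T^*} = (d\chf^*)^{-1}$. Passing to the quotient $\ker \pi / T^* \cong \ad P$, the map $f$ induces a bundle isomorphism $\nu$ of $\ad P$ covering $\chf$, which preserves $c$ because $f$ preserves $\la\cdot,\cdot\ra$; compatibility of $\nu$ with the Lie bracket on $\ad P$ will follow once we impose bracket preservation on sections of the form $r, t \in \Omega^0(\ad P)$, where the only $\ad P$-valued piece of $[r,t]$ in \eqref{eq:bracket} is $-[r,t]$.

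Next I would normalize by composing with $f_\nu^{-1}$, reducing to the case where $f$ covers $\Id_M$ and acts as the identity on $\ad P$. Any orthogonal transformation of $E$ with these two properties is of the form $(B,a)$ for unique $B \in \Omega^2$ and $a \in \Omega^1(\ad P)$: the $T \to \ad P$ component is $a$, the $T \to T^*$ component is forced by orthogonality to be $B - c(a,a)$, and the $\ad P \to T^*$ component is $-2c(a,\cdot)$. This is exactly the matrix given in the statement. Thus every $f$ factors as $f_\nu \circ (B,a)$, which is then rewritten as $f_\nu(B,a)$ by absorbing the pullback through $\chf$.

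The heart of the proof, and its main obstacle, is the bracket-preservation calculation. I would test $f_*[e,e'] = [f_*e, f_*e']$ on enough pairs of sections to isolate each identity. Bracketing $X \in \Omega^0(T)$ with $t \in \Omega^0(\ad P)$ in \eqref{eq:bracket} produces the $d^\theta$ term; tracking it through $f_\nu(B,a)$ forces $d^\theta = \nu \cdot d^{\theta+a}$, since the presence of $a$ twists the connection to $\theta + a$ while $\nu$ transports it back. Bracketing $X$ with $Y$ produces $i_Y i_X H$ in the $T^*$ slot and $-F(X,Y)$ in the $\ad P$ slot; the $\ad P$-equation gives $F = \nu \cdot F_{\theta + a}$, using the identity $F_{\theta + a} = F + d^\theta a + \tfrac{1}{2}[a,a]$, while the $T^*$-equation yields the $H$-transformation law, where the cubic term $\tfrac{1}{3} c(a\wedge [a,a])$ arises from expanding $c(a,F_{\theta+a})$ minus $c(a,F)$ and using invariance of $c$. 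The Jacobi-like identity (D1) is automatic once pairing and bracket are preserved on generators, so no further conditions appear.

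Finally, the composition law is a direct computation: applying $f_\nu(B,a) \circ f_{\nu'}(B',a')$ to $(X,r,\xi)$, the $\ad P$-part yields $\nu\nu'$, the total twist in the $T \to \ad P$ slot becomes $\nu'^{-1} \cdot a + a'$ after collecting terms through $\nu'$, and the $T \to T^*$ contribution picks up the pullback $\chf'^* B$, the term $B'$, and a cross-term $c((\nu'^{-1}\cdot a) \wedge a')$ from the off-diagonal orthogonality correction $-c(a,a)$. The main obstacle remains the bracket check: keeping track of signs, of the skew-symmetrizations implicit in $c(a,a)$ and $c(a \wedge d^\theta a)$, and of how $\nu$ intertwines $d^\theta$, is where the real work lies.
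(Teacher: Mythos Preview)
Your approach is essentially the paper's: factor $f$ as $f_\nu(B,a)$ using the anchor and the pairing, then impose bracket preservation. Two small points are worth flagging. First, ``compatibility with $\pi$'' cannot determine $f|_{T^*}$, since $\pi|_{T^*}=0$; what forces $f|_{T^*}=(d\chf^*)^{-1}$ is the pairing between $T$ and $T^*$ (equivalently, axiom (D5) applied to $\pi^* d\phi$). Second, the paper organizes the bracket check more efficiently than testing on pairs: it computes in one stroke how conjugation by $f_\nu$ and by $(B,a)$ transform the parameters $(d^\theta,F,H)$ of the bracket \eqref{eq:bracket}, obtaining $[\cdot,\cdot]_{\nu\cdot d^\theta,\,\nu\cdot F,\,\chf_*H}$ and $[\cdot,\cdot]_{d^{\theta+a},\,F_{\theta+a},\,H'}$ respectively, and then reads off the three constraints by comparison. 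Your heuristic that the cubic term comes from $c(a,F_{\theta+a})-c(a,F)$ would produce a coefficient $\tfrac{1}{2}$ rather than $\tfrac{1}{3}$; the correct $\tfrac{1}{3}$ only appears after collecting \emph{all} $T^*$-valued contributions in the conjugated bracket, so this is exactly the place where your acknowledged ``main obstacle'' bites.
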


\begin{proof}
  Given $f\in \Aut(E)$, the compatibility with the anchor map gives
  that the first row is $(d \chf,0,0)$. Then, as $f$ preserves the
  pairing, we have that the entry $(2,3)$ vanishes and $f$ is of the
  form
$$
f=\left(
  \begin{array}{ccc}
    d \chf & 0  & 0 \\
    * & \nu & 0 \\
    *  & *  & d\chf^{-1} 
  \end{array}
\right),
$$
for some $\nu \in \textrm{O}(\ad P)$. The transformation
$f_{\nu}^{-1}f$ preserves the pairing, is the identity on $\ad P$ and
is compatible with $\pi$, so it has to be $(B,a)$ for some $B\in
\Omega^2$ and $a\in \Omega^1(\ad P)$. Thus, $f=f_\nu(B,a)$. We next
check what are the constraints on $\nu,B,a$ for $f$ to preserve the
Courant bracket.
 
First, $\nu$ must preserve the bracket on $\ad P$. If that condition
is satisfied,
$$f_\nu [f_\nu^{-1}\cdot,f_\nu^{-1}\cdot]_{d^\theta,F,H} = [\cdot,\cdot]_{\nu \cdot d^\theta,\nu \cdot F,\chf_*H }. \label{eq:action-e}$$
On the other hand, after several elementary calculations, we have
$$(B,a)[(-B,-a)\cdot,(-B,-a)\cdot]_{d^\theta,F,H} =[\cdot,\cdot]_{d^{\theta + a},F_{\theta + a},H'}$$ with
\begin{equation}\label{eq:H'}
  H' = H -dB+2c(a,F) + c(a,d^\theta a) +\frac{1}{3} c(a,[a,a]),
\end{equation}
which give the conditions for $\Aut(P)$.

For the composition law, note that for $\nu'\in \OO(\ad P)$,
$$(B,a)\nu'_* = \nu'_* (\chg^*B,(\nu')^{-1}\chg^* a),$$
and that $(B,a)(B',a')=(B+B'+c(a\wedge a'),a+a')$.
\end{proof}

\begin{remark}
  The previous result applies in the more general setup studied in
  \cite{ChStXu}, for an arbitrary transitive Courant algebroid not
  necessarily obtained from reduction. Note that, for pure $a$-field
  transformations, the condition $d^\theta = d^{\theta + a}$ is in
  general very strong, as it forces $a$ to take values in the centre
  of the bundle of Lie algebras.
\end{remark}
For the next result, we use the fact that $E$ is obtained from
reduction of an exact Courant algebroid over $P$.

\begin{corollary}
  The group $\Aut E$ fits into an exact sequence of groups
  \begin{equation}
    \label{eq:AutW}
    \xymatrix{
      0 \ar[r] & \Omega^2_{cl} \ar[r] & \Aut E \ar[r]^q & \Aut TP/G\\
    }
  \end{equation}
  where $\Omega^2_{cl}$ denotes the space of closed $2$-forms on $M$
  and $\Aut TP/G$ is the group of automorphisms of the Lie algebroid
  $TP/G$.
\end{corollary}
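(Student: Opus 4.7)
The plan is to define $q$ directly from the explicit parametrization of $\Aut E$ in Proposition \ref{prop:Aut(E)}, verify that its image lies in $\Aut TP/G$, and identify its kernel. Using the splitting provided by $\theta$, we have an identification $TP/G \cong T \oplus \ad P$ of vector bundles, under which the Atiyah Lie algebroid bracket reads
\begin{equation*}
[(X,r),(Y,t)] = ([X,Y],\ d^\theta_X t - d^\theta_Y r + [r,t] - F(X,Y)),
\end{equation*}
with anchor the projection to $T$. For $f_\nu(B,a) \in \Aut E$ covering $\check f \in \Diff$, I would set
\begin{equation*}
q(f_\nu(B,a))(X,r) := (d\check f\, X,\ \nu r + \nu a(X)),
\end{equation*}
that is, the map obtained from $f_\nu(B,a)$ by dropping its $T^*$-component; compatibility with the anchor is immediate.

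The technical heart of the argument is showing that $q(f_\nu(B,a))$ preserves the Atiyah bracket. The key observation is that the first two constraints in Proposition \ref{prop:Aut(E)}---namely that $\nu$ preserve the bracket on $\ad P$, $d^\theta = \nu \cdot d^{\theta+a}$, and $F = \nu \cdot F_{\theta+a}$---are precisely what is needed for the formula above to define a Lie algebroid morphism; the verification is a routine but notationally lengthy term-by-term computation. A more conceptual alternative, which I would prefer to record given the remark preceding the statement, uses the fact that $E$ is obtained from the reduction of an exact Courant algebroid on $P$: every element of $\Aut E$ lifts to a $G$-equivariant automorphism of that exact Courant algebroid, whose ``$TP$-component'' descends to an automorphism of the Atiyah algebroid $TP/G$ that reproduces the formula above. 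In either approach, that $q$ is a group homomorphism follows from the composition law in Proposition \ref{prop:Aut(E)}, since its $(\nu,a)$-components are independent of $B$.

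It remains to compute $\Ker q$ and to construct the injection. If $q(f_\nu(B,a)) = \Id$, then $d\check f = \Id$ forces $\check f = \Id$; then $\nu r + \nu a(X) = r$ for all $X$ and $r$ forces $\nu = \Id$ and $a = 0$. The third condition of Proposition \ref{prop:Aut(E)} then collapses to $H = H - dB$, i.e.\ $dB = 0$. Conversely, for any closed $B \in \Omega^2$, the tuple $(B,0)$ satisfies all three conditions of the proposition trivially, defining an element of $\Aut E$; the composition law gives $(B,0)(B',0) = (B+B',0)$, so $B \mapsto (B,0)$ is an injective group homomorphism $\Omega^2_{cl} \hookrightarrow \Aut E$ whose image coincides with $\Ker q$. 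This establishes exactness at both $\Omega^2_{cl}$ and $\Aut E$. I expect the main obstacle to be the direct bracket-preservation check; the reduction viewpoint elegantly circumvents it at the cost of invoking the details of the reduction construction.
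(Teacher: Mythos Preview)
Your proposal is correct and follows essentially the same strategy as the paper: define $q$ via the explicit parametrization of Proposition~\ref{prop:Aut(E)} and read off the kernel from the defining conditions. The kernel computation is identical to the paper's.

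The one place where the paper is slicker is the verification that $q$ lands in $\Aut(TP/G)$. You propose either a direct term-by-term check or a lift to the exact Courant algebroid on $P$; the paper instead observes that the quotient map $E \to E/T^* \cong TP/G$ intertwines the Courant bracket with the Atiyah bracket (this is the content of ``the Courant bracket on sections of $E$ corresponds to the Lie bracket on sections of $TP/G$''). Since any $f \in \Aut E$ preserves $T^* = \ker\pi$ (by compatibility with the anchor) and the Courant bracket, it automatically descends to a Lie algebroid automorphism of the quotient---no computation needed. Your lifting argument is not wrong, but it presupposes that every element of $\Aut E$ lifts $G$-equivariantly to the exact algebroid upstairs, which is a stronger and less immediate claim than simply projecting down.
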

\begin{proof}
  When projecting to $TP/G\cong T\oplus\ad P$, the Courant bracket on
  sections of $E$ corresponds to the Lie bracket on sections of
  $TP/G$. From Proposition \ref{prop:Aut(E)}, we see that
  $q^{-1}(\Id)$ consists of the elements such that $\nu =\Id$,
  $\chf=\Id$, $a=0$, that is, of the exponentials of $2$-forms $B$,
  which must be closed by the last defining condition of $\Aut(E)$.
\end{proof}

The group $\Aut P$ can be regarded as a subgroup of $\Aut
(TP/G)$. Given $g\in \Aut P$ covering a diffeomorphism $\chg$, the
corresponding element $dg \in \Aut (TP/G)$ preserves the vertical
part, i.e. $\ad P$, and hence defines an element
$$
\nu^g : = dg_{|\ad P} \in \Aut(\ad P,\langle \cdot,\cdot\rangle,[\cdot
, \cdot]).
$$
The element $dg \in \Aut (TP/G)$ is described, in terms of the
splitting $T\oplus\ad P$, by
\begin{equation}\label{eq:Aut(TP/G)}
  dg =\left(
    \begin{array}{cc}
      d\chg & \\
      & \nu^g   \\
    \end{array}
  \right) \left(
    \begin{array}{cc}
      1 & \\
      g^{-1}\cdot \theta - \theta  & 1   \\
    \end{array}
  \right),
\end{equation}
where the isomorphism $TP/G\cong T\oplus \ad P$ is given by a
connection $\theta$. We denote by $f_g = f_{\nu^g}$ the induced
orthogonal transformation of $E$. The next result is a direct
consequence of Proposition \ref{prop:Aut(E)}.

\begin{corollary}\label{cor:ringAutE}
  Define $\mathring{\Aut E} = q^{-1}(\Aut P)$. Then, there is an exact
  sequence of groups
  \begin{equation}\label{eq:AutEmathring}
    0 \to \Omega^2_{cl} \to \mathring{\Aut E} \to \Aut P.
  \end{equation}
  Elements in $\mathring{\Aut E}$ correspond to transformations
  $f_g(B,a^g)$, with $g \in \Aut P$ covering $\chg$, $B \in \Omega^2$
  and $a^g:= g^{-1}\cdot \theta - \theta$, which satisfy
  \begin{equation}\label{eq:AutEmathring2}
    \chg^* H = H - dB +2c(a^g,F) + c(a^g,d^\theta a^g) +\frac{1}{3} c(a^g,[a^g,a^g]).
  \end{equation}
  Regarding $\mathring{\Aut E} \subset \Aut P \times \Omega^2$, the
  group structure is given by
$$ 
(g,B)(g',B') = (gg',\chg'^*B+B'+c((g'^{-1} \cdot a^g)\wedge a^{g'})),
$$
with Lie algebra $\Lie \; \mathring{\Aut E} \subset \Lie \Aut P \oplus
\Om^2$ given by
$$
\Lie \; \mathring{\Aut E} = \{\hat V + B : \cL_VH = - dB + 2c ((\hat V
\cdot \theta) \wedge F)\},
$$
where $\hat V \cdot \theta$ denotes the infinitesimal action of $\hat
V \in \Lie \Aut P$ on $\theta$ and $V \in \Omega^0(T)$ is the vector
on $M$ covered by $\hat V$.
\end{corollary}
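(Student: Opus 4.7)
The plan is to derive everything from Proposition \ref{prop:Aut(E)} by restricting the characterization of $\Aut E$ to those orthogonal transformations whose image under $q$ lies in $\Aut P \subset \Aut(TP/G)$. The exact sequence \eqref{eq:AutEmathring} will then follow from \eqref{eq:AutW} by restriction, since $\Omega^2_{cl}$ is exactly $\Ker q$ and the restriction of $q$ to $\mathring{\Aut E}$ lands in $\Aut P$ by definition.

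First I would identify the elements of $\mathring{\Aut E}$ in normal form. For $g \in \Aut P$ covering $\chg$, the decomposition \eqref{eq:Aut(TP/G)} presents $dg$, in the splitting of $TP/G$ induced by $\theta$, as the composition of the block-diagonal piece $(d\chg, \nu^g)$ with a unipotent factor determined by $a^g := g^{-1}\cdot \theta - \theta \in \Omega^1(\ad P)$. Comparing this with the general form $f_\nu(B,a)$ of Proposition \ref{prop:Aut(E)}, the only way for $q(f_\nu(B,a))$ to coincide with $dg$ is to take $\nu = \nu^g$ and $a = a^g$, while $B \in \Omega^2$ remains a free parameter. The conditions $d^\theta = \nu^g \cdot d^{\theta + a^g}$ and $F = \nu^g \cdot F_{\theta + a^g}$ demanded by Proposition \ref{prop:Aut(E)} are then automatic, since both sides express the pullback of $d^\theta$ and $F$ along the lifted action of $g^{-1}$ on $P$. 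Thus the only surviving constraint from Proposition \ref{prop:Aut(E)} is \eqref{eq:AutEmathring2}, which picks out $\mathring{\Aut E} \subset \Aut P \times \Omega^2$.

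For the group law I would substitute into the composition formula of Proposition \ref{prop:Aut(E)}. Since $g \mapsto \nu^g$ is a group homomorphism, $\nu^g \nu^{g'} = \nu^{gg'}$, and the key identity
$$ \nu^{g'-1} \cdot a^g + a^{g'} = a^{gg'} $$
follows from the one-line computation $(gg')^{-1}\cdot \theta = g'^{-1}\cdot(\theta + a^g) = \theta + a^{g'} + \nu^{g'-1}\cdot a^g$. Plugging this into the general composition formula reproduces the stated product $(g,B)(g',B') = (gg', \chg'^*B + B' + c((g'^{-1}\cdot a^g) \wedge a^{g'}))$.

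Finally, for the Lie algebra I would differentiate at the identity. A curve $g_t \in \Aut P$ with $g_0 = \Id$ and derivative $\hat V \in \Lie \Aut P$ covering $V \in \Omega^0(T)$ yields, to first order, an infinitesimal variation of $a^{g_t}$ equal (up to sign convention) to $\hat V \cdot \theta$. Linearising \eqref{eq:AutEmathring2} at $t = 0$, the quadratic term $c(a^g, d^\theta a^g)$ and the cubic term $c(a^g,[a^g,a^g])$ drop out, while $\chg_t^*H$ contributes $\cL_V H$; this produces precisely $\cL_V H = -dB + 2c((\hat V \cdot \theta) \wedge F)$, giving the claimed description of $\Lie \, \mathring{\Aut E}$. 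The main obstacle I anticipate is the careful verification that the two conditions on $d^\theta$ and $F$ in Proposition \ref{prop:Aut(E)} are automatic for $(\nu^g, a^g)$; once that is settled, everything else is a direct substitution into formulas already established.
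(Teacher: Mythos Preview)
Your proposal is correct and follows essentially the same approach as the paper, which gives only a two-line sketch: it notes that the result follows from Proposition~\ref{prop:Aut(E)} together with the observation $g \cdot (\theta + a^g) = \theta$ (which is exactly your claim that the conditions on $d^\theta$ and $F$ are automatic for $(\nu^g, a^g)$), and that the Lie algebra description comes from differentiating one-parameter subgroups. You have simply filled in the details the paper omits.
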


\begin{proof}
  The proof follows from Proposition \ref{prop:Aut(E)}, noticing that
  $g \cdot (\theta + a^g) = \theta$. The claim about the Lie algebra
  follow from taking one-parameter subgroups of generalized
  diffeomorphisms in $\mathring{\Aut E}$.
\end{proof}

\begin{remark}
  Condition \eqref{eq:AutEmathring2} is better understood in the total
  space of $P$, in terms of the transgression formula for the
  Chern-Simons $3$-form for the connection $\theta$ (see
  \cite{GF}). For this, we note that given $g \in \Aut P$, the
  elements in $q^{-1}(g)$ correspond to $B \in \Omega^2$ such that
  \begin{align*}
    d\(B + c(g^{-1}\theta \wedge
    \theta)\) 
    = H - CS(\theta) - g^*(H - CS(\theta)).
  \end{align*}
\end{remark}

\subsection{The complex $\mathring{S}^*$ and the elliptic complex
  $\widehat{S}^*$}\label{sec:ringS*}

We use next the symmetries of the Courant algebroid $E$ to construct
an exact diagram of linear maps
\begin{equation*}
  \xymatrix{
    & 0  \ar[d]   &  &\\
    & H^2(M,\RR)  \ar[d]  &  &\\
    & H^1(\widehat{S}^*)  \ar[d] & &\\
    0 \ar[r] & H^1(\mathring{S}^*) \ar[d] \ar[r] & H^1(S^*) \ar[r]^\delta & H^3(M,\RR)\\
    & 0 &  &
  }
\end{equation*}
such that $H^1(\mathring{S}^*)$ and $H^1(\widehat{S}^*)$ have a
natural interpretation in generalized
geometry. 
To make the link with the construction in Section
\ref{sec:infmoduligeneral}, we need to consider the reduction
$$
P_h \subset P
$$
of the bundle $P$ to $K \times \U(3)$ provided by the hermitian
structure $(\Omega,\omega)$ of the fixed solution. We define $\Lie \;
\mathring{\Aut E}_h \subset \Lie \; \mathring{\Aut E}$ by
$$
\Lie \; \mathring{\Aut E}_h = \{(\hat V,B) \in \Lie \; \mathring{\Aut
  E} : \theta \hat V - \nabla V \in \Omega^0(\ad P_h)\},
$$
where $\theta \hat V$ denotes the vertical part of $\hat V$ with
respect to the connection $\theta$ and $V \in \Omega^0(T)$ is the
vector field covered by $\hat V$.

Let $H^1(S^*)$ be the finite-dimensional vector space of infinitesimal
variations of the given solution (see Section
\ref{sec:infmoduligeneral}). Then, by \eqref{eq:diffnonabelian2} there
is a natural linear map
\begin{equation}\label{eq:fluxmap}
  \delta \colon  H^1(S^*) \to H^3(M,\RR),
\end{equation}
given by
$$
[(\dot \Om,\dot \theta,\dot\om)] \mapsto [J d \dot \om - J
(d\om)^{\dot J J} - 2c (\dot \theta \wedge F)].
$$
We define a sequence
\begin{equation}
  \label{eq:abcircComplex0}
  S^0 \oplus \Omega^2 \lra{\mathring{\mathbf{P}}} S^1 \oplus \Omega^2 \lra{\mathring{\mathbf{L}}} \mathring{\mathcal{R}},
\end{equation}
where we follow the notation of Section
\ref{sec:infmoduligeneral}. Here,
$$
\mathring{\mathcal{R}} := \Omega^{3,1} \oplus \Omega^{2,2} \oplus
\Omega^{0,2}(\ad P_h) \oplus \Omega^3 \oplus \Omega^5 \oplus
\Omega^6(\ad P_h)
$$
and the maps $\mathring{\mathbf{P}}$ and $\mathring{\mathbf{L}}$ are
defined by
\begin{align*}
  \mathring{\mathbf{P}}(V,r,B) = (\mathbf{P}(V,r),-B)
\end{align*}
and
$$
\mathring{\mathbf{L}} = \mathbf{L}_1 \oplus \mathbf{L}_2 \oplus
\mathring{\mathbf{L}}_3 \oplus \mathbf{L}_4 \oplus \mathbf{L}_5,
$$
where
$$
\mathring{\mathbf{L}}_3(\dot \Om,\dot \theta,\dot\om,b) = J d \dot
\om - J (d\om )^{\dot J J} - 2c (\dot \theta\wedge F) - db.
$$
For the next result, we note that we can regard $\Lie \;
\mathring{\Aut E}_h \subset S^0 \oplus \Omega^2$ by using the natural
map
$$
(\hat V,B) \to (V,\theta \hat V - \nabla V,B),
$$
which satisfies
$$
\mathbf{P}(V,\theta \hat V - \nabla V) = (\cL_V\Omega,\hat V \cdot
\theta,\cL_V\omega).
$$

\begin{lemma}
  The sequence \eqref{eq:abcircComplex0} induces a complex
  \begin{equation}
    \label{eq:circComplex}
    (\mathring{S}^*) \qquad \qquad \operatorname{Lie} \mathring{\operatorname{Aut} E}_h \lra{\mathring{\mathbf{P}}} S^1 \oplus \Omega^2 \lra{\mathring{\mathbf{L}}} \mathring{\mathcal{R}}
  \end{equation}
  by restriction of the first arrow, whose cohomology
  $H^1(\mathring{S}^*)$ is naturally isomorphic to the kernel of
  \eqref{eq:fluxmap}. Consequently, the cohomology of $\mathring{S}^*$
  is finite dimensional.
\end{lemma}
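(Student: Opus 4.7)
The plan is to follow the strategy used in the proof of Lemma \ref{lem:mathringA} for the abelian case, now leveraging the description of $\Lie \mathring{\Aut E}_h$ supplied by Corollary \ref{cor:ringAutE}. The complex property and the isomorphism with $\ker \delta$ will each rest on the same algebraic identity, which encodes how the equation $\mathring{\mathbf{L}}_3=0$ translates, at the level of $\mathbf{P}(V,r)$, into the defining condition of $\Lie \mathring{\Aut E}_h$.

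First I would check that \eqref{eq:circComplex} is a complex. Since $S^{\ast}$ is a complex by Theorem \ref{prop:ellipticnonabelian}, only the vanishing of $\mathring{\mathbf{L}}_3 \circ \mathring{\mathbf{P}}$ on $\Lie \mathring{\Aut E}_h$ needs verification. Given $(\hat V, B) \in \Lie \mathring{\Aut E}_h$, write $\hat V = \theta^{\perp} V + r + \nabla V$ so that $r = \theta \hat V - \nabla V \in \Omega^0(\ad P_h)$, and expand $\mathring{\mathbf{L}}_3(\mathbf{P}(V,r),-B)$. The identity $J d \cL_V \omega - J(d\omega)^{(\cL_V J)J} = \cL_V(d^c\omega) = \cL_V H$, which is the chain-rule computation performed in the proof of Lemma \ref{lem:diff}, reduces the required vanishing to the defining equation $\cL_V H = - dB + 2c((\hat V \cdot \theta) \wedge F)$ of $\Lie \mathring{\Aut E}$. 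The only subtlety is that $\dot \theta_h$ differs from $\hat V \cdot \theta$ by the correction $\nabla q$ appearing in \eqref{eq:isonabla}; but $\nabla q$ takes values in the non-skew-hermitian complement of $\ad P_h$ inside $\ad P_{\GL^+}$, while $F \in \Omega^2(\ad P_h)$ because $\nabla$ is $\U(3)$-compatible, so invariance of the pairing $c$ gives $c(\nabla q \wedge F) = 0$ pointwise and the two terms agree under $c$.

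Next I would construct the natural linear map $\Phi : H^1(\mathring{S}^{\ast}) \to H^1(S^{\ast})$ that forgets the $\Omega^2$-component of a representative; by the definition of $\mathring{\mathbf{L}}_3$ its image sits in $\ker \delta$. Surjectivity onto $\ker \delta$ is immediate: if $[\alpha] \in \ker \delta$, the closed $3$-form $Jd\dot\omega - J(d\omega)^{\dot J J} - 2c(\dot\theta_h \wedge F)$ is exact, equal to $db$ for some $b \in \Omega^2$, and $(\alpha, b)$ lifts the class to $\ker \mathring{\mathbf{L}}$. For injectivity, if $\Phi([\alpha, b])=0$ write $\alpha = \mathbf{P}(V,r)$ and set $\hat V = \theta^{\perp}V + r + \nabla V$, $B = -b$; the very same identity invoked above now shows that $\mathring{\mathbf{L}}_3(\mathbf{P}(V,r), b)=0$ is equivalent to $(\hat V, B) \in \Lie \mathring{\Aut E}_h$, so $\mathring{\mathbf{P}}(\hat V, B) = (\alpha, b)$ and the class vanishes. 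Finite-dimensionality of $H^1(\mathring{S}^{\ast})$ is then a direct consequence of the injection $H^1(\mathring{S}^{\ast}) \hookrightarrow H^1(S^{\ast})$ and the finite-dimensionality of $H^1(S^{\ast})$ provided by Theorem \ref{prop:ellipticnonabelian}.

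The main technical obstacle, absent in the abelian case, is the non-abelian bookkeeping surrounding the correction term $\nabla q$: one needs to pin down the correct lift $\hat V = \theta^{\perp}V + r + \nabla V$ consistent with both the infinitesimal action formula for $\dot\theta_h$ in Section \ref{sec:groupoid} and the $\Lie \mathring{\Aut E}_h$-condition in Corollary \ref{cor:ringAutE}, and to verify the pointwise identity $c(\nabla q \wedge F) = 0$ using the $\U(3)$-compatibility of $\nabla$ and invariance of $c$. Once these two points are in place, the rest of the argument parallels the proof of Lemma \ref{lem:mathringA} line by line.
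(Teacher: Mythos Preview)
Your proposal is correct and follows precisely the route the paper takes: the paper's own proof is simply ``follows as in Lemma \ref{lem:mathringA}'', together with the displayed identity $\mathbf{P}(V,\theta \hat V - \nabla V) = (\cL_V\Omega,\hat V \cdot \theta,\cL_V\omega)$ recorded just before the lemma, and you have correctly unpacked what that entails in the non-abelian setting, including the identification $\hat V=\theta^{\perp}V+r+\nabla V$ and the role of the correction $\nabla q$.

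One small sharpening: the vanishing $c(\nabla q\wedge F)=0$ is not a consequence of $\Ad$-invariance of $c$ per se. The point is rather that $R\in\Omega^2(\mathfrak u(3))\subset\Omega^2(\mathfrak{so}(6))$ (since $\nabla$ is $\U(3)$-compatible), while by construction $\nabla q$ takes values in the complement of $\mathfrak u(3)\subset\mathfrak{gl}(6,\RR)$; that complement is orthogonal to $\mathfrak u(3)$ under the trace form on $\mathfrak{gl}(6,\RR)$ (split into $J$-commuting/anticommuting and $g$-symmetric/skew pieces and check each), and $c_{\mathfrak{gl}}$ restricts to $-\tr$ on $\mathfrak{sl}(6,\RR)\supset\mathfrak u(3)$. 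With that clarification the argument goes through exactly as you outline.
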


The proof follows as in Lemma \ref{lem:mathringA}. Consider now the
vector bundle
$$
E_h = T \oplus \ad P_h \oplus T^*
$$
and note that $\Omega^0(E_h) = S^0 \oplus \Omega^1$. We define a
sequence of differential operators
\begin{equation}
  \label{eq:circComplexinner}
  (\widehat{S}^*) \qquad \qquad \Omega^0(E_h) \lra{\mathring{\mathbf{P}}} S^1 \oplus \Omega^2 \lra{\mathring{\mathbf{L}}} \mathring{\mathcal{R}},
\end{equation}
where
\begin{align*}
  \mathring{\mathbf{P}}(V,r,\xi) = (\mathbf{P}(V,r),d\xi + i_V(d^c\omega) -
  2c(r + \nabla V,F)).
\end{align*}
A straightforward calculation shows that
\begin{equation}\label{eq:innerouter}
  (V,r,-d\xi - i_V(d^c\omega) + 2c(r + \nabla V,F)) \in \operatorname{Lie} \mathring{\operatorname{Aut} E}_h
\end{equation}
and therefore \eqref{eq:circComplexinner} is a complex of differential
operators.

\begin{proposition}
  The sequence \eqref{eq:circComplexinner} is an elliptic complex of
  differential operators of degree $1$. There is an exact sequence
  \begin{equation}
    \label{eq:Scircexactseqtr}
    0 \to H^2(M,\RR) \to H^1( \widehat S^*) \to H^1(\mathring{S}^*) \to 0
  \end{equation}
  where $H^1( \widehat S^*)$ denotes the cohomology of
  \eqref{eq:circComplexinner}.
\end{proposition}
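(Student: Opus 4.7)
The plan is to mirror the proof of Proposition \ref{prop:Scircexactseqab}, replacing the abelian input with Theorem \ref{prop:ellipticnonabelian} (ellipticity of $S^*$) and the explicit description of $\Lie \mathring{\Aut E}_h$ provided by Corollary \ref{cor:ringAutE}.

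For ellipticity of $\widehat S^*$, I would first observe that $d \circ \mathring{\mathbf{L}}_3$ coincides with $\mathbf{L}_3$ on its first three entries, so vanishing of $\sigma_{\mathring{\mathbf{L}}}(v)(\dot\Omega, \dot\theta_h, \dot\omega^{1,1}, b)$ forces $\sigma_{\mathbf{L}}(v)(\dot\Omega, \dot\theta_h, \dot\omega^{1,1}) = 0$. Theorem \ref{prop:ellipticnonabelian} then provides $(V, r) \in S^0$ with $(\dot\Omega, \dot\theta_h, \dot\omega^{1,1}) = \sigma_{\mathbf{P}}(v)(V, r)$, and the remaining identity $\sigma_{\mathring{\mathbf{L}}_3}(v) = 0$ reduces, as in the abelian case, to $v \wedge b = 0$, yielding $b = v \wedge \xi$ by ellipticity of the de Rham complex. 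This matches $\sigma_{\mathring{\mathbf{P}}}(v)(V, r, \xi)$, establishing exactness of the symbol sequence.

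For the exact sequence, the natural map $\Phi \colon H^1(\widehat S^*) \to H^1(\mathring S^*)$ is defined by sending the class of a cocycle to the same representative viewed in the coarser quotient. The embedding \eqref{eq:innerouter} of $\Omega^0(E_h)$ into $\Lie \mathring{\Aut E}_h$ makes $\Phi$ well-defined, and surjectivity is immediate since $\mathring{\mathbf{L}}$ is the same operator in both complexes. For the kernel, suppose $[(\dot\Omega, \dot\theta, \dot\omega, b)] \in \ker \Phi$ and pick $(\hat V, B) \in \Lie \mathring{\Aut E}_h$ realizing the boundary, so that $(\dot\Omega, \dot\theta, \dot\omega) = \mathbf{P}(V, \theta\hat V - \nabla V)$ and $b = -B$, where $V$ is the vector field covered by $\hat V$. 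Setting $t := \theta\hat V \in \Omega^0(\ad P_h)$ I would introduce
$$
\beta := b - i_V H + 2c(t, F) \in \Omega^2
$$
and verify that $\beta$ is closed. Combining the Leibniz rule with $d^\theta F = 0$ gives $d(2c(t, F)) = 2c(d^\theta t \wedge F)$; Cartan's formula gives $d\, i_V H = \cL_V H - i_V dH$; and the Bianchi identity $dH = c(F \wedge F)$ gives $i_V dH = 2c(i_V F \wedge F)$. Substituting the Lie algebra constraint $\cL_V H = -dB + 2c((\hat V \cdot \theta) \wedge F)$ from Corollary \ref{cor:ringAutE}, together with $\hat V \cdot \theta = d^\theta t + i_V F$, the contributions cancel pairwise, giving $d\beta = 0$. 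The assignment $[(\dot\Omega, \dot\theta, \dot\omega, b)] \mapsto [\beta]$ is independent of the choice of $(\hat V, B)$ and of representative (changing the latter by a boundary from $\Omega^0(E_h)$ modifies $\beta$ by an exact form), injective by construction, and surjective since any closed $\beta$ is hit by the class of $(0,0,0,\beta) \in \widehat S^1$, which maps to zero in $H^1(\mathring S^*)$ via $(0, 0, -\beta) \in \Lie \mathring{\Aut E}_h$.

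The main obstacle is the closedness calculation $d\beta = 0$. In the abelian setting this reduces to a brief computation, but here one must track precisely how the Chern--Simons type condition built into $\Lie \mathring{\Aut E}_h$ interacts with the nonabelian Bianchi identity $dH = c(F \wedge F)$, in particular how the two factors $F = F_A + R_\nabla$ contribute through the invariant pairing $c$. Once this identity is established, the rest of the argument is a formal consequence of elliptic operator theory and the definition of the complexes.
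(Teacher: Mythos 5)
Your argument is correct and takes essentially the same route as the paper, whose own proof consists of the single line that it is analogous to Proposition \ref{prop:Scircexactseqab}; you have in fact supplied more detail than the paper does, notably the closedness computation for $\beta = b - i_V H + 2c(\theta\hat V,F)$, which goes through exactly as you describe by combining $dH = c(F\wedge F)$, $d^\theta F = 0$ and the constraint $\cL_V H = -dB + 2c((\hat V\cdot\theta)\wedge F)$ defining $\Lie\;\mathring{\Aut E}_h$. The only cosmetic slip is that $\theta\hat V$ lies in $\Omega^0(\ad P)$ rather than $\Omega^0(\ad P_h)$ (it is $\theta\hat V - \nabla V$ that lies in $\Omega^0(\ad P_h)$), which does not affect any step of the computation.
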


The proof is analogue to the proof of Proposition
\ref{prop:Scircexactseqab}.

\begin{remark}
  The ellipticity of the complex \eqref{eq:circComplexinner} gives an
  alternative proof of the finite-dimensionality of $H^1(\mathring{S}^*)$. 
\end{remark}

\subsection{Relation with generalized geometry:
  metrics}\label{sec:genmetrics}

We finish this section by providing an interpretation of $S^1 \oplus
\Omega^2$---the middle step in the complexes \eqref{eq:circComplex}
and \eqref{eq:circComplexinner}---as a space of infinitesimal
variations of a generalized metric on $E$. For this, we recall a few
basic facts about generalized metrics on the Courant algebroid $E$
following \cite{GF}. This will provide the necessary background for
Section \ref{sec:stgen}.

Let $(t,s)$ be the signature of the pairing on the Courant algebroid
$E$. A generalized metric of signature $(p,q)$, or simply a metric on
$E$, is a reduction of the $O(t,s)$-bundle of frames of $E$ to
$$
O(p,q)\times O(t-p,s-q) \subset O(t,s).
$$
Alternatively, it is given by a subbundle
$$
V_+ \subset E
$$
such that the restriction of the metric on $E$ to $V_+$ is a
non-degenerate metric of signature $(p,q)$. We denote by $V_-$ the
orthogonal complement of $V_+$ on $E$. A generalized metric determines
a vector bundle isomorphism
$$
G\colon E \to E,
$$
with $\pm 1$-eigenspace $V_\pm$, which is symmetric, $G^* = G$, and
squares to the identity, $G^2 = \Id$. The endomorphism $G$ determines
completely the metric, as $V_+$ is recovered by
$$
V_+ = \Ker(G - \Id).
$$

\begin{definition}[\cite{GF}]\label{def:admet}
  A metric $V_+$ of arbitrary signature is admissible if
$$
V_+ \cap T^* = \{0\} \qquad \textrm{and} \qquad \rk V_+ = \rk E - \dim
M.
$$
\end{definition}

We recall the basic result that we need in order to understand the
parameters encoded by an admissible generalized metric.

\begin{proposition}[\cite{GF}]\label{prop:admissmetric2}
  An admissible metric $V_+$ on $E$ is equivalent to a pair given by a
  metric $g$ on $M$ and an isotropic splitting. Using the canonical
  isotropic splitting of \eqref{eq:Edef}, $V_+$ is given by a metric
  $g$ on $M$ and an orthogonal transformation by $(b,a) \in \Omega^2
  \oplus \Omega^1(\ad P)$
  \begin{equation}
    V_+ = (-b,-a)\{X + g(X) + r \colon X \in T, r \in \ad P\}.
  \end{equation}
  An admissible metric determines a connection $\theta' = \theta + a$
  on $P$ and a $3$-form $H'$ on $M$ given by \eqref{eq:H'}, such that
  the bracket in the splitting provided by $V_+$ is given by
  \eqref{eq:bracket}.
\end{proposition}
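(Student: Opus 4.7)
The plan is to reduce the characterization to linear algebra on the fibers of $E$, exploiting the observation that admissibility is precisely the condition that $V_-$ be the graph of a bundle map from $T$. First I would show that, for an admissible metric $V_+$, the anchor restricts to a bundle isomorphism $\pi|_{V_-}\colon V_- \to T$. A dimension count together with $V_+ \cap T^* = 0$ gives $V_+ \oplus T^* = E$. Passing to orthogonal complements and using the identity $(T^*)^\perp = \ad P \oplus T^* = \ker \pi$ (from the fact that $T^*$ is Lagrangian against $T$ and orthogonal to $\ad P$) yields $V_- \cap \ker \pi = 0$. Since $\rk V_- = \dim M$, this forces $\pi|_{V_-}$ to be an isomorphism, and its inverse $\sigma\colon T \to E$ is a splitting of the anchor with image $V_-$.

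Next I would extract the triple $(g, b, a)$ from $\sigma$. The section $\sigma$ decomposes as $\sigma(X) = X + \alpha(X) + \beta(X)$ with $\alpha \in \Omega^1(\ad P)$ and $\beta$ a section of $T^* \otimes T^*$. Setting $a = -\alpha$ and splitting the symmetric/antisymmetric parts of $-\beta$, after the correction by $c(a,a)$ dictated by the formula in the proposition, yields $g \in \Gamma(S^2T^*)$ and $b \in \Omega^2$. The non-degeneracy of the pairing restricted to $V_-$ translates to $g$ being non-degenerate, so $g$ is a pseudo-Riemannian metric on $M$ of the signature determined by $V_-$. To verify the explicit formula $V_+ = (-b, -a)\{X + g(X) + r\}$, I would first observe that for $(b, a) = (0, 0)$ a direct pairing calculation, using non-degeneracy of $c$ on $\ad P$, shows that the orthogonal complement of $\{X + g(X) + r\}$ is $\{Y - g(Y)\colon Y \in T\}$. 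Since the orthogonal automorphism $(-b, -a)$ commutes with taking orthogonal complements, applying it to this description and matching with $V_-$ as constructed from $\sigma$ identifies $V_+$ with the stated subspace.

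The converse direction is straightforward: given $(g, b, a)$, the subspace $V_+ = (-b, -a)\{X + g(X) + r\}$ has the correct rank, and $V_+ \cap T^* = 0$ follows because $(-b, -a)$ preserves $T^*$ setwise while $\{X + g(X) + r\}$ is transverse to $T^*$ whenever $g$ is non-degenerate. The claim about the Dorfman bracket on $V_+$ taking the form \eqref{eq:bracket} with $(\theta + a, F_{\theta + a}, H')$ as structural data is then immediate from the transformation calculus already established in the proof of Proposition~\ref{prop:Aut(E)}: the orthogonal automorphism $(B, a)$ conjugates the Dorfman bracket with data $(d^\theta, F, H)$ into the Dorfman bracket with data $(d^{\theta + a}, F_{\theta + a}, H')$, where $H'$ is as in \eqref{eq:H'}.

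The main obstacle is conceptual rather than technical: recognizing that the admissibility condition $V_+ \cap T^* = 0$ is equivalent to the transversality $V_- \cap \ker \pi = 0$ which is what produces the splitting. Once this is in place, the remaining work amounts to sign bookkeeping in the decomposition of $\sigma$ and a direct appeal to the transformation calculus for orthogonal automorphisms of $E$ proved in the previous subsection.
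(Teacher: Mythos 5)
Your argument is correct. Note, however, that the paper offers no proof of Proposition \ref{prop:admissmetric2} to compare against: it is quoted from \cite{GF}, so your write-up is effectively supplying the omitted argument. The route you take is the natural one and matches the standard proof: $V_+\cap T^*=0$ plus the rank condition give $V_+\oplus T^*=E$, hence (taking perps, with $(T^*)^\perp=\ker\pi$) $V_-\cap\ker\pi=0$ and $\pi|_{V_-}$ is an isomorphism; the inverse splitting $\sigma$ packages into $(g,b,a)$; and the bracket statement is exactly the conjugation identity $(B,a)[(-B,-a)\cdot,(-B,-a)\cdot]_{d^\theta,F,H}=[\cdot,\cdot]_{d^{\theta+a},F_{\theta+a},H'}$ already established in the proof of Proposition \ref{prop:Aut(E)}, applied with $B=b$ (using $(-b,-a)^{-1}=(b,a)$, since $c(a\wedge a)=0$). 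The one place where the "sign bookkeeping" you defer actually matters: with the matrix convention for $(B,a)$ in the paper, one finds $\beta(Y)=-i_Yb-c(a(Y),a(\cdot))-g(Y)$, so $g=\mathrm{sym}(-\beta)-c(a,a)$ (not $+$), and the induced pairing on $V_-$ is $\langle\sigma(X),\sigma(Y)\rangle=-g(X,Y)$; this confirms your claim that non-degeneracy of the restricted pairing is equivalent to non-degeneracy of $g$, and fixes the signature convention for $V_-$.
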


From the previous result, we note that the fixed solution of the
Strominger system determines an admissible generalized metric $V_+$,
given by the metric determined by the $\SU(3)$-structure
$(\Omega,\omega)$ and the canonical isotropic splitting, with
connection $\theta$ and $3$-form $H = d^c\omega$. Furthermore,
cocycles for $\mathring{S}^*$ in $S^1 \oplus \Omega^2$ correspond to
infinitesimal variations of the admissible metric $V_+$.

Therefore, the finite-dimensional vector space $H^1(\mathring{S}^*)$
corresponds to a space of infinitesimal variations of $V_+$ as a
generalized metric modulo natural symmetries of the smooth Courant
algebroid $E$ \eqref{eq:Edef}, while $H^1(\widehat{S}^*)$ is cut out
by \emph{inner symmetries} of $E$.

\section{Killing spinors in generalized geometry}\label{sec:stgen}

In this section we introduce a natural notion of Killing spinor in
generalized geometry. The \emph{Killing spinor equations}
\begin{equation}\label{eq:Killing}
  D^\phi_+ \eta = 0, \qquad \slashed D_-^\phi \eta = 0,
\end{equation}
depend on a generalized metric $V_+$ on a smooth Courant algebroid $E$
over $M$ and a smooth function $\phi \in C^{\infty}(M)$. We specify to
the case of a spin manifold of dimension six and study the equations
\eqref{eq:Killing} in two different cases. Firstly, when $E$ is exact,
we show that a solution of \eqref{eq:Killing} is equivalent to a
metric on $M$ with holonomy $\SU(3)$. Secondly, for a suitable choice
of transitive Courant algebroid $E$, by applying the original result
of Strominger and Hull \cite{HullTurin,Strom}, we prove that the Killing spinor equations
\eqref{eq:Killing} correspond to the Strominger system.

The present section gives a precise interpretation of the vector
spaces $H^1(\mathring{S}^*)$ and $H^1(\widehat{S}^*)$ constructed in
Section \ref{sec:anomalyflux}, as spaces of infinitesimal deformations
of solutions of the Killing spinor equations \eqref{eq:Killing} modulo
infinitesimal symmetries of a Courant algebroid, offering a conceptual
explanation of the appearance of generalized geometry in the study of
the Strominger system. In addition, it provides a unifying framework
for the theory of the Strominger system and the well-established
theory of metrics with holonomy $\SU(3)$, which we expect will have
future applications in the former.

\subsection{The canonical Levi-Civita connection}

Let $E$ be a smooth, transitive, Courant algebroid over a smooth
manifold $M$ of arbitrary dimension, that is, a vector bundle
satisfying the axioms of Definition \ref{def:Courant} and such that
the anchor map
$$
\pi \colon E \to T
$$
is surjective.  In this section we introduce a canonical notion of
\emph{Levi-Civita connection} on $E$---a natural torsion-free
connection associated to an admissible generalized metric. For
simplicity, we will assume that $E$ is obtained from reduction of an
exact Courant algebroid on a principal $G$-bundle $P$, as in
\cite[Sec. 2]{GF}. The general case follows easily from \cite{ChStXu}.

Recall that a generalized connection $D$ (or simply, a connection) on
$E$ is a first order differential operator
$$
D \colon \Omega^0(E) \to \Omega^0(E^* \otimes E)
$$
satisfying the Leibniz rule $D_e(\phi e') = \phi D_ee' +
\pi(e)(\phi)e'$, for $e,e' \in \Omega^0(E)$ and $\phi\in
C^\infty(M)$. We will only consider connections compatible with the
inner product on $E$, that is, satisfying
$$
\pi(e)(\langle e',e'' \rangle) = \langle D_e e',e'' \rangle + \langle
e',D_e e'' \rangle.
$$
The space of connections on $E$ is an affine space modelled on
$\Omega^0(E^*\otimes \fo(E))$.

Following \cite{G3}, we first introduce the notion of
\emph{Gualtieri--Bismut connection}. Given an admissible metric $V_+$
(see Definition \ref{def:admet}), we define
$$C^+=(\ad P)^\perp \subset V_+.$$
We can associate an endomorphism of the vector bundle $E$ such that
$C(V_+) = V_-$ and $C(V_-) = C_+$, defined by
$$
C = \pi_{|V_-}^{-1} \circ \pi \circ \Pi_+ + \pi_{|C_+}^{-1} \circ \pi
\circ \Pi_-,
$$
where
$$
\Pi_{\pm} = \frac{1}{2}(\Id \pm G) \colon E \to V_{\pm}
$$
denote the orthogonal projections. In the canonical splitting provided
by $V_+$
\begin{equation}\label{eq:Esplittr}
  E \cong T \oplus \ad P \oplus T^*
\end{equation}
we have (see Proposition \ref{prop:admissmetric2})
\begin{equation}\label{eq:V+b0}
  V_+ = \{X + g(X) + r \colon X \in T, r \in \ad P\},
\end{equation}
and then we can write explicitly
$$
C(X + r + gX) = X- gX \quad \textrm{and} \quad C(Y - gY) = Y + gY.
$$

\begin{definition}
  The \emph{Gualtieri--Bismut connection} $D^B$ of $V_+$ on $E$ is
  defined by
  \begin{equation}\label{eq:bismut}
    D^B_{e}e' = [e_-,e'_+]_+ + [e_+,e'_-]_- + [C e_-,e'_-]_- + [Ce_+,e'_+]_+ ,
  \end{equation}
  where $e_\pm = \Pi_\pm e$.
\end{definition}
We check now that $D^B$ is well-defined.
\begin{proposition}
  The expression \eqref{eq:bismut} defines a well-defined connection
  on $E$ compatible with $V_+$, that is, $D^B$ sends $V_\pm$ to
  $V_\pm$.
\end{proposition}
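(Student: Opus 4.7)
The plan is to check four properties of the expression \eqref{eq:bismut}: preservation of the splitting $E = V_+ \oplus V_-$, $C^\infty(M)$-linearity in the subscript $e$, the Leibniz rule in the second argument $e'$, and compatibility with the pairing (which is implicit in the paper's notion of generalized connection).

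Preservation of $V_\pm$ is immediate from the very form of the definition. If $e' \in \Om^0(V_+)$ then $e'_- = 0$ and $e'_+ = e'$, so only the first and fourth terms of $D^B_e e'$ survive, and both are $V_+$-valued thanks to the projection $[\cdot,\cdot]_+$; the case $e' \in \Om^0(V_-)$ is symmetric.

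The key step is $C^\infty$-linearity in $e$, where one must overcome the well-known failure of the Dorfman bracket to be $C^\infty$-linear in its first argument. Combining axioms (D3) and (D5) gives the anomaly formula
\begin{equation*}
[\phi e, e'] = \phi [e, e'] - \pi(e')(\phi)\, e + 2\la e, e'\ra \pi^* d\phi.
\end{equation*}
In each of the four brackets appearing in $D^B_e e'$, the two sections lie in mutually orthogonal subspaces---using that $Ce_+ \in V_-$ and $Ce_- \in C_+ \subset V_+$, together with $\la V_+, V_-\ra = 0$---so the $\pi^* d\phi$ anomaly vanishes identically. The residual anomaly $-\pi(e')(\phi)\, e$ then lies in the $V_\pm$-eigenspace opposite to the projection $[\cdot,\cdot]_\pm$ being applied: for instance, in $[\phi e_-, e'_+]_+$ the anomaly $-\pi(e'_+)(\phi) e_-$ has $(e_-)_+ = 0$, and likewise for the other three terms (with $Ce_-$, resp.\ $Ce_+$, playing the role of the first argument in the last two terms). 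All anomalies die, yielding $D^B_{\phi e} e' = \phi\, D^B_e e'$.

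The Leibniz rule in $e'$ follows from (D3) term by term: each bracket produces an extra contribution $\pi(\cdot)(\phi)\, e'_\pm$, and the key observation $\pi(Ce_\pm) = \pi(e_\pm)$---built into the definition of $C$ as a right inverse of $\pi$ on $V_\pm$---makes the four pieces collapse into $\pi(e)(\phi)\,e'$. Compatibility with the pairing reduces similarly to axiom (D4): decomposing $\la e',e''\ra = \la e'_+,e''_+\ra + \la e'_-,e''_-\ra$ and grouping the terms of $\la D^B_e e', e''\ra + \la e', D^B_e e''\ra$ by eigenspace, one applies (D4) to each bracketed pair and invokes $\pi(Ce_\pm) = \pi(e_\pm)$ once again to recover $\pi(e)\la e',e''\ra$. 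The principal obstacle is the $C^\infty$-linearity in the subscript; the whole point is that the pairing of $\Pi_\pm$-projections with $C$ in \eqref{eq:bismut} is calibrated precisely so that both types of anomaly in the Dorfman bracket---the one proportional to $\pi^* d\phi$ and the one proportional to $e$---are simultaneously killed.
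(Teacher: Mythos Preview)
Your proof is correct and follows essentially the same strategy as the paper, though more self-contained: the paper defers the Leibniz rule, preservation of $V_\pm$, and metric compatibility to \cite[Th.~3.1]{G3}, and only writes out the $C^\infty$-linearity in the first argument explicitly. For that step the paper swaps arguments via (D5), applies (D3), and swaps back---which is precisely your anomaly formula unpacked---and the cancellations are exactly the two you identified (orthogonality of $V_+$ and $V_-$ kills the $\pi^*d\phi$ term, and the projection kills the residual $-\pi(e')(\phi)e$).
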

\begin{proof}
  The Leibniz rule and the facts that $D^B$ preserves $V_\pm$ and the
  inner product on $E$ follow exactly as in \cite[Th. 3.1]{G3}. To
  prove that $D^B_{\phi e}e' = \phi D_e^Be'$ for any $\phi \in
  C^\infty(M)$, using the properties of the Dorfman bracket we
  calculate
  \begin{align*}
    D^B_{fe_-}e' & {} =  [f e_-,e'_+]_+ + [C(f e_-),e'_-]_-\\
    & {} = - f [e'_+,e_-]_+ - \pi(e'_+)(f)(e_-)_+\\
    & \phantom{ {} =}- f [e'_-,Ce_-]_- - \pi(e'_-)(f)(C e_-)_-\\
    & {} = f([e_-,e'_+]_+ + [Ce_-,e'_-]_-)
  \end{align*}
  and similarly for $D^B_{fe_+}e'$.
\end{proof}

The generalized torsion \cite{G3} of a connection $D$ on $E$ is a
totally skew tensor $T_D \in \Lambda^3 E^*$ defined by
$$
T_D(a,b,c) = \langle D_{a}b - D_{b}a - [[a,b]],c \rangle +
\frac{1}{2}\(\langle D_{c} a,b \rangle - \langle D_{c} b,a \rangle\),
$$
where
$$
[[a,b]] = \frac{1}{2}([a,b] - [b,a])
$$
is the skew-symmetrization of the Dorfman bracket on sections of
$E$. A connection with vanishing torsion will be referred as a
\emph{torsion-free connection}. By analogy with hermitian geometry, we
introduce the following notion of Levi-Civita connection associated to
a generalized metric.

\begin{definition}
  The \emph{canonical Levi-Civita connection} of $V_+$ is defined by
  \begin{equation}\label{eq:levi-civita}
    D^{LC} = D^B - \frac{1}{3} T_{D^B},
  \end{equation}
  where we identify the torsion $T_{D^B}$ with the element $\langle
  \cdot, \cdot \rangle^{-1} T_{D^B} \in E \otimes \Lambda^2 E^*$.
\end{definition}
By construction, $D^{LC}$ is a natural object on $E$, that is, given
an automorphism $f \in \Aut E$ and a generalized metric $V_+$, we have
$$
f_*(D^{LC}(V_+)) = D^{LC}(f(V_+)).
$$
As in \cite{GF}, we can modify $D^{LC}$ by elements in $E^*$, while
preserving the torsion-free property---hence, torsion-free, metric
connections are not unique (see also \cite{CSW}).  Note that any other
compatible generalized connection differs from $D^{LC}$ by
$$
\chi \in E^* \otimes \(\mathfrak{o}(V_+)\oplus \mathfrak{o}(V_-)\).
$$
Given $\varphi \in E^*$, consider the \emph{Weyl term}
$$
\chi^\varphi \in E^* \otimes\mathfrak{o}(E)
$$
defined by
\begin{equation}\label{eq:Weylterm}
  \chi^\varphi_e e' = \varphi(e') e - \langle e,e' \rangle \langle\cdot,\cdot\rangle^{-1}\varphi.
\end{equation}
We introduce the notation
$$
\chi^{\pm\pm\pm}_{e}e' = \Pi_{\pm} \chi_{e_{\pm}}(e'_{\pm}).
$$
for $\chi \in E^* \otimes (\mathfrak{o}(E))$ and $e, e' \in E$. Then,
we obtain a new torsion-free, compatible, generalized connection by
the formula
\begin{equation}\label{eq:connectionvarphi}
  D^\varphi = D^{LC} + \frac{1}{3(\rk V_+ -1)}(\chi^\varphi)^{+++} + \frac{1}{3(\rk V_- -1)}(\chi^\varphi)^{---}.
\end{equation}
Once more, we have that the connection $D^\varphi$ is natural, in the following sense
$$
f_*(D^{f^* \varphi}(V_+)) = D^{\varphi}(f(V_+)),
$$
for any $f \in \Aut E$.

\begin{remark}
  A more natural approach to the previous construction would be to
  derive $D^\varphi$ as the canonical Levi-Civita connection on a
  modified version of a Courant algebroid, which keeps track of
  conformal changes in the pairing $\langle \cdot,\cdot \rangle$.
\end{remark}

\subsection{Some explicit formulae}

We fix a generalized metric $V_+$ on $E$ and consider the associated
splitting \eqref{eq:Esplittr} and triple $(g,H,\theta)$ (see
Proposition \ref{prop:admissmetric2}). We denote by $c$ the symmetric
pairing induced by $\langle \cdot,\cdot \rangle$ in $\ad P$. We define
connections on $T$ with skew torsion, compatible with the metric $g$,
given by
\begin{equation}\label{eq:nabla+3}
  \begin{split}
    \nabla^\pm &= \nabla^g \pm \frac{1}{2}g^{-1}H,\\
    \nabla^{\pm1/3} &= \nabla^g \pm \frac{1}{6}g^{-1}H,
  \end{split}
\end{equation}
where $\nabla^g$ denotes the Levi-Civita connection of the metric $g$
on $M$. Consider the covariant derivative $d^\theta$ on $\ad P$
determined by $\theta$. Considering
\begin{equation}\label{eq:abcd}
  \begin{split}
    a_+ &= X + r + gX,\\
    b_- &= Y - gY,\\
    c_+ &= Z + t + gZ,\\
    d_- &= W - gW,
  \end{split}
\end{equation}
we have
\begin{equation}\label{eq:bismutexp}
  \begin{split}
    D^B_{a_+} c_+ &= 2\Pi_+\(\nabla^+_XZ + g^{-1}c(i_XF,t)\) + d^\theta_X t - F(X,Z),\\
    D^B_{b_-} c_+ &= 2\Pi_+\(\nabla^+_YZ + g^{-1}c(i_YF,t)\) + d^\theta_Y t - F(Y,Z),\\
    D^B_{a_+} b_- &= 2\Pi_-\(\nabla^-_XY + g^{-1}c(i_YF,r)\),\\
    D^B_{b_-} d_- &= 2\Pi_-\(\nabla^-_YW\),
  \end{split}
\end{equation}
where $F$ is the curvature of $\theta$. To calculate the torsion of
$D^B$, consider the auxiliary covariant derivative
$$
D' = \nabla^g \oplus d^\theta \oplus \nabla^{g^*}
$$
on $E$, compatible with $V_+$. For $e = X + r + \xi$, define
\begin{equation*}
  \begin{split}
    \chi'_e &= - \langle \cdot,\cdot\rangle^{-1}T_{D'} = \left(
      \begin{array}{ccc}
        0 & 0 & 0\\
        - i_X F & - c^{-1}(c(r,[\cdot,\cdot])) & 0\\
        i_X H - 2c(F,r) & 2c(i_X F,\cdot) & 0
      \end{array}\right) \in \mathfrak{o}(E),
  \end{split}
\end{equation*}
(note that $\chi' = \chi^0$ in the notation of \cite{GF}). We also
define
$$
\chi'_C \in E^* \otimes \mathfrak{o}(E)
$$ 
by $\chi'_{Ce} = \chi'_{C(e)}$, which is explicitly given by
$$
\chi'_{Ce} = \left(
  \begin{array}{ccc}
    0 & 0 & 0\\
    - i_X F & 0 & 0\\
    i_X H  & 2c(i_X F,\cdot) & 0
  \end{array}\right) \in \mathfrak{o}(E).
$$
Then, we have
\begin{equation}\label{eq:bismutchi}
  D^B = D' + (\chi'_C)^{+++} + (\chi')^{---} + (\chi')^{+-+} + (\chi')^{-+-}.
\end{equation}
With the previous formula, a direct calculation using \cite[Lemma
3.5]{GF} leads us to the following expression for the torsion. Let
$$
\pi_Q \colon E \to Q = E/T^* \cong TP/G
$$ 
be the natural projection and denote by $CS(\theta) \in \Omega^3(P)$
the Chern-Simons $3$-form of $\theta$.
\begin{lemma}
  The torsion $T_{D^B}$ is the element of $\Lambda^3V_+^* \oplus
  \Lambda^3V_-^*$ given by
  \begin{equation}\label{eq:bismuttorsion}
    T_{D^B} = \pi_{Q|V_+}^*\(H - CS(\theta)\) + \pi_{|V_-}^*H. 
  \end{equation}
\end{lemma}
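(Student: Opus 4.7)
\textbf{The plan} is to evaluate $T_{D^B}\in\Lambda^3 E^*$ on all sign configurations of triples in $V_+\oplus V_-$ using the explicit formulae \eqref{eq:bismut}, \eqref{eq:bismutexp} and the Dorfman bracket \eqref{eq:bracket}. The argument divides into three steps: show that the mixed components of $T_{D^B}$ vanish, then compute the pure $V_-$ and pure $V_+$ components.

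\textbf{Vanishing of mixed components.} For $a_+,c_+\in V_+$ and $b_-\in V_-$, the Gualtieri formula \eqref{eq:bismut} gives $D^B_{a_+}b_-=[a_+,b_-]_-$ and $D^B_{b_-}a_+=[b_-,a_+]_+$. Combined with the $\pm$-compatibility of $D^B$, this reduces the torsion to
$$T_{D^B}(a_+,b_-,c_+)= -\la[b_-,a_+]_+ + [[a_+,b_-]]^{++},c_+\ra.$$
Since $V_+\perp V_-$, axiom (D5) yields $[a_+,b_-]+[b_-,a_+]=2\pi^*d\la a_+,b_-\ra=0$, whence $[[a_+,b_-]]^{++}=-[b_-,a_+]_+$ and the right-hand side vanishes. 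An analogous identity gives $T_{D^B}(a_+,b_-,d_-)=0$, so $T_{D^B}\in\Lambda^3 V_+^*\oplus\Lambda^3 V_-^*$.

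\textbf{Pure $V_-$ component.} By \eqref{eq:bismutexp}, $D^B|_{V_-}$ corresponds, via $\pi\colon V_-\to T$, to the metric connection $\nabla^-$, whose skew torsion on $T$ is $-H$. Expanding the $V_-$-projection of the Dorfman bracket $[b_-,d_-]$ from \eqref{eq:bracket} and applying Cartan's formula to the $\cL$- and $d$-terms, all $g$-dependent $T^*$-contributions combine with $[Y,W]$ into $\nabla^g_YW-\nabla^g_WY$, while the $i_Wi_YH$-term absorbs the sign flip between $-H$ and the $+H$ picked up via $D^B_{b_-}d_-=2\Pi_-\nabla^-_YW$, producing $T_{D^B}(b_-,d_-,\tilde c_-)=H(Y,W,\tilde Z)=\pi_{|V_-}^*H(b_-,d_-,\tilde c_-)$.

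\textbf{Pure $V_+$ component.} This is the main obstacle, as the result mixes the Chern--Simons $3$-form of $\theta$ with $H$. The cleanest path goes through the splitting \eqref{eq:bismutchi},
$$D^B = D' + (\chi'_C)^{+++} + (\chi')^{---} + (\chi')^{+-+} + (\chi')^{-+-},$$
together with \cite[Lemma 3.5]{GF}, which converts $\chi$-modifications into explicit algebraic contributions to the torsion. The torsion of $D'=\nabla^g\oplus d^\theta\oplus\nabla^{g^*}$ evaluated on $V_+^{\otimes 3}$-triples, using \eqref{eq:bracket}, yields the $H$-part from $i_Wi_YH$ together with the principal-bundle pieces $-F(X,Z)$, $-[r,t]$ and $d^\theta_X t - d^\theta_Z r$. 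The $(\chi'_C)^{+++}$ correction, from the explicit form of $\chi'_C$, produces the remaining $c(d^\theta\theta\wedge\theta)$ and $\tfrac{1}{3}c([\theta,\theta]\wedge\theta)$ terms. Assembling these and interpreting $H-CS(\theta)$ as a $G$-invariant $3$-form on $P$, hence as a section of $\Lambda^3 Q^*$ via the $\theta$-splitting $Q\cong T\oplus\ad P$, gives $T_{D^B}|_{V_+^{\otimes 3}}=\pi_{Q|V_+}^*(H-CS(\theta))$, completing the proof.
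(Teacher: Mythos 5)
Your proposal is correct and takes essentially the same route as the paper: the paper's proof is precisely the ``direct calculation'' from the decomposition \eqref{eq:bismutchi} together with \cite[Lemma 3.5]{GF}, which is the engine of your pure $V_+$ step. Your Steps 1 and 2 (vanishing of the mixed components via (D5) and orthogonality, and the pure $V_-$ component via $\nabla^-$) simply make explicit the routine parts of that same computation, and the signs and reductions you record there are accurate.
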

More explicitly, taking $a_+,c_+$ as in \eqref{eq:abcd} and $b_+ = Y +
s + gY$, we obtain the formula
\begin{align*}
  T^+_{D^B}(a_+,b_+,c_+) & {} = H(X,Y,Z) + c(r,[s,t])\\
  & \phantom{ {} =} - c(F(X,Y),t) + c(F(X,Z),s) - c(F(Y,Z),r),
\end{align*}
for $T_{D^B} = T_{D^B}^+ + T_{D^B}^-$ the natural
decomposition. Similarly, setting $a_- = X - gX$ we have
$$
T^-_{D^B}(a_-,b_-,d_-) = H(X,Y,W).
$$
We calculate now an explicit formula for the Levi-Civita connection of
$V_+$. We have
$$
\langle \cdot,\cdot\rangle^{-1} T_{D^B} = (\chi^B)^{+++} +
(\chi^B)^{---},
$$
where
\begin{equation*}
  \begin{split}
    \chi^B_e & = \left(
      \begin{array}{ccc}
        0 & 0 & 0\\
        - i_X F & c^{-1}(c(r,[\cdot,\cdot])) & 0\\
        2 i_X H - 2c(F,r) & 2c(i_X F,\cdot) & 0
      \end{array}\right) \in \mathfrak{o}(E).
  \end{split}
\end{equation*}
Therefore
\begin{equation}\label{eq:LeviCivitachi}
  D^{LC} {} = D^B - \frac{1}{3} (\chi^B)^{+++} - \frac{1}{3} (\chi^B)^{---}
\end{equation}
and we conclude that
\begin{equation}\label{eq:Levi-Civitaexp}
  \begin{split}
    D^{LC}_{a_+} c_+ & {} = 2\Pi_+\(\nabla^{1/3}_XZ + \frac{2}{3}g^{-1}c(i_XF,t) + \frac{1}{3}g^{-1}c(i_ZF,r)\)\\
    & \phantom{ {} = }+ d^\theta_X t - \frac{2}{3}F(X,Z) - \frac{1}{3}c^{-1}c(r,[t,\cdot]),\\
    D^{LC}_{b_-} c_+ &= 2\Pi_+\(\nabla^+_YZ + g^{-1}c(i_YF,t)\) + d^\theta_Y t - F(Y,Z),\\
    D^{LC}_{a_+} b_- &= 2\Pi_-\(\nabla^-_XY + g^{-1}c(i_YF,r)\),\\
    D^{LC}_{b_-} d_- &= 2\Pi_-\(\nabla^{-1/3}_YW\).
  \end{split}
\end{equation}
\begin{remark}\label{rem:D0}
  The connection $D^{LC}$ differs by the torsion-free connection $D^0$
  constructed in \cite{GF} by the term $\frac{1}{3} (\chi'')^{+++}$,
  where
  \begin{equation}\label{eq:errorterm}
    \begin{split}
      \chi''_e & = \left(
        \begin{array}{ccc}
          0 & 0 & 0\\
          - i_X F & 0 & 0\\
          4c(F,r) & 2c(i_X F,\cdot) & 0
        \end{array}\right) \in \mathfrak{o}(E)
    \end{split}
  \end{equation}
  (that vanishes identically in the exact case), as it follows from
  \begin{equation}
    \begin{split}
      D^{LC} = {} &  D' + \frac{1}{3} (\chi')^{+++} + \frac{1}{3} (\chi'')^{+++}\\
      & + \frac{1}{3} (\chi')^{---} + (\chi')^{+-+} + (\chi')^{-+-}.
    \end{split}
  \end{equation}
  This provides a new example of a pair of different torsion-free
  connections on $E$ compatible with the same metric
  (cf. \cite{CSW}). We note that the connection $D^0$ is not natural
  for the action of $\Aut E$ (this disproves a claim in \cite[Remark
  3.8]{GF}). For example, for $(B,a) \in \Aut E$ we have
  \begin{align*}
    \chi'' ((B,a) \cdot V_+)  = {} & \chi''(V_+)\\
    (B,a)^{-1} \(\chi''_{(B,a) e}(V_+)\)(B,a)e'  = {} & \(\chi''_{e}(V_+)\)e' - 2c(a,F(X,Y))\\
    & + 4c(i_YF,a(X)) + 2c(i_XF,a(Y))
  \end{align*}
  for $e = X + r + \xi$ and $e' = Y + t + \eta$.
\end{remark}

Finally, we provide an explicit formula for the connection
\eqref{eq:connectionvarphi}. In this work, we are interested in the
case that $\varphi$ is an exact $1$-form, so we assume $\varphi \in
\Omega^1(M)$. Then, $D^\varphi$ satisfies
\begin{equation}\label{eq:Dvarphi}
  \begin{split}
    D^\varphi_{a_+} c_+ &= D^{LC}_{a_+}c_+ + \frac{(\rk V_+ -1)^{-1}}{3}\Pi_+\(\varphi(Z)a_+ - 2(g(X,Z) + c(r,t))\varphi\),\\
    D^\varphi_{b_-} c_+ &= D^{LC}_{b_-} c_+,\\
    D^\varphi_{a_+} b_- &= D^{LC}_{a_+} b_-,\\
    D^\varphi_{b_-} d_- &= D^{LC}_{b_-} d_- + \frac{(\rk V_-
      -1)^{-1}}{3}\Pi_-\(\varphi(W)b_- + 2g(Y,W)\varphi\).
  \end{split}
\end{equation}

\subsection{The Killing spinor equations}

We introduce now the notion of Killing spinor of our interest and
study its relation with classical geometry.  We will restrict
ourselves to the case when $M$ is a six dimensional spin manifold and
$E$ is transitive or exact.

As in the previous section, let $E$ be a transitive Courant algebroid,
obtained from reduction.  Consider an admissible generalized metric
$V_+\subset E$ such that the corresponding metric on $M$ (see
\eqref{eq:V+b0}) is positive definite. Then, there exists a
positive-definite metric $g$ on $M$ such that
\begin{equation}
  \begin{split}
    V_+ & = \{X + g(X) + r \colon X \in T, r \in \ad P\},\\
    V_- & = \{X - g(X) \colon X \in T\},
  \end{split}
\end{equation}
and $\pi_{|V_-} \colon V_- \to (T,g)$ induces an anti-isometry. The
spin condition $w_2(T) = 0$ for the manifold $M$ implies the existence
of a spinor bundle $S(V_-)$. As $\rk V_-$ is even, we have a direct
sum decomposition into positive and negative chirality half-spinor
bundles
$$
S(V_-) = S_+(V_-) \oplus S_-(V_-) \subset Cl(V_-).
$$
Here, $Cl(V_-)$ is the Clifford bundle for $V_-$, with fibre
$Cl((V_{-|x})^*)$ for any $x \in X$.

Given $\phi \in C^\infty(M)$, consider the generalized connection
$D^\phi = D^\varphi(V_+)$ determined by $V_+$ and the $1$-form
$$
\varphi = 6d\phi.
$$
By compatibility, $D^\phi$ induces differential operators
$$
D^\phi_{\pm}: V_-\to V_-\ot (V_\pm)^*,
$$
where we omit, here and below, the symbol of sections for the sake of
simplicity. From $D^\phi_+$ and $D^\phi_-$ we get differential
operators on spinors
$$
D^\phi_\pm:S_+(V_-)\to S_+(V_-)\ot (V_\pm)^*
$$
and the associated Dirac operator
$$
\slashed D^\phi_-:S_+(V_-)\to S_-(V_-).
$$

\begin{definition}
  Given a generalized metric $V_+$, as before, and $\phi \in
  C^{\infty}(M)$, the \emph{Killing spinor equations} for a spinor
  $\eta \in S_+(V_-)$ are given by
  \begin{equation}\label{eq:Killing2}
    \begin{split}
      D^\phi_+ \eta &= 0,\\
      \slashed D_-^\phi \eta & = 0.
    \end{split}
  \end{equation}
\end{definition}

\begin{proposition}\label{prop:naturality}
  The system \eqref{eq:Killing2} is a natural system of equations in
  generalized geometry, that is, solutions are exchanged under
  generalized diffeomorphisms.
\end{proposition}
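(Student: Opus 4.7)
The plan is to show that every ingredient entering the Killing spinor equations \eqref{eq:Killing2}---the decomposition $E = V_+ \oplus V_-$, the spinor bundle $S(V_-)$ with its chirality splitting, the generalized connection $D^\phi$, and the Dirac operator $\slashed D_-^\phi$---transforms covariantly under $\Aut E$, so that the whole system is $\Aut E$-equivariant. Let $f \in \Aut E$ cover $\check f \in \Diff(M)$, and suppose $\eta \in S_+(V_-)$ solves \eqref{eq:Killing2} for $(V_+, \phi)$. We will exhibit an induced solution $\eta' \in S_+(f(V_-))$ of the Killing spinor equations for the pair $(f(V_+), (\check f^{-1})^*\phi)$.

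First, since $f$ preserves the pairing on $E$, the admissible generalized metric $V_+$ is carried to $f(V_+)$, and its orthogonal complement to $f(V_-)$. By the naturality of the canonical Levi-Civita connection, $f_* \circ D^{LC}(V_+) = D^{LC}(f(V_+)) \circ f_*$. The only point requiring attention is the Weyl term: for a $1$-form $\varphi = 6\, d\phi$ we need $f^*\varphi$ to coincide with $6\, d((\check f)^*\phi)$, which is immediate since $\phi$ is a function and $f$ covers $\check f$. Hence, using the identity $f_*(D^{f^*\varphi}(V_+)) = D^\varphi(f(V_+))$ recorded after \eqref{eq:connectionvarphi}, we obtain
\begin{equation*}
f_* \circ D^{(\check f^{-1})^*\phi}(f(V_+)) = D^{\phi}(V_+) \circ f_*,
\end{equation*}
so the $\phi$-twisted connection transforms equivariantly.

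Next, because $\pi_{|V_-}\colon V_- \to (T,g)$ and $\pi_{|f(V_-)}\colon f(V_-) \to (T, \check f_* g)$ are anti-isometries intertwined by $\check f$, the map $f_{|V_-}$ is an orthogonal bundle isomorphism covering $\check f$. Since $M$ is spin, this lifts (uniquely up to a sign, globally because we are on the identity component of the gauge groupoid) to an isomorphism of the spinor bundles
\begin{equation*}
\tilde f \colon S_+(V_-) \longrightarrow S_+(f(V_-))
\end{equation*}
preserving chirality and intertwining Clifford multiplication: $\tilde f(e\cdot \eta) = f(e)\cdot \tilde f(\eta)$ for $e \in V_-$. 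Combined with the equivariance of $D^\phi$ on the $V_\pm$-components established above, this immediately yields
\begin{equation*}
D^{(\check f^{-1})^*\phi}_{\pm} \tilde f(\eta) = \tilde f(D^{\phi}_{\pm} \eta), \qquad \slashed D_-^{(\check f^{-1})^*\phi} \tilde f(\eta) = \tilde f(\slashed D_-^{\phi} \eta),
\end{equation*}
where the Dirac operator is defined via the trace of $D^\phi_-$ using Clifford multiplication by $V_-$ on itself.

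Therefore, if $\eta$ solves \eqref{eq:Killing2} for $(V_+,\phi)$, then $\eta' := \tilde f(\eta)$ solves \eqref{eq:Killing2} for the pair $(f(V_+),(\check f^{-1})^*\phi)$, proving the claim. The only real step requiring verification is the naturality of $D^{\phi}$, since the naturality of spinor bundles, Clifford multiplication, and the Dirac operator with respect to orthogonal bundle maps is standard; the key point---which is built into the definitions of Section 5.1---is that both $D^{LC}$ and the conformal Weyl correction $\chi^\varphi$ are manifestly natural expressions in $V_+$ and $\varphi$, and $\varphi = 6\,d\phi$ transforms as a $1$-form under $\Aut E$.
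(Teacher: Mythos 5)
Your proposal is correct and follows essentially the same route as the paper: the paper's proof is a one-line appeal to the naturality of the Bismut connection $D^B$, the torsion $T_{D^B}$ and the Weyl term $\chi^\varphi$, which is precisely the content you verify (your explicit check that $\varphi = 6\,d\phi$ pulls back correctly and your use of the identity $f_*(D^{f^*\varphi}(V_+)) = D^{\varphi}(f(V_+))$ recorded after \eqref{eq:connectionvarphi}). Your additional discussion of the lift of $f_{|V_-}$ to the spinor bundle makes explicit a point the paper leaves implicit in writing $f_*\eta$, but it does not change the argument.
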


\begin{proof}
  Given a triple $(V_+,\phi,\eta)$ which satisfies \eqref{eq:Killing2}
  and $f \in \Aut E$, we have to check that the triple $(f(V_+),\check
  f_*\phi,f_* \eta)$ is also a solution of the equations. Here,
  $\check f$ is the diffeomorphism on $M$ covered by $f$ (see
  \eqref{eq:AutEmathring}). This follows from the naturality of the
  Bismut connection $D^B$, the torsion $T_{D^B}$ and the Weyl term
  $\chi^\varphi$.
\end{proof}

In the next result we provide a more explicit characterization of the
Killing spinor equations, that will be applied in Section
\ref{sec:charStrom} to the Strominger system. Recall that given a
generalized metric $V_+$, by Proposition \ref{prop:admissmetric2} we
obtain a corresponding triple $(g,H,\theta)$.

\begin{lemma}\label{lem:charStrom}
  The Killing spinor equations \eqref{eq:Killing2} are equivalent to
  the system
  \begin{equation}\label{eq:Killingclassicaltr}
    \begin{split}
      F \cdot \eta &= 0\\
      \nabla^- \eta &= 0,\\
      (H + 2 d\phi)\cdot \eta & = 0,\\
      dH - c(F \wedge F) & = 0,
    \end{split}
  \end{equation}
\end{lemma}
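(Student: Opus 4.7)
The plan is to unwind the explicit formulas \eqref{eq:Dvarphi} and \eqref{eq:Levi-Civitaexp} for $D^\varphi$ with $\varphi=6\,d\phi$, read off the components of $D^\varphi$ that differentiate $V_-$ in the $V_\pm$ directions, and lift the resulting $\mathfrak{o}(V_-)$-valued connection forms to $S_+(V_-)$ by the standard spinor calculus. The anti-isometry $\pi|_{V_-}\colon V_-\to T$ identifies $S_+(V_-)$ with a half-spinor bundle of $(M,g)$, under which Clifford multiplication by elements of $V_-$ corresponds to that by tangent vectors, up to a sign depending on convention.

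For $D^\phi_+\eta=0$, formula \eqref{eq:Levi-Civitaexp} gives the $V_-$-valued component of $D^{LC}_{a_+}$, with $a_+=X+r+gX$, as $2\Pi_-(\nabla^-_XY+g^{-1}c(i_YF,r))$, and the Weyl correction in \eqref{eq:Dvarphi} contributes nothing on this piece. Separating the tangent and adjoint parts of $a_+$ and lifting the resulting $\mathfrak{so}(T)$-valued tensors to spinors, the $X$-contribution reproduces the spinor connection $\nabla^-_X$, while the $r$-contribution is a zero-order endomorphism whose spinor lift is proportional to Clifford multiplication by the $2$-form $c(F,r)\in\Omega^2$. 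Demanding $D^\phi_+\eta=0$ for every $(X,r)$ yields simultaneously $\nabla^-\eta=0$ and $c(F,r)\cdot\eta=0$ for all $r\in\ad P$; by non-degeneracy of the pairing $c$, the latter is equivalent to $F\cdot\eta=0$.

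For the Dirac equation $\slashed D^\phi_-\eta=0$, I would combine the two pieces of $D^\phi_-|_{V_-}$: $\nabla^{-1/3}$ coming from $D^{LC}$ and the Weyl correction $\chi^\varphi$. A standard Clifford computation shows that the Dirac operator of $\nabla^{-1/3}$ equals $\slashed D^g - \tfrac14 H\cdot$ on spinors, while the Dirac trace of the spinor lift of $\chi^\varphi$ is a scalar multiple of $\varphi\cdot$; the prefactor $\tfrac{1}{3(\mathrm{rk}\,V_- - 1)}=\tfrac{1}{15}$ in \eqref{eq:connectionvarphi}, together with the normalisation $\varphi=6\,d\phi$ and the Clifford contraction in dimension six, is calibrated so that this multiple amounts to $d\phi\cdot$. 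Contracting the identity $\nabla^-\eta=0$ with $\sum_i e_i\cdot$ expresses $\slashed D^g\eta$ in terms of $H\cdot\eta$, and substituting reduces the Dirac equation to $\tfrac12(H+2\,d\phi)\cdot\eta=0$, which is the third equation of \eqref{eq:Killingclassicaltr}.

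Finally, $dH-c(F\wedge F)=0$ is not a spinorial consequence but is built into the setting: the Jacobi axiom (D1) for the bracket \eqref{eq:bracket} on $E$ is equivalent to \eqref{eq:bianchisimpl}, so the last equation of \eqref{eq:Killingclassicaltr} holds automatically. The converse implication is obtained by reversing each of the steps above, using naturality of the spinor lift. The main obstacle will be the precise tracking of numerical constants across the Clifford lift, in particular the interplay between the anti-isometric identification $V_-\cong T$ (which flips the sign of the Clifford relations), the normalisations of $\nabla^{\pm 1/3}$ and $\varphi=6\,d\phi$, and the factor $\tfrac{1}{15}$ from \eqref{eq:connectionvarphi}, all of which must conspire to yield exactly the coefficients appearing in \eqref{eq:Killingclassicaltr}.
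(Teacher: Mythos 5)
Your proposal is correct and follows essentially the same route as the paper: identify $S(V_-)$ with $S(T)$ via the anchor, unwind \eqref{eq:Levi-Civitaexp} and \eqref{eq:Dvarphi} to get $D^\phi_+\eta=\nabla^-\eta - c(F\cdot\eta,\cdot)$ (splitting the $T$- and $\ad P$-directions of $V_+$), reduce the Dirac equation to $(H+2d\phi)\cdot\eta=0$ using $\nabla^-\eta=0$, and observe that the Bianchi identity is forced by the Courant algebroid structure via Proposition \ref{prop:admissmetric2}. The only caveats are the numerical and sign normalizations you already flag (e.g.\ the coefficient of $H$ in the Dirac operator of $\nabla^{-1/3}$), which affect the overall constant in front of $(H+2d\phi)\cdot\eta$ but not the equivalence.
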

\begin{proof}
  We use the anchor map $\pi_{|V_-} \colon V_- \to (T,g)$ to identify
  $S(V_-)$ with $S(T)$, so that $F,H$ and $d\phi$, as sections of
  $\wedge^\bullet T^* \subset Cl(T)=\End(S(T))$, act on the
  spinor~$\eta$.

  Let $\{e_1,\ldots,e_n\}$ be a local orthonormal frame for $T$. We use the following convention for the Clifford algebra relations
$$
e_ie_j + e_je_i = 2\delta_{ij}.  
$$ 
An endomorphism $A\in \End(T)$ satisfies
$$ A = \sum_{i,j=1}^n g(Ae_i,e_j)e^i\ot e_j,$$
for $\{e^j\}$ the dual frame of $\{e_j\}$. Since $e^i\ot e_j - e^j\ot
e_i\in \fso(T)$ embeds as $\frac{1}{2}e^j e^i$ in the Clifford
algebra, an endomorphism $A\in \fso(T)$ corresponds to
\begin{equation}\label{eq:A-as-element-in-Cl}
  A=\frac{1}{2}\sum_{i < j} g(Ae_i,e_j) e^j e^i \in \Cl(T).
\end{equation}
Using \eqref{eq:LeviCivitachi} combined with \eqref{eq:Levi-Civitaexp}
and \eqref{eq:Dvarphi}, we have
\begin{align*}
  D^\phi_+ \eta & = \nabla^g \eta  + \frac{1}{2}\sum_{i < j} g((\chi')^{-+-}e_i,e_j) e^j e^i \cdot \eta,\\
  & = \nabla^g \eta - \frac{1}{2}\sum_{i < j} H(e_i,e_j,\cdot) e^j e^i \cdot \eta + \sum_{i < j} c(F(e_i,e_j),\cdot) e^j e^i \cdot \eta\\
  & = \nabla^- \eta - c(F \cdot \eta,\cdot),
\end{align*}
and therefore the vanishing of $D^\phi_+ \eta$ is equivalent to the
first two equations in \eqref{eq:Killingclassicaltr}.  Similarly,
\begin{align*}
  D^\phi_- \eta &{} =  \nabla^- \eta + \frac{1}{3}\sum_{i < j} H(e_i,e_j,\cdot) e^j e^i \cdot \eta \\
  & \phantom{ {}= }+ \frac{1}{5}\sum_{i < j} \(d\phi(e_i) e^j e^i \eta
  \ot e^j - d\phi(e_j)e^j e^i \eta \ot e^i\)
\end{align*}
and hence, assuming that $D^\phi_+ \eta = 0$, we have
\begin{align*}
  \slashed D^\phi_- \eta & = \frac{1}{3}\sum_{i < j} H(e_i,e_j,e_k) e^k e^j e^i \cdot \eta - 2 d\phi \cdot \eta \\
  & = - (H + 2 d\phi)\cdot \eta.
\end{align*}
The last equation follows from the compatibility condition between $H$
and $\theta$ for any admissible metric (see Proposition
\ref{prop:admissmetric2}).
\end{proof}

\subsection{Hull-Strominger's theorem}

We study next the consequences of the existence of a Killing spinor \eqref{eq:Killing2} in terms of complex hermitian
geometry. The link 
is provided by Lemma \ref{lem:charStrom} combined with the following
result, originally due to Hull and Strominger \cite{HullTurin,Strom}, which constitutes
the starting point for the theory of the Strominger system. We give
here a different proof, using Gauduchon's formula for the Bismut
connection on the canonical bundle \cite{Gau}.

\begin{theorem}[\cite{Strom}]\label{thm:Strom}
  Given a real $3$-form $H \in \Omega^3$ and a real smooth function
  $\phi \in C^\infty(M)$, a solution $(g,\eta)$ of the system
  \begin{equation}\label{eq:thmStrom1}
    \begin{split}
      \nabla^- \eta &= 0,\\
      (H + 2 d\phi) \cdot \eta & = 0,
    \end{split}
  \end{equation}
  for a non-vanishing half-spinor $\eta \in S_+$ is equivalent to an
  $\SU(3)$-structure $(\omega,\Omega)$ on $M$ with integrable almost
  complex structure $J$, K\"ahler form $\omega = g(J\cdot,\cdot)$ and
  holomorphic volume form $\Omega$, satisfying
  \begin{equation}\label{eq:thmStrom2}
    \begin{split}
      H &= d^c\omega,\\
      \phi & = -\frac{1}{2}\log \|\Omega\|_\omega - \kappa,\\
      d^*\omega & = d^c \log \|\Omega\|_\omega,
    \end{split}
  \end{equation}
  for a suitable real constant $\kappa$.
\end{theorem}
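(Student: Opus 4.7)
The plan is to prove the theorem by exploiting the spinorial characterization of $\SU(3)$-structures on a six-dimensional spin manifold. Recall that $\Spin(6)\cong\SU(4)$ acts on $S_+\cong\mathbb{C}^4$ with stabilizer of any unit vector equal to $\SU(3)$. Thus a non-vanishing half-spinor $\eta$, paired with the metric $g$, canonically determines an $\SU(3)$-structure on $M$: one obtains an almost complex structure $J$, fundamental form $\omega=g(J\cdot,\cdot)$ and complex volume form $\Omega$ by taking appropriate bilinears of $\eta$ with itself. Explicitly, $\omega$ corresponds to the endomorphism $\tfrac{i}{2}\sum e^i\wedge e^j\,\langle e_i e_j\eta,\eta\rangle$ and $\Omega$ to the analogous $(3,0)$-component, with normalization chosen so that $\|\Omega\|_\omega=1$ on the nose. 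In this setup the Clifford identities $\omega\cdot\eta = 3i\,\eta$ and $\Omega\cdot\eta=0$ hold, which will be the algebraic inputs for the decomposition step.

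Next I would analyze the first equation $\nabla^-\eta=0$. Since $\eta$ is a stabilizer for $\SU(3)$, parallelism implies $\nabla^-$-holonomy reduces to $\SU(3)$; in particular $\nabla^-\omega=0$, $\nabla^- J=0$ and $\nabla^-\Omega=0$. The connection $\nabla^-=\nabla^g-\tfrac{1}{2}g^{-1}H$ has totally skew-symmetric torsion $-H$. By the Friedrich--Ivanov characterization of almost Hermitian manifolds admitting a compatible connection with totally skew torsion, the existence of such a connection forces the Nijenhuis tensor of $J$ to be totally skew, and when it additionally preserves $\omega$ and $J$, $J$ must be integrable and the connection must coincide with the Bismut connection $\nabla^+=\nabla^g+\tfrac{1}{2}g^{-1}d^c\omega$ of $(g,J)$. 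Since $\nabla^-$ and $\nabla^+$ have opposite torsion, this pins down $H=d^c\omega$, giving the first identity in \eqref{eq:thmStrom2} and the complex-geometric structure.

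The subtler step is extracting the conformally balanced equation and the dilaton relation from the algebraic condition $(H+2d\phi)\cdot\eta=0$. The idea is to decompose any real $3$-form $H$ on the six-manifold into $\SU(3)$-irreducible components
\[
H=H^{(3,0)+(0,3)}+H^{(2,1)+(1,2)}_0+\theta\wedge\omega,
\]
where $H^{(2,1)+(1,2)}_0$ is primitive and $\theta\in\Omega^1$. Using $\omega\cdot\eta=3i\eta$ and $\Omega\cdot\eta=0$, Clifford multiplication by $H$ on $\eta$ projects nontrivially only through $\theta\wedge\omega$, while Clifford multiplication by $2d\phi$ contributes through the $1$-form action on $S_+$. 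A direct computation in the Clifford algebra shows that $(H+2d\phi)\cdot\eta=0$ is equivalent to $H^{(3,0)+(0,3)}=0$, $H^{(2,1)+(1,2)}_0=0$, and a linear relation of the form $\theta=-2J(d\phi)$ (or equivalent, up to conventions fixed by our normalization of $\eta$). Combining with $H=d^c\omega$ and the well-known identity $d(\omega^2)=-2\theta_\omega\wedge\omega^2$ with Lee form $\theta_\omega=Jd^*\omega$, we obtain $d^*\omega=2d^c\phi$.

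To close, I would invoke Gauduchon's formula \cite{Gau} expressing the difference between the Bismut and Chern connections on the canonical bundle $K=\Lambda^{3,0}T^*$: Bismut minus Chern equals $d^c\log\|\Omega\|_\omega$ tensored with the identity on $K$. Since $\nabla^-\Omega=0$ while the Chern connection kills $\Omega/\|\Omega\|_\omega$, one reads off that $\nabla^+$-parallelism of $\Omega$ forces $d^*\omega=d^c\log\|\Omega\|_\omega$, hence $\phi=-\tfrac{1}{2}\log\|\Omega\|_\omega-\kappa$ for a constant $\kappa$, as required. The converse direction is immediate by retracing these steps: from $(\omega,\Omega)$ reconstruct $\eta$ as the defining spinor of the $\SU(3)$-structure, define $H=d^c\omega$, and verify both Killing equations using the same Clifford identities. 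The main obstacle is the third step: carefully matching the $\SU(3)$-decomposition of the Clifford action with the precise Gauduchon normalization, where factors of $\tfrac{1}{2}$ and signs must be tracked consistently.
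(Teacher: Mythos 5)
Your overall strategy is the same as the paper's (a spinorial model for $S_+$, decomposition of the Clifford action of the $3$-form on the canonical spinor, and Gauduchon's formula for the canonical bundle), but two of your central claims are false, and each one breaks the argument.

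First, integrability of $J$ does not follow from $\nabla^-\eta=0$ alone. The Friedrich--Ivanov theorem only asserts that if an almost Hermitian structure admits a compatible connection with totally skew torsion then the Nijenhuis tensor $N$ is totally skew and the connection is unique, with torsion determined by $N$ and $(d^c\omega)^{(2,1)+(1,2)}$; it does \emph{not} force $N=0$. Nearly K\"ahler six-manifolds carry exactly such a connection together with a parallel spinor, and $J$ is non-integrable there. In the paper integrability comes from the \emph{second} Killing equation: $(H+2d\phi)\cdot\eta=0$ forces $H^{0,3}=0$, and then Gauduchon's identity $H=-N+(d^c\omega)^{2,1+1,2}$, with $N$ of type $(3,0)+(0,3)$, yields $N=0$ and $H=d^c\omega$. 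Your proposal extracts $H=d^c\omega$ before ever using the second equation, which cannot work.

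Second, your analysis of $(H+2d\phi)\cdot\eta=0$ is wrong: this equation does not force the primitive $(2,1)+(1,2)$ part of $H$ to vanish. In the model $S_+\cong\Lambda^{0,\mathrm{even}}$ with $\eta=1$, a primitive $(1,2)$-form annihilates $1$ (the Clifford action of $d\overline{z}_i\wedge d\overline{z}_j\wedge dz_k$ on $1$ only sees the trace over $k$), so the equation is equivalent to $H^{0,3}=0$ together with $i\Lambda_\omega H^{1,2}=-2\dbar\phi$, and nothing more. A dimension count already rules out your claim: the equation imposes at most $8$ real conditions, while killing $H^{(3,0)+(0,3)}$ and the primitive part would impose $14$. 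Were your version correct, $H=d^c\omega$ would be forced into the form $\theta\wedge\omega$, which excludes essentially every non-K\"ahler solution of the Strominger system and contradicts the statement being proved. (Your own text is internally inconsistent here: you first say $H$ acts on $\eta$ only through $\theta\wedge\omega$, then claim the equation kills the primitive part.) Finally, you conflate the unit-norm $\nabla^-$-parallel $3$-form (the paper's $\psi$) with the holomorphic volume form: normalizing $\|\Omega\|_\omega=1$ would give $d^*\omega=0$ rather than the conformally balanced equation. The holomorphic form is $\Omega=e^{-2\phi}\psi$, obtained from the Lee-form identity $\Lambda_\omega d\omega=2d\phi$ together with Gauduchon's formula relating $\nabla^-$ and the Chern connection on the canonical bundle.
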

\begin{proof}
  We start with a solution $(g,\eta)$ of \eqref{eq:thmStrom1}, and
  note that the first equation reduces the holonomy of $\nabla^-$ to
  $\SU(3)$. This follows from the equality $Spin(6) = \SU(4)$ (see also
  \cite[Lem. 9.15]{LaMi} and \cite[Rem. 9.12]{LaMi}). Let
  $(\psi,\omega)$ be the corresponding $\SU(3)$-structure on $M$.
  Here $\psi$ is a parallel complex $3$-form which determines an
  orthogonal almost complex structure $J$ via \eqref{eq:T01} and
  $\omega = g(J\cdot,\cdot)$ is the corresponding (parallel) K\"ahler
  form.

Using the $\SU(3)$-structure we have an explicit model for the half-spinor bundle $S_+$: the Clifford module
$$
S_+\cong \Lambda^{0,even}
$$
with Clifford action
$$
\xi \cdot \sigma = \sqrt{2} ( i_{(g^{-1} \xi^{1,0})} \sigma +
\xi^{0,1} \wedge \sigma)
$$
for $\xi$ a 
$1$-form and $\sigma \in \Lambda^{0,even}$. For
$\SU(3)$, the space of even parallel spinors is $1$-dimensional and
$\eta$ is identified with a non-vanishing function (\cite{Wang}),
which we can assume to be $1$.

Choosing a basis $\{dz_j,d\overline{z}_j\}$ of $(1,0)$ and $(0,1)$
forms at a point such that
$$
g = \sum_{j=1}^3 dz_j \otimes d\overline{z}_j + d\overline{z}_j
\otimes dz_j
$$
we have
\begin{align*}
  dz_j \cdot 1 &= 0,\\
  d\overline{z}_j \cdot 1 & = \sqrt{2} d\overline{z}_j,\\
  dz_i \wedge dz_j \wedge d\overline{z}_k \cdot 1 & = 0,\\
  d\overline{z}_i \wedge d\overline{z}_j \wedge dz_k \cdot 1 & =
  \sqrt{2}(\delta_{ki} d\overline{z}_j - \delta_{kj} d\overline{z}_i).
\end{align*}
This implies
$$
(H + 2 d\phi) \cdot 1 = 2 \sqrt{2} \(H^{0,3} + \frac{1}{2}\sum_{i<j}
(H^{1,2}_{\overline{i}\overline{j}i}d\overline{z}_j -
H^{1,2}_{\overline{i}\overline{j}j}d\overline{z}_i) + \dbar\phi\)
$$
where $H = H^{3,0} + H^{2,1} + H^{1,2} + H^{0,3}$ is the decomposition
in types with respect to $J$ and
$$
H^{1,2} = \sum_{i<j}H^{1,2}_{\overline{i}\overline{j}k}
d\overline{z}_i \wedge d\overline{z}_j \wedge dz_k.
$$
Then, it follows that the second equation in \eqref{eq:thmStrom1} is
equivalent to the two conditions
\begin{equation}\label{eq:thmStromproof1}
  \begin{split}
    H^{0,3} & = 0,\\
    i\Lambda_\omega H^{1,2} & = -2 \dbar \phi.
  \end{split}
\end{equation}
Using now that $\nabla^-$ is unitary and has totally skew-torsion
$-H$, by \cite[Eq. (2.5.2)]{Gau},
$$
H = -N + (d^c\om)^{2,1+1,2},
$$
where $N$ denotes the Nijenhuis tensor of $J$, which is of type $(3,0)
+ (0,3)$ \cite[Prop. 1]{Gau}.

Hence, $N = 0$, $H = d^c\omega$ and also
$$
\Lambda_\omega d\omega = 2d\phi,
$$
where, recall, $\Lambda_\omega d\omega = Jd^*\omega$ is the Lee form
of the hermitian structure, and therefore $\omega$ is conformally
balanced. Using now Gauduchon's formula \cite[Eq. (2.7.6)]{Gau}
\begin{equation}\label{eq:nablaC}
  \nabla^C = \nabla^- + i d^*\omega \otimes \Id,
\end{equation}
relating the connections induced by $\nabla^-$ and the Chern
connection $\nabla^C$ on the canonical bundle, combined with $\nabla^-
\psi = 0$, it follows that
$$
\Omega := e^{-2\phi}\psi
$$ 
is a holomorphic volume form for the given complex structure. Finally,
from \eqref{eq:norm3form} we obtain that
\begin{equation}\label{eq:phiSU3}
  \phi = -\frac{1}{2}\(\log \|\Omega\|_\omega - \log \|\psi\|_\omega\)
\end{equation}
where $\|\psi\|_\omega$ is constant and therefore
$$
d^*\omega = d^c \log \|\Omega\|_\omega.
$$
For the converse, we simply define $\eta = 1$ in the model for $S_+$
provided by the $\SU(3)$-structure.
\end{proof}

\subsection{Metrics with holonomy $\SU(3)$ and the Strominger
  system}\label{sec:charStrom}

We want to give a complete characterization of \eqref{eq:Killing2} in
terms of complex geometry using Hull-Strominger's theorem, when $E$ is
either a exact or a transitive Courant algebroid.

We assume first that $E$ is exact.  By definition, $E$ fits into an
exact sequence
$$
0 \to T^* \to E \to T \to 0
$$
and the admissible metric $V_+$ induces a splitting
$$
E = T \oplus T^*.
$$
In this splitting, the generalized metric takes the form
\begin{align*}
  V_+ & = \{X + g(X) \colon X \in T\},\\
  V_- & = \{X - g(X) \colon X \in T\},
\end{align*}
and the induced $3$-form $H$ is closed
$$
dH =0.
$$

\begin{theorem}\label{th:SU3}
  Assume that $E$ is exact. Then $(V_+,\phi,\eta)$ is a solution of
  \eqref{eq:Killing2} with $\eta \neq 0$ if and only if $H =0$, $\phi$
  is constant and $g$ is a metric with holonomy contained in $\SU(3)$.
\end{theorem}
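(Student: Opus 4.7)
The plan is to reduce the problem to Hull--Strominger's theorem by exploiting that in the exact case the transitive Courant-algebroid data trivialize, and then use a result of Ivanov--Papadopoulos to collapse the resulting hermitian structure to a genuine Calabi--Yau.

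Concretely, first I would invoke Lemma~\ref{lem:charStrom} with $\ad P = 0$, so that the curvature $F$ is absent. Then the Killing spinor equations \eqref{eq:Killing2} are equivalent to $\nabla^- \eta = 0$ and $(H + 2 d\phi)\cdot \eta = 0$, the fourth line $dH = c(F\wedge F)$ being automatic since in an exact Courant algebroid $H$ is closed. This puts us squarely in the hypotheses of Theorem~\ref{thm:Strom}, which, thanks to $\eta \neq 0$, yields an $\SU(3)$-structure $(\omega, \Omega)$ on $M$ with integrable almost complex structure $J$, together with the relations
\begin{equation*}
  H = d^c \omega, \qquad \phi = -\tfrac{1}{2} \log \|\Omega\|_\omega - \kappa, \qquad d^*\omega = d^c \log \|\Omega\|_\omega.
\end{equation*}

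The key observation is now that $dH = 0$ translates into $d d^c \omega = 0$, so $\omega$ is \emph{strong K\"ahler with torsion}. Moreover, $\nabla^-$ is the Bismut connection of $\omega$ (see \eqref{eq:nabla+3}), and $\nabla^- \eta = 0$ for $\eta$ non-vanishing forces its holonomy into $\SU(3)$; jointly with the conformally balanced condition above, I would apply \cite[Corollary~4.7]{IvanovPapadopoulos} exactly as in the proposition of Section~\ref{sec:abelianeq} to conclude that $\omega$ is in fact K\"ahler. Hence $d\omega = 0$, which gives $H = d^c \omega = 0$. Combined with the conformally balanced equation $d^*\omega = d^c \log \|\Omega\|_\omega$, applying $J$ shows $d \log \|\Omega\|_\omega = 0$; by compactness this forces $\|\Omega\|_\omega$ to be constant and hence, via the second relation, $\phi$ to be constant as well. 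Now $\nabla^- = \nabla^g$ has a parallel spinor, so $g$ has holonomy contained in $\SU(3)$.

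For the converse, if $g$ has holonomy in $\SU(3)$ with $H = 0$ and $\phi$ constant, the induced parallel $\SU(3)$-structure $(\omega, \Omega)$ provides the model $S_+ \cong \Lambda^{0,\mathrm{even}}$ used in the proof of Theorem~\ref{thm:Strom}, in which the constant spinor $\eta = 1$ satisfies both $\nabla^g \eta = 0$ and $d\phi \cdot \eta = 0$, so \eqref{eq:Killingclassicaltr} holds trivially. The main potential obstacle is the application of Ivanov--Papadopoulos in the step $dd^c\omega = 0 \Rightarrow d\omega = 0$: it is the only nontrivial global input and it is what genuinely uses the compactness of $M$; the rest of the argument is a translation between the generalized-geometric and the classical hermitian formulations.
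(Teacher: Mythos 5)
Your argument is correct, and the reduction steps (Lemma~\ref{lem:charStrom} with $\ad P=0$, i.e.\ Lemma~\ref{lem:SU3}, followed by Theorem~\ref{thm:Strom}) coincide with the paper's. Where you genuinely diverge is in the key ``collapse'' step. The paper deliberately gives a self-contained argument: it combines Gauduchon's formula \eqref{eq:nablaC} relating the Chern and Bismut Ricci forms, the identity of \cite{AlexIvanov}, and the Matsuo--Takahashi formula \cite{MaTa} to derive the pointwise identity \eqref{eq:uom}, namely $-2\Lambda_\omega dd^c\phi = 2|d^c\omega|^2 + 4|d^*\omega|^2 \geq 0$, and then integrates over $M$ to conclude simultaneously that $\omega$ is K\"ahler, $H=0$ and $\phi$ is constant. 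You instead observe that $dH=0$ makes $\omega$ strong K\"ahler with torsion, that $\nabla^-\eta=0$ reduces the Bismut holonomy to $\SU(3)$, and that the dilatino equation gives exactness of the Lee form, and then invoke \cite[Corollary~4.7]{IvanovPapadopoulos} as a black box --- the same citation the paper itself uses in the abelian Proposition of Section~\ref{sec:abelianeq}, so your hypotheses do match that corollary. Both routes are valid; indeed the paper explicitly notes that Theorem~\ref{th:SU3} ``originally appeared in \cite{IvanovPapadopoulos,IvanovPapadopoulosnogo}'' and that its proof is included only for completeness, so your approach recovers the original literature argument while the paper's buys an explicit identity exhibiting where compactness and the sign of the Weitzenb\"ock term enter. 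Your subsequent bookkeeping ($d\omega=0\Rightarrow H=Jd\omega=0$, constancy of $\|\Omega\|_\omega$ and hence of $\phi$, and $\nabla^-=\nabla^g$ parallel spinor) and the converse agree with the paper; the only slip is a harmless label: in the exact case the relevant classical system is \eqref{eq:Killingclassical}, not \eqref{eq:Killingclassicaltr}.
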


The proof will follow from Hull-Strominger's theorem and Lemma
\ref{lem:charStrom}, which in the present setup specifies to the
following result.

\begin{lemma}\label{lem:SU3}
  Assume that $E$ is exact. Then \eqref{eq:Killing2} is equivalent to
  the system
  \begin{equation}\label{eq:Killingclassical}
    \begin{split}
      \nabla^- \eta &= 0,\\
      (H + 2 d\phi)\cdot \eta & = 0,\\
      dH & = 0.
    \end{split}
  \end{equation}
\end{lemma}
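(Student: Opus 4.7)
The plan is to derive Lemma \ref{lem:SU3} as a direct specialization of Lemma \ref{lem:charStrom} to the exact case. Lemma \ref{lem:charStrom} characterizes the Killing spinor equations \eqref{eq:Killing2} on a transitive Courant algebroid obtained from reduction of a principal $G$-bundle $P$ as the four-equation system \eqref{eq:Killingclassicaltr}. When $E$ is exact, the sequence $0\to T^*\to E\to T\to 0$ means that the transitive structure degenerates to the trivial one: the bundle of Lie algebras $\ad P$ vanishes, so the pairing $c$ on $\ad P$ and the curvature $F$ play no role. Concretely, under the splitting $E=T\oplus T^*$ induced by $V_+$, the formulas \eqref{eq:bismutexp}, \eqref{eq:Levi-Civitaexp} and \eqref{eq:Dvarphi} all lose the terms involving $r,t,F$.

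First I would verify that the derivation of Lemma \ref{lem:charStrom} carries through verbatim with these simplifications. The key computations in that lemma—expressing $D^\phi_+\eta$ and $\slashed D^\phi_-\eta$ via \eqref{eq:LeviCivitachi} and \eqref{eq:Dvarphi}—involve $\chi'$, whose only surviving piece in the exact setting is the $H$-dependent entry $i_XH$ in \eqref{eq:A-as-element-in-Cl}. Repeating the Clifford-algebra calculation gives
\[
D^\phi_+\eta=\nabla^-\eta,\qquad \slashed D^\phi_-\eta=-(H+2d\phi)\cdot\eta
\]
under the identification $S(V_-)\cong S(T)$ induced by $\pi_{|V_-}$. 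Thus \eqref{eq:Killing2} is equivalent to the first two equations of \eqref{eq:Killingclassical}.

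Second, I would handle the last equation. In Lemma \ref{lem:charStrom}, the equation $dH-c(F\wedge F)=0$ arises from the compatibility condition between $H$ and $\theta$ built into any admissible generalized metric via Proposition \ref{prop:admissmetric2}—it is automatic rather than an extra imposition. In the exact case, Proposition \ref{prop:admissmetric2} specializes to the classical fact that an admissible metric on an exact Courant algebroid determines a closed $3$-form $H$, since $F=0$ and the Bianchi identity \eqref{eq:bianchisimpl} reduces to $dH=0$. This yields the third equation of \eqref{eq:Killingclassical}.

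Putting both steps together, the four equations of \eqref{eq:Killingclassicaltr} collapse to exactly the three of \eqref{eq:Killingclassical}, with the equation $F\cdot\eta=0$ becoming vacuous. The only mild obstacle is bookkeeping: one must check that the naturality of $D^{LC}$ and $D^\varphi$ with respect to the splitting induced by $V_+$ is compatible with the degenerate limit $\ad P=0$, but this is immediate from the explicit formulas \eqref{eq:Levi-Civitaexp} and \eqref{eq:Dvarphi}, where every term involving $r$, $t$ or $F$ simply vanishes.
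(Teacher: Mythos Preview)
Your proposal is correct and matches the paper's approach exactly: the paper does not give a separate proof of Lemma~\ref{lem:SU3} but simply states that Lemma~\ref{lem:charStrom} ``in the present setup specifies to'' this result, and your argument carries out precisely that specialization (setting $\ad P=0$, $F=0$, so that \eqref{eq:Killingclassicaltr} collapses to \eqref{eq:Killingclassical}). One minor remark: your displayed identity $\slashed D^\phi_-\eta=-(H+2d\phi)\cdot\eta$ holds only after using $\nabla^-\eta=0$, as in the paper's proof of Lemma~\ref{lem:charStrom}; this is harmless for the equivalence statement but worth making explicit.
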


Theorem \ref{th:SU3} is a well-known fact in the literature, that
originally appeared in
\cite{IvanovPapadopoulos,IvanovPapadopoulosnogo}, but we sketch here a
proof with the complete argument for the convenience of the reader.

\begin{proof}[Proof of Theorem \ref{th:SU3}]
  For the `only if' part, we note that from Theorem \ref{thm:Strom}
$$
\Lambda_\omega \rho^C = -2\Lambda_\omega dd^c \phi,
$$
where $\rho^C$ is the Ricci form of the Chern connection
$\nabla^C$. On the other hand, \eqref{eq:nablaC} implies that
\cite[(2.11)]{AlexIvanov}
$$
\Lambda_\omega \rho^C = \Lambda_\omega \rho^B + 4 d^*(Jd^*\omega) + 8
|d^*\omega|^2_\omega,
$$
where $\rho^B$ is the Ricci form of the Bismut connection, and from
\cite[Th. 1.1]{MaTa} (see also \cite[Eq. (18)]{Fino}) we have
$$
\frac{1}{32}\Lambda^2(dd^c \omega) = 2 d^*(Jd^*\omega) + 2
|d^*\omega|^2 - |d^c\omega|^2.
$$
Using now that $dd^c\omega = dH = 0$ and that $\nabla^-$ has holonomy
contained in $\SU(3)$---hence $\rho^B = 0$---we obtain
(cf. \cite{Fino})
\begin{equation}\label{eq:uom}
-2\Lambda_\omega dd^c \phi = 2|d^c\omega|^2 + 4|d^*\omega|^2 \geqslant 0.
\end{equation}
Integrating \eqref{eq:uom} over $M$, we conclude that $\omega$ is
K\"ahler, $H= 0$ and $\phi$ is constant. Therefore, $g$ is a metric
with holonomy contained in $\SU(3)$.

For the converse, we define $\eta = 1 \in \Lambda^{0,even}$ in the
model for $S_+(V_-)$ given by the $\SU(3)$-structure.  Then, since $g$
has holonomy $\SU(3)$ we have that $\nabla^- = \nabla^g$ preserves the
previous isomorphism and hence we obtain a solution of
\eqref{eq:Killingclassical} with $H= 0$ and $\phi$ constant.
\end{proof}

We go now for the transitive case, using the notation introduced in
Section \ref{sec:parameters}.  We make the assumption that the first
Pontryagin class of the principal bundle $P$ with respect to the
non-degenerate pairing on the Lie algebra of the structure group
$\mathfrak{g} = \mathfrak{k} \oplus \mathfrak{gl}(6,\RR)$
$$
c = 2\alpha'(- \tr_\mathfrak{k} - c_{\mathfrak{gl}})
$$
vanishes $p_1(P) = 0$ or, equivalently,
$$
p_1(P_K) = p_1(P_{\GL^+}).
$$
By \cite[Prop. 2.3]{GF}, this condition determines a canonical exact
Courant algebroid
$$
0 \to T^*P \to \hat E \to TP \to 0
$$
endowed with a (lifted) $G$-action and non-degenerate pairing $c$
(such that it admits an equivariant isotropic splitting). The
transitive Courant algebroid of our interest
$$
0 \to T^* \to E \to T \to 0,
$$
is then obtained from reduction \cite[Prop. 2.4]{GF} (alternatively,
one can apply \cite[Th. 1.7]{ChStXu} for a direct construction).

On $E$, we consider admissible metrics $V_+$ such that the metric $g$
on $M$ induced by $V_+$ is positive definite and the connection
$\theta$ on $P$ is a product of a connection $A$ on $P_K$ and a
$g$-compatible connection $\nabla$ on $P_{\GL^+}$.  With this ansatz,
the compatibility between $\theta$ and $H$ given in Proposition
\ref{prop:admissmetric2} reads
\begin{equation}\label{eq:bianchicharst}
  dH = 2\alpha'\(- c_{\mathfrak{gl}}(R\wedge R) -\tr_\mathfrak{k}(F_A\wedge F_A)\).
\end{equation}

We are ready to prove the main result of this section, which states
the equivalence of the Strominger system with the Killing spinor
equations.

\begin{proof}[Proof of Theorem \ref{th:charStrom}]
  Given a solution of \eqref{eq:Killing2}, from Theorem
  \ref{thm:Strom} we obtain a Calabi-Yau threefold structure $\Omega$
  on $M$ and a conformally balanced K\"ahler form $\omega$ with $H=
  d^c \omega$ .  Note that by \eqref{eq:bianchicharst} $H$ is not
  closed, and therefore the last part of the argument in the proof of
  Theorem \ref{th:SU3} does not apply.  Using Lemma
  \ref{lem:charStrom} and the first equation in
  \eqref{eq:Killingclassicaltr}, it follows from \cite{Wang} that
$$
F\wedge \omega^2 = 0, \qquad F \wedge \Omega = 0
$$
and hence both $A$ and $\nabla$ are hermitian-Yang-Mills connections.
Furthermore, 
since $\nabla g = 0$, the inclusion $\mathfrak{so}(6) \subset
\mathfrak{sl}(6,\RR)$ and \eqref{eq:bianchicharst} imply
$$
dd^c \omega = 2\alpha' (\tr R \wedge R - \tr_\mathfrak{k} F_A \wedge
F_A),
$$
by definition of $c_{\mathfrak{gl}}$. For the converse, given a
solution of the Strominger system we define $\theta = A \times
\nabla$, $H= d^c\omega$ and $\phi$ by \eqref{eq:phiSU3}. Then, the
spinor $\eta$ determined by the given $\SU(3)$-structure (see proof of
Theorem \ref{th:SU3}) on $M$ satisfies \eqref{eq:Killingclassicaltr}
and therefore is Killing.

The last part of the statement follows from Proposition
\ref{prop:naturality}.
\end{proof}

Theorem \ref{th:charStrom} shows that the Strominger system provides
natural equations in generalized geometry. As a direct consequence, we
obtain a precise geometric interpretation of the vector spaces
$H^1(\mathring{S}^*)$ and $H^1(\widehat{S}^*)$, as spaces of
infinitesimal deformations of solutions of the Killing spinor
equations \eqref{eq:Killing} modulo infinitesimal symmetries of a
Courant algebroid.

\end{document}